\def\R {\mathbb{R}}
\newtheorem{theorem}{Theorem}%[section]
\newtheorem{definition}{Definition}[section]
\newtheorem{lemma}{Lemma}[section]
\newtheorem{proposition}{Proposition}[section]
\newtheorem{remark}{Remark}[section]
\newtheorem{corollary}{Corollary}[section]
\numberwithin{equation}{section}
\DeclareMathOperator{\supp}{supp}
\newcommand{\abs}[1]{\left|#1\right|}
\newcommand{\norm}[1]{\left\|#1\right\|}
\newcommand{\eps}{\varepsilon}
\newcommand{\dd}{\,\mathrm{d}}
\renewcommand{\d}{\mathrm{d}}
\newcommand{\Ds}{\left(-\Delta\right)^s }
\newcommand{\Dusm}{\left(-\Delta\right)^{\frac{1+s}{2}} }
\renewcommand{\div}{\operatorname{div}}
\begin{document}

\title[$\alpha$-Entropy solutions for Nonlocal Thin films]{On $\alpha$-Entropy Solutions of a Nonlocal Thin Film Equation: Existence and finite speed of propagation}

\pagestyle{myheadings}
\author{Antonio Segatti}
\address{Dipartimento di Matematica ``F. Casorati'', via Ferrata 1, I--27100 Pavia, Italy}
\email{antonio.segatti@unipv.it}
\urladdr{}

\author{Roman Taranets}
\address{Institute of Applied Mathematics and Mechanics
of the NAS of Ukraine, G.Batyuka Str. 19, 84100  Sloviansk, Ukraine}
\email{taranets\_r@yahoo.com}

\begin{abstract}
We consider an initial-boundary value problem for a class of nonlocal thin film equations 
governed by the spectral fractional Laplacian with homogeneous Neumann boundary conditions. 
We were the first to establish an $\alpha$-entropy estimate for nonlocal thin film equations, 
which yields essential a priori bounds for the regularity and long-time behavior of weak solutions. 
By developing a localized version of this estimate,  we prove finite speed of propagation, showing that 
the support of nonnegative solutions remains compact for positive times. Furthermore, we find a sufficient condition for a waiting time phenomenon, 
whereby the solution remains identically zero in a region for a nontrivial time interval. 
These results highlight new features in the interaction between nonlocal effects and classical 
thin film dynamics.
\end{abstract}

\maketitle

\section{Introduction}
\label{sec:intro}

In this paper, we discuss various fine qualitative properties of the solutions of the following initial-boundary value problem:
\begin{equation}
\label{eq:ft}
\begin{cases}
\partial_t u(x,t)= \div(u^n(x,t)\nabla p(x,t)), & (x,t) \in  \Omega \times (0,+\infty),\\
p(x,t)= \Ds u(x,t), &  (x,t) \in \Omega \times (0,+\infty),\\
\nabla u\cdot {\hat{\bf{n}}} = u ^{n}  \nabla p(x,t) \cdot{\hat{\bf{n}}}=0, & (x,t) \in \partial\Omega \times (0,+\infty),\\
u(x,0) =u_0(x), & x \in \Omega,
\end{cases}
\end{equation}
where $\Omega \subset \mathbb{R}^d$ is a bounded domain and ${\hat{\bf{n}}}$ is the outward normal to $\partial\Omega$, $n > 0$, and $\Ds$ is the so-called Neumann (spectral) fractional laplacian, defined, for functions in a proper functional framework, as
\begin{align}\label{eq:operator}
\Ds : u\mapsto 
 \mathop {\sum} \limits_{k =0}^{+\infty} {(u,\phi_k)\lambda_k^{s} \phi_k}.
\end{align}
Here, $\{(\lambda_k, \phi_k)\}_{k \in \mathbb{N}}$ are the Neumann eigenpairs (i.e. eigenvalues and eigenfunctions) of the Laplacian operator, and $(\cdot, \cdot)$ is the inner product in $L^2(\Omega)$ (see \cite{AbatangeloValdinoci,StingaTorrea} for further information on this operator).

The evolution above interpolates between the two well-known {\itshape porous medium equation} (PME) (when, formally, $s=0$) and the
{\itshape thin film equation} (TFE) (when, formally, $s=1$). 
This equation arises in the   modelling  of hydraulic fractures: the parameter $s \in (0,1)$ is related to the properties of the medium in which the fracture spreads, while the exponent
$n > 0$ corresponds to different slip conditions on the   fluid-solid interface. In the thin film literature ($s=1$), the case $n \in (1,2)$ corresponds to ``strong slippage'';  $n \in (2,3)$ to ``weak slippage'';  $n = 3$  to a ``no-slip condition'' \cite{RevModPhys.69.931};  $n =2$ to the ``Navier-slip condition'' \cite{JAGER200196}; the case  $n = 1$  arises as the lubrication approximation of the Hele--Shaw flow \cite{GO}.

The PDE in \eqref{eq:ft} is a nonlocal degenerate parabolic-type equation of order $2(s + 1)$. 
As we will see, the properties of the solutions depend on a precise interplay of the values of $n$ and of $s$.  

The first contribution to the mathematical analysis of the system \eqref{eq:ft} is the paper \cite{IM11} by Imbert and Mellet. In \cite{IM11} the authors proved the existence of a non-negative weak solution to \eqref{eq:ft} for $d =1$,
$s = \frac{1}{2}$ and $n \geqslant  1$. This result was generalized for $d =1$, $s \in (0,1)$
and $n \geqslant  1$ in \cite{MR3397309}.
Interestingly, as shown in \cite{IM11,MR3397309}, the value $n = 4$ is critical for \eqref{eq:ft}. Observe that for the 
classical thin film equation the critical $n$ is $n = 3$. 
Later on, for $s = \frac{1}{2}$ and $n\in [1,4)$, in \cite{MR3406645}, Imbert and Mellet were also able to construct self-similar solutions.
In contrast to the one-dimensional situation, the multi-dimensional case is less understood.

The case $n=1$ and $d \geqslant  1$ is considered by J.L. Vazquez and the first author of this paper in \cite{Se_Va}, 
where existence of non-negative solutions to the  Cauchy problem for \eqref{eq:ft} in $\R^d$ is discussed (here 
 $(-\Delta)^s$ denotes the (singular integral) fractional Laplacian of order $s \in (0,1)$ in $\R^d$, see \cite{AbatangeloValdinoci,MR3613319}).
In \cite{Se_Va} explicit compactly supported and non-negative self-similar solutions are constructed. These solutions match the classical Barenblatt profile for the porous medium equation when $s=0$ (see \cite{MR0046217}) and  the Barenblatt-type profile
obtained for zero contact-angle solutions of the TFE in \cite{MR1148286} (for $d=1$) and in \cite{MR1479525} (for $d \geqslant  1$).
These self-similar solutions are then shown to be related to the long-time dynamics under some extra integrability assumption.
The self-similar solutions are constructed among the ones with connected positivity set. Moreover, they are unique in this class.
The uniqueness of the self-similar solutions {\itshape without} assuming that their positivity set is connected has been recently discussed in
\cite{De-Va23}. This uniqueness result is indeed a consequence of the fact, proved in \cite{De-Va23}, that some nonlocal seminorms are strictly decreasing under the continuous Steiner rearrangement.
For our equation (we recall, equation \eqref{eq:ft} in $\R^d$ with $n=1$) the nonlocal seminorm involved is the $H^s(\R^d)$-seminorm, $s\in (0,1)$.
The reason why the $H^s$-seminorm has such a prominent role is that equation \eqref{eq:ft} in $\R^d$ with $n=1$ is
indeed the Wasserstein gradient flow of the $H^s$-seminorm, as recently proved in \cite{Lisini24}.
For \eqref{eq:ft}, with $n \geqslant 1$ and $d=1$, self-similar solutions were constructed in \cite{MR3406645}.

More recently, the system \eqref{eq:ft} was studied by the two authors of the present paper together with De Nitti an Lisini \cite{DNLST24}. The analysis of \cite{DNLST24} is the starting point of the present contribution. 
For $s\in (0,1)$ and $n\in \left[1,\frac{d+2(1-s)}{(d-2s)_{+}}\right)$, we introduced a suitable notion of weak solution for \eqref{eq:ft}. 
Moreover, for $d=2$ and $d=3$ and for $n$ in the smaller interval $(1, \frac{s+2}{s+1})$, we proved the finite speed of propagation (FSP)
property. The finite speed of propagation means that if the initial condition $u_0 \geqslant 0$ is such that, for some $r_0 > 0$,  
\[
\supp u_0 \subseteq B_{r_0}(0),
\]
then there exists a time $T^* >0$ and a   non-decreasing  function $d(t) \in C([0,T^*])$ that verifies
\[
	d(0) = r_0, \qquad d(t) \leqslant   r_0 + C_0 t^{\frac{1}{nd + 2(s+1)}} \qquad\text{for all } t \in [0,T^*],
\]
with the constant $C_0 > 0$ depending on initial data $u_0$ on $d$, on $s$ and on $n$,
such that
	\begin{align*}
	\supp(u(\cdot,t)) \subset  B_{d(t)}(0) \subseteq \Omega \qquad\text{for all } t \in [0,T^*].
	\end{align*}
Besides, within the same range of $n$, and under a suitable control on the initial entropy, 
we also obtained a lower bound for the so-called waiting time phenomenon (WTP). Specifically, 
we showed that there exists a time $T_0 = T_0(n,s, u_0) \leqslant T^*$ such that
\begin{align*}
\supp(u(\cdot,t)) \subseteq B_{r_0}(0) \quad \text{ for } t \in [0,T_0),
\end{align*}
with an exact lower bound of $T_0$ in terms of the initial entropy. 
In \cite{NT2024}, the authors proved FSP and WTP in the   one-dimensional  case and  for all $n \in (1,2)$ by relying on the fact that weak solutions satisfy the improved regularity $u \in L^4((0,T); L^{\infty}(\Omega))$. However, in the multi-dimensional case,
 the available regularity is weaker: weak solutions are only known to belong to $L^2((0,T); L^{\infty}(\Omega))$. 
This is insufficient to control certain terms appearing in the local entropy inequality when the exponent 
$n$ becomes too large relative to $s$, specifically for $n \in [\frac{s+2}{s+1},2) $. 
This limitation explains why, in \cite{DNLST24}, the analysis was restricted to the range $n\in [1, \frac{s+2}{s+1})$.

Note that the range of values for $n$ in both the existence result and the results concerning 
interface propagation (such as finite speed and waiting time phenomena) are not optimal. 
Moreover, they do not align with the optimal range known for the classical thin film equation corresponding to 
$s=1$. In that case, the well-established results indicate that for 
$d=3$, one has $n < 6$ for weak solutions and $n < 4$ for strong solutions (see \cite{DPGG98SIAM}).

The aim of this paper is to extend the validity of the existence result, the finite speed of propagation,
and the lower bound on the waiting time to a broader range of values of $n$ than those considered in \cite{NT2024, DNLST24}.

It should also be noted that the mathematical study of the simplest one-dimensional model  (\ref{eq:ft}), with $s=1$, 
was initiated by Bernis and Friedman in \cite{B8}. They derived a
positivity property and proved the existence of non-negative weak generalized
solutions of initial-boundary problem for non-negative initial data.
Many interesting qualitative properties of the solutions have been discovered and investigated. The finite speed
of propagation (FSP) of the support of the solution was established,  for  $d =1$ and $0< n < 2$ in \cite{BernisFinite} or $2 \leqslant   n < 3$ in \cite{BernisFinite2}, or $n \geqslant  4 $ in \cite{B8}; for $d\in\{2,3\}$ and $\frac{1}{8}< n < 2$ in \cite{ThinViscous} or $2 \leqslant   n < 3$ in \cite{Grun2003}. Sufficient conditions for the WTP were obtained, for $d=1$ and $0 < n < 3$ and for $d \in\{2,3\}$ and $\frac{1}{8} < n < 2$ in \cite{DalPassoGiacomelliGruen} or for $ 2 \leqslant   n < 3$ in \cite{GruenWTWS}. The uniqueness of the solutions to (\ref{eq:ft}) with $s =1$  is an open and challenging problem. Only partial results are available: see \cite{John,MajdoubMasmoudiTayachi,MR2944624}.

To place our work in context, we recall that a key analytical tool in the study of the classical thin film equation 
is the family of so-called $\alpha$-entropy estimates (see, e.g., \cite{DPGG98SIAM}). For TFE ($s=1$), 
these estimates were first established in one spatial dimension ($d=1$) by Beretta, Bertsch, and Dal Passo 
\cite{BerettaBertschDalPasso}, and independently by Bertozzi and Pugh \cite{BP1996}. 
In higher dimensions ($d=2,3$), the estimates were developed by Dal Passo, Garcke, and Gr\"{u}n \cite{DPGG98SIAM}. 
These estimates, which provide bounds on appropriate powers of the solution $u$, are a cornerstone of the theory, 
as they enable a precise control of the admissible regularity of the flux $u^n \nabla (-\Delta) u$. 

For the nonlocal system \eqref{eq:ft}, $\alpha$-entropy estimates were previously unavailable. 
To address this gap and extend the admissible range of the exponent $n$, our first main contribution 
in this paper is the derivation of appropriate $\alpha$-entropy estimates, both in a global and local 
form (see Equation \eqref{eq:alpha-entropy-u1} and Lemma~\ref{lm:add-000}, respectively). 
These estimates are initially established for a carefully chosen family of approximating problems 
and are then used to pass to the limit as the approximation parameters tend to zero.
Following the approach in \cite{DNLST24} (and in line with other works on both classical and nonlocal thin film equations), 
we employ a nested approximation scheme. This involves a suitable regularization of the mobility 
function $u^n$ near zero and at infinity. The advantage of this construction is that the approximate 
solutions remain strictly positive and possess sufficient regularity, which facilitates the derivation and justification of the entropy estimates.

Thanks to these estimates, we are able to establish the analogue of the existence result from \cite{DNLST24} within the extended parameter range 
$n\in \left( \frac{2(s+1)}{ 4(s+1)-d }, \frac{d + 2(1-s)}{d-2s} +\frac{1}{2}\right)$, as well as the corresponding interface 
propagation results in the range $n \in \left(\frac{2(s+1)}{ 4(s+1)-d },2\right) $. The constructed solutions satisfy the so-called 
$\alpha$-entropy estimates and are therefore referred to as {\itshape $\alpha$-entropy solutions}. 
We emphasize that the notion of (weak) solvability adopted here coincides with that used in \cite{DNLST24}, but differs 
from the definition of weak solution in the local case $s=1$ as introduced in \cite{DPGG98SIAM}. 
This difference arises from the absence, in our nonlocal setting, of a precise integration-by-parts formula 
necessary for identifying the flux.

It is worth highlighting that our derivation of the $\alpha$-entropy estimates within the nonlocal framework brings to light an interesting and nontrivial feature: a type of {\itshape nonlocal chain rule formula}. Notably, this formula holds for both ranges 
$s \in (0,1)$ and $s\in (1,2)$ (see Lemma~\ref{lem-GG}).
For the case $s \in (0,1)$, related results have been previously obtained by Buseghin and Garofalo in \cite{BuseGaro}, complementing earlier formulations by Cordoba and Martinez \cite{CM} (see also Cordoba \& Cordoba \cite{CC} for the chain rule for   $(-\Delta)^s(u^2(x))$ in the case $s\in (0,1)$ 
and for the fractional Laplacian on $\R^d$). In this range of $s$, our formula features a different (but equivalent) representation of the remainder term.
The main novelty of our chain rule lies in its extension to the higher-order case $s\in (1,2)$, for which no 
such result was previously available to our knowledge. Roughly speaking, for a smooth function $\phi:\R\to \R$, the formula takes the form: 
$$
(-\Delta)^s \phi(v(x)) = \phi'(v(x))(-\Delta )^s v(x) + \mathscr{R}_s(\phi, v)  \qquad \forall v \in   H^{2s}_N(\Omega),
$$
where the remainder term $\mathscr{R}_s(\phi, v)$ depends on both 
$\phi$ and $v$, as well as their derivatives. When 
$\phi$ is convex, an important difference arises between the cases 
$s\in (0,1]$ and $s\in (1,2)$. In the latter case, the remainder term 
$\mathscr{R}_s(\phi, v)$ does not retain a definite sign, mirroring the behavior observed for 
$s=2$, i.e.  the biharmonic operator. This lack of sign definiteness indicates that the result of 
Cordoba and Martinez \cite{CM} does not extend to higher-order fractional Laplacians.

%
%{\color{blue}
%Really, when applied to $\phi\circ u$ with $\phi$ a convex function it does not give any sign-determined remainder w.r.t. to the one in \cite{CM}, which is %valid for $s\in (0,1)$, thus indicating that \cite{CM} does not hold for higher order fractional Laplacians.} 

We point out that the main strategy behind our proofs of  FSP  and  WTP  is inspired by the methods developed in
\cite{DalPassoGiacomelliGruen,GruenWTWS, GiacomelliGruen,MR1642176,CT12,MR2073864,MR2265292,MR3989405}.
Our proof of the finite speed of propagation is energetic in nature, relying on a series of various energy estimates 
that allow for the use of a Stampacchia-type iteration lemma. However, due to the nonlocal nature of the operator $(-\Delta)^s$, we cannot apply this strategy directly (indeed, some extra term appears in the local $\alpha$-entropy estimate). 
To resolve this issue, we use  a contradiction-based argument, specifically adapted to the nonlocal setting,
while preserving the essential features of the energetic approach.

\section{Fractional Sobolev spaces and fractional laplacian}
\label{sec:fractional}

In this section, we briefly recall the key definitions and results concerning Sobolev spaces of fractional 
order and the fractional Laplacian that are used throughout this paper. These results are now considered 
classical; for a comprehensive treatment, we refer the reader to \cite{AbatangeloValdinoci, MR2944369}, 
and the references therein.

Let $\{(\lambda_k, \phi_k)\}_{k \geqslant 0}$ denote the sequence of eigenvalues and corresponding 
normalized eigenfunctions of the Laplacian on a bounded, regular domain $\Omega \subset \R^d$, subject to 
homogeneous Neumann boundary conditions. More precisely, each pair $(\lambda_k, \phi_k)$ satisfies: 
\begin{equation}\label{eq:EVN}
\begin{cases}
- \Delta \phi_k = \lambda_k \phi_k  & \text{ in }   \Omega ,\\
\nabla \phi_k \cdot {\hat{\bf{n}}} = 0
 &  \text{ on } \partial\Omega,\\
 \|\phi_k\|_{L^2(\Omega)}=1.
\end{cases}
\end{equation}
Recall that $\lambda_0 = 0$ and that the corresponding eigenfunction is constant, 
given by $\phi_0 = 1/\sqrt{|\Omega|}$. Moreover,
\begin{equation}\label{EVlim}
	\lambda_0=0<\lambda_1\leqslant   \lambda_2\leqslant   \ldots, \qquad \lim_{n\to+\infty}\lambda_n=+\infty.
\end{equation}
For functions $u, v \in L^2(\Omega)$, we denote the inner product in $L^2(\Omega)$ by $(u,v)=\int_\Omega u(x)v(x)\,\d x$.
The sequence ${\phi_k}_{k \geqslant 0}$ forms a Hilbert basis of $L^2(\Omega)$. Accordingly, for any
 $u \in L^2(\Omega)$, we define its Fourier coefficients by
$$
c_k = c_k(u):=(u,\phi_k).
$$ 
We also recall that Parseval’s identity holds:
\begin{equation}\label{Parseval}
	\|u\|^2_{L^2(\Omega)}=\sum_{k=0}^{+\infty} |c_k(u)|^2, \qquad (u,v)=\sum_{k=0}^{+\infty} c_k(u)\,c_k(v).
\end{equation}
We define the Sobolev space $H_N^r(\Omega)$ as 
$$
H_N^r(\Omega):=  \{u\in L^2(\Omega):\,  \|u\|^2_{\dot H_N^r(\Omega)}<+\infty\}.
$$
where, for $r\in [0,+\infty)$,
\begin{equation}\label{defHr}
	\|u\|^2_{\dot H_N^r(\Omega)}:= \mathop{\sum}_{k =0}^{+\infty} {\lambda_k^{r} |c_k|^2}
\end{equation}
is the homogenous $\dot H_N^r(\Omega)$-seminorm. 
Finally, $H_N^r(\Omega)$ is endowed with the norm
\begin{equation}
	\|u\|^2_{H_N^r(\Omega)}:= \|u\|^2_{L^2(\Omega)} + \|u\|^2_{\dot H_N^r(\Omega)}.
\end{equation}

The space $H_N^r(\Omega)$ coincides with the classical fractional Sobolev space $H^r(\Omega)$ for $r\in (0,\frac{3}{2} )$. In the case $r\in (\frac{3}{2},\frac{7}{2})$, we have $H_N^r(\Omega):=  \{u\in H^r(\Omega): \nabla u \cdot {\hat{\bf{n}}} = 0 \text{ on } \partial\Omega\}$. In any case, we have equality of the norms $\|u\|^2_{H_N^r(\Omega)}=\|u\|^2_{H^r(\Omega)}$ for any $u\in H_N^r(\Omega)$.
Moreover, using \eqref{EVlim}, it is not difficult to show that
\begin{equation}\label{eq:equiv}
	\|u\|^2_{L^2(\Omega)} + \|u\|^2_{\dot H_N^r(\Omega)} \quad \text{is equivalent to}\quad \left(\int_\Omega u(x)\,\d x\right)^2 + \|u\|^2_{\dot H_N^r(\Omega)}.
\end{equation}
Using \eqref{EVlim}, it is also immediate to prove that $ H^{r_1}_N(\Omega)\subset H^{r_2}_N(\Omega)$ if $r_1>r_2$. 
Moreover, the following embeddings hold (see \cite{MR2944369}):
\begin{proposition}
Let $r\in [0,+\infty)$. 
\begin{itemize}
	\item If $r<d/2$, then there exists a constant $C$ such that
	\begin{equation}\label{emb1}
 		\| u \|_{L^{2d/(d-2r)}(\Omega)}\leqslant   C  \| u \|_{H^r(\Omega)} \qquad\text{for all } u\in H^r(\Omega).
	\end{equation}
	\item If $r=d/2$ and $p\in (1,+\infty)$, then there exists a constant $C_p$ such that
	\begin{equation}\label{emb2}
 		\| u \|_{L^p(\Omega)}\leqslant   C_p  \| u \|_{H^r(\Omega)} \qquad\text{for all } u\in H^r(\Omega).
	\end{equation}
	\item If $r>d/2$ and $r-d/2\not\in \mathbb{N}$, then there exists a constant $C$ such that
	\begin{equation}\label{emb4}
 	\|u\|_{C^{r-d/2}(\Omega)} \leqslant   C  \| u \|_{H^r(\Omega)} \qquad\text{for all } u\in H^r(\Omega).
	\end{equation}
In particular, 
	\begin{equation}\label{emb3}
 	\| u \|_{L^\infty(\Omega)}\leqslant   C  \| u \|_{H^r(\Omega)} \qquad\text{for all } u\in H^r(\Omega).
	\end{equation}
\end{itemize}
\end{proposition}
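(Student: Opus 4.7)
The plan is to deduce the proposition from the corresponding embeddings on $\R^d$ together with an extension theorem. Since the discussion preceding the proposition identifies $H_N^r(\Omega)$ with the classical fractional Sobolev space $H^r(\Omega)$ up to equivalent norms, it suffices to prove \eqref{emb1}--\eqref{emb3} for the classical space. First I would invoke a universal extension operator $E \colon H^r(\Omega) \to H^r(\R^d)$, bounded for every $r \geqslant 0$, whose existence on bounded regular (Lipschitz) domains is classical. This reduces the problem to Sobolev-type inequalities on the whole space.

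For the subcritical range $r < d/2$, I would identify $H^r(\R^d)$ with the Bessel potential space via the Fourier transform: $u \in H^r(\R^d)$ iff $(1-\Delta)^{r/2} u \in L^2(\R^d)$. Writing $u = \mathcal{G}_r \ast f$ with $f = (1-\Delta)^{r/2} u$ and $\mathcal{G}_r$ the Bessel kernel---which decays exponentially at infinity and behaves like $|x|^{r-d}$ near the origin---the Hardy--Littlewood--Sobolev inequality yields $\|u\|_{L^q(\R^d)} \leqslant C \|f\|_{L^2(\R^d)}$ with $q = 2d/(d-2r)$. Restricting back to $\Omega$ gives \eqref{emb1}.

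For the borderline $r = d/2$, the Bessel kernel $\mathcal{G}_{d/2}$ is only logarithmically singular at the origin, and I would exploit this either by interpolating between $H^{d/2 - \eps}(\R^d) \hookrightarrow L^{p_\eps}(\R^d)$, with $p_\eps \to +\infty$ as $\eps \to 0^+$, and $H^{d/2 + \eps}(\R^d) \hookrightarrow L^\infty(\R^d)$, or via a Trudinger-type estimate followed by H\"older, to obtain \eqref{emb2}. For $r > d/2$ with $r - d/2 \notin \mathbb{N}$, the fractional derivatives of $\mathcal{G}_r$ of order below $r - d/2$ are integrable, and a Morrey-type argument on the resulting representation yields \eqref{emb4}, and hence \eqref{emb3} as a consequence. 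A uniform alternative would be to use the Gagliardo seminorm characterization of $H^r(\Omega)$ from \cite{MR2944369} and apply the fractional Gagliardo--Nirenberg--Sobolev and Morrey inequalities proved there directly.

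The main obstacle, and the reason the reduction to the whole space is the natural move, is that the spectral characterization \eqref{defHr} of $H_N^r(\Omega)$ does not lend itself easily to pointwise manipulations of $u$: one cannot convolve intrinsically against a Bessel kernel on $\Omega$ while respecting the Neumann boundary conditions in an elementary manner. Producing an extension that is bounded simultaneously for all $r \geqslant 0$ on an arbitrary bounded regular domain requires care---Stein's universal extension, or, more generally, Rychkov's extension---and checking its compatibility with the norm equivalence $\|u\|_{H_N^r(\Omega)} \sim \|u\|_{H^r(\Omega)}$ across the thresholds $r = 3/2, 7/2, \ldots$, where the Neumann trace condition enters the definition of $H_N^r(\Omega)$, is the one piece of delicate bookkeeping.
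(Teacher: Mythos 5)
The paper does not prove this proposition: it states the embeddings as classical and simply cites \cite{MR2944369}. Your proposal supplies an actual proof, via extension to $\R^d$ followed by the Bessel-potential representation and Hardy--Littlewood--Sobolev, which is sound and is a genuinely different route from the one in the cited reference (the hitchhiker's guide works from the Gagliardo double-integral seminorm characterization and proves the Sobolev, critical, and Morrey embeddings directly in that framework, after constructing its own extension operator). Both routes reduce the domain estimate to a whole-space estimate via an extension; they differ in the representation used for the fractional norm.

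Two remarks that would tighten your sketch. For $r = d/2$ no interpolation is needed: the monotone inclusion $H^{d/2}(\R^d)\subset H^{d/2-\eps}(\R^d)$, valid because $(1+|\xi|^2)^{(d-2\eps)/4}\leqslant (1+|\xi|^2)^{d/4}$, combined with the subcritical embedding $H^{d/2-\eps}(\R^d)\hookrightarrow L^{d/\eps}(\R^d)$ for every $\eps\in(0,d/2)$, already gives $L^q$ for all $q\geqslant 2$; boundedness of $\Omega$ then covers $q\in(1,2)$. Also, the final paragraph about bookkeeping across the thresholds $r=3/2,\,7/2,\ldots$ is not needed here: the proposition is stated for the classical space $H^r(\Omega)$, and the paper separately records that $\|u\|_{H_N^r(\Omega)}=\|u\|_{H^r(\Omega)}$ whenever $u\in H_N^r(\Omega)$, so the spectral characterization and the Neumann-trace subtleties play no role in establishing \eqref{emb1}--\eqref{emb3}.
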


The following interpolation inequality follows from the definition of   $\dot{H}_N^r(\Omega)$-norm  and H\"{o}lder's inequality (see \cite{DNLST24} for the proof).
\begin{proposition}\label{prop:interp}
The following interpolation of the semi-norms hold.
If $r_0, r, r_1\in [0,+\infty)$, $r_0 \leqslant   r \leqslant   r_1$ and $u\in H^{r_1}_N(\Omega)$, then
\begin{equation}\label{interpsemi}
 	\| u \|_{\dot H^r_N(\Omega)}\leqslant   \| u \|^{1-\theta}_{\dot H^{r_0}_N(\Omega)}\| u \|^\theta_{\dot H^{r_1}_N(\Omega)},
\end{equation}
where $\theta := \frac{r-r_0}{r_1-r_0}$.
\end{proposition}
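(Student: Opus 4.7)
The plan is to reduce the inequality to a direct application of Hölder's inequality on the spectral series defining the homogeneous seminorm. First I would dispose of the trivial endpoint cases $\theta = 0$ and $\theta = 1$, for which the inequality reduces to $\|u\|_{\dot H^{r_0}_N} \leqslant \|u\|_{\dot H^{r_0}_N}$ (respectively for $r_1$), and assume henceforth that $\theta \in (0,1)$, so that $r_0 < r_1$.

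Next I would exploit the fact that, by the definition of $\theta$, the exponent $r$ is a convex combination of the endpoints: $r = (1-\theta)\, r_0 + \theta\, r_1$. Consequently, for every eigenvalue $\lambda_k \geqslant 0$ one has the pointwise factorisation
\begin{equation*}
\lambda_k^{r}\, |c_k|^2 \,=\, \bigl(\lambda_k^{r_0}\, |c_k|^2\bigr)^{1-\theta} \bigl(\lambda_k^{r_1}\, |c_k|^2\bigr)^{\theta},
\end{equation*}
where $c_k = (u,\phi_k)$ denote the Fourier coefficients of $u$. This identity is the key algebraic observation; note that the convention $0^0 = 0$ handles the case $\lambda_0 = 0$ without difficulty.

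Then I would apply Hölder's inequality for series with conjugate exponents $p = 1/(1-\theta)$ and $q = 1/\theta$ to the two nonnegative factors above, summed over $k \geqslant 0$. This yields
\begin{equation*}
\|u\|^2_{\dot H^r_N(\Omega)} \,=\, \sum_{k=0}^{+\infty} \bigl(\lambda_k^{r_0}|c_k|^2\bigr)^{1-\theta} \bigl(\lambda_k^{r_1}|c_k|^2\bigr)^{\theta} \,\leqslant\, \|u\|^{2(1-\theta)}_{\dot H^{r_0}_N(\Omega)}\, \|u\|^{2\theta}_{\dot H^{r_1}_N(\Omega)},
\end{equation*}
with both sums on the right-hand side finite since $u \in H^{r_1}_N(\Omega) \subset H^{r_0}_N(\Omega)$. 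Taking square roots gives the claim.

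I do not anticipate any real obstacle here: the argument is purely formal, relying only on the spectral definition \eqref{defHr} of the seminorm and the standard Hölder inequality on $\ell^1$-type series. The only point requiring a small remark is the monotonicity $H^{r_1}_N(\Omega) \subset H^{r_0}_N(\Omega)$ for $r_0 \leqslant r_1$, which guarantees that the right-hand side is finite whenever the left-hand side is; this follows from \eqref{EVlim} as already noted in the excerpt.
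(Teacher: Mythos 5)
Your proof is correct and follows the same route the paper indicates — writing $r$ as the convex combination $(1-\theta)r_0+\theta r_1$, factoring each term of the spectral series, and applying H\"older's inequality with exponents $1/(1-\theta)$ and $1/\theta$. One tiny quibble: the convention that makes the definition \eqref{defHr} consistent with $\|u\|_{\dot H^0_N}=\|u\|_{L^2}$ via Parseval is $0^0=1$, not $0^0=0$, though this is immaterial here since with $\theta\in(0,1)$ one has $r_1>0$ and the factor $\lambda_0^{r_1}=0$ already annihilates the $k=0$ term on both sides.
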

%
%\begin{proof}
%The key observation is that applying H\"older's inequality  with $p = \frac{r_0}{r \nu }$ and $ q = \frac{r_1}{r (1-\nu ) }$ yields 
%\[
%\sum_{k=0}^{\infty}  \lambda_k^r \abs{c_k(u)}^2  = \sum_{k=0}^{\infty}  \lambda_k^{r \nu}  \lambda_k^{r (1-\nu) }   \abs{c_k(u)}^2 
%\leqslant 
%\left(\sum_{k=0}^{\infty}  \lambda_k^{r_0} \abs{c_k(u)}^2 \right)^{1-\theta}  \left(\sum_{k=0}^{\infty}  \lambda_k^{r_1} \abs{c_k(u)}^2 \right)^{\theta},
%\]
%where  $\nu := \frac{ r_0 (r_1 - r) }{r (r_1 - r_0) }$.
%\end{proof}

For $r\in [0,+\infty)$ and $u\in H^{2r}_N(\Omega)$ we define the $r$-Laplacian operator by
\begin{equation}\label{rlap}
(-\Delta)^r u :=   \mathop {\sum}_{k =0}^{+\infty} {  \lambda_k^{r} c_k \phi_k}.
\end{equation}
The following holds:
\begin{proposition}\label{prop:2r}
If $u\in H^{2r}_N(\Omega)$, then
\begin{equation}\label{charL2}
 	\| (-\Delta)^r u \|^2_{L^2(\Omega)} = \| u \|^2_{\dot H^{2r}_N(\Omega)}.
\end{equation}
\end{proposition}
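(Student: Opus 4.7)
The plan is to derive the claimed identity as a direct consequence of Parseval's identity \eqref{Parseval} combined with the spectral definitions \eqref{rlap} and \eqref{defHr}. No heavy machinery is required; the only genuine content is verifying that the series defining $(-\Delta)^r u$ actually converges in $L^2(\Omega)$ under the hypothesis $u\in H^{2r}_N(\Omega)$, and that, once it does, its Fourier coefficients with respect to $\{\phi_k\}$ are exactly $\lambda_k^r c_k$.

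First I would invoke the hypothesis: by the very definition of $H^{2r}_N(\Omega)$ via \eqref{defHr}, the condition $u\in H^{2r}_N(\Omega)$ reads
\begin{equation*}
\sum_{k=0}^{+\infty} \lambda_k^{2r}|c_k|^2 = \|u\|^2_{\dot H^{2r}_N(\Omega)} < +\infty,
\end{equation*}
so the sequence $\{\lambda_k^r c_k\}_{k\geqslant 0}$ is square-summable. Since $\{\phi_k\}_{k\geqslant 0}$ is a Hilbert basis of $L^2(\Omega)$, Parseval's identity guarantees that the partial sums $\sum_{k=0}^{N}\lambda_k^r c_k \phi_k$ form a Cauchy sequence in $L^2(\Omega)$, so the series in \eqref{rlap} converges in $L^2(\Omega)$ and defines $(-\Delta)^r u \in L^2(\Omega)$ unambiguously.

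Next, by orthonormality of $\{\phi_k\}$, the $j$-th Fourier coefficient of $(-\Delta)^r u$ equals
\begin{equation*}
\bigl((-\Delta)^r u,\phi_j\bigr) = \sum_{k=0}^{+\infty} \lambda_k^r c_k\,(\phi_k,\phi_j) = \lambda_j^r c_j.
\end{equation*}
Applying Parseval's identity \eqref{Parseval} to $(-\Delta)^r u$ and then comparing with \eqref{defHr} yields
\begin{equation*}
\|(-\Delta)^r u\|^2_{L^2(\Omega)} = \sum_{k=0}^{+\infty} |\lambda_k^r c_k|^2 = \sum_{k=0}^{+\infty} \lambda_k^{2r}|c_k|^2 = \|u\|^2_{\dot H^{2r}_N(\Omega)},
\end{equation*}
which is exactly \eqref{charL2}.

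There is no real obstacle in this proof; it is essentially bookkeeping with the spectral resolution. The only point that demands a brief justification is the $L^2$-convergence of the defining series for $(-\Delta)^r u$, which as noted above is immediate from the assumption $u\in H^{2r}_N(\Omega)$ via the summability criterion \eqref{defHr}.
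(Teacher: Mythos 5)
Your proof is correct, and it takes the only natural route: the paper states Proposition~\ref{prop:2r} without argument, treating it as an immediate consequence of the spectral definitions \eqref{rlap}, \eqref{defHr} and Parseval's identity \eqref{Parseval}, which is precisely what you have spelled out. The extra observation you include about $L^2$-convergence of the defining series under the hypothesis $u\in H^{2r}_N(\Omega)$ is a worthwhile detail that the paper leaves implicit.
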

%\begin{proof}
%From \eqref{rlap} we have that $c_k((-\Delta)^r u):= ((-\Delta)^r u,\phi_k)= \lambda_k^{r} c_k(u)$. By Parseval identity \eqref{Parseval} we have
%$\| (-\Delta)^r u \|^2_{L^2(\Omega)} = \sum_{k=0}^{+\infty} \lambda_k^{2r}|c_k(u)|^2$ and we conclude applying \eqref{defHr}.
%\end{proof}

Analogously, using the Parseval product formula in \eqref{Parseval}, it is simple to prove the following integration by parts formula:
\begin{proposition}\label{lem:ip}
Let $r_1, r_2 \in [0,+\infty)$. If $u \in H_N^{r_1+r_2}(\Omega)$ and $v \in H_N^{r_2}(\Omega)$, then
\begin{equation}\label{ip}
	 \int_\Omega((-\Delta)^{r_1} u) ((-\Delta)^{r_2} v)\,\d x =  \int_\Omega((-\Delta)^{r_1+r_2} u )\, v\,\d x.
\end{equation}
\end{proposition}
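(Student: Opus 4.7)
The plan is to verify the identity by expanding both sides in the Neumann eigenbasis $\{\phi_k\}_{k\geqslant 0}$ and applying Parseval's identity from \eqref{Parseval}, exactly as the paper hints. The whole formula collapses to a single Parseval-type series, from which the equality becomes transparent.

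First I would set $c_k(u) := (u,\phi_k)$ and $c_k(v) := (v,\phi_k)$, and apply the defining series \eqref{rlap} to obtain the eigenfunction expansions
\begin{equation*}
(-\Delta)^{r_1} u = \sum_{k=0}^{+\infty} \lambda_k^{r_1} c_k(u)\phi_k, \qquad (-\Delta)^{r_2} v = \sum_{k=0}^{+\infty} \lambda_k^{r_2} c_k(v)\phi_k, \qquad (-\Delta)^{r_1+r_2} u = \sum_{k=0}^{+\infty} \lambda_k^{r_1+r_2} c_k(u)\phi_k.
\end{equation*}
Note that the Fourier coefficients of $(-\Delta)^{r_1} u$ and $(-\Delta)^{r_2} v$ are $\lambda_k^{r_1}c_k(u)$ and $\lambda_k^{r_2}c_k(v)$, respectively, while those of $(-\Delta)^{r_1+r_2}u$ are $\lambda_k^{r_1+r_2}c_k(u)$. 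Applying the Parseval product formula in \eqref{Parseval} to each side, both the left- and right-hand sides of \eqref{ip} reduce to the same series
\begin{equation*}
\sum_{k=0}^{+\infty} \lambda_k^{r_1+r_2}\, c_k(u)\, c_k(v),
\end{equation*}
which yields the identity once this series is shown to be absolutely convergent.

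The remaining point is therefore the absolute convergence of this series. Splitting the weight as $\lambda_k^{r_1+r_2} = \lambda_k^{r_1} \cdot \lambda_k^{r_2}$ and applying the Cauchy--Schwarz inequality in $\ell^2$, one gets
\begin{equation*}
\sum_{k=0}^{+\infty}\lambda_k^{r_1+r_2}|c_k(u)c_k(v)| \leqslant \Bigl(\sum_{k=0}^{+\infty}\lambda_k^{2r_1}|c_k(u)|^2\Bigr)^{1/2}\Bigl(\sum_{k=0}^{+\infty}\lambda_k^{2r_2}|c_k(v)|^2\Bigr)^{1/2} = \|u\|_{\dot H^{2r_1}_N(\Omega)}\,\|v\|_{\dot H^{2r_2}_N(\Omega)},
\end{equation*}
which is finite under the regularity assumptions (using the nesting $H^{r_1+r_2}_N \hookrightarrow H^{2r_1}_N$ when needed, together with \eqref{charL2} and Proposition~\ref{prop:2r}).

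To make the argument fully rigorous, I would first check \eqref{ip} on finite partial sums: for each $N$, replacing $u$ and $v$ by their truncations $u_N := \sum_{k=0}^{N}c_k(u)\phi_k$ and $v_N := \sum_{k=0}^{N}c_k(v)\phi_k$, the identity holds by the orthonormality of the $\phi_k$ and by the pointwise action of $(-\Delta)^r$ on eigenfunctions, giving $\sum_{k=0}^{N}\lambda_k^{r_1+r_2}c_k(u)c_k(v)$ on both sides. Passing to the limit $N\to\infty$ (justified by the absolute convergence just established) then yields \eqref{ip}. The only ``obstacle'' is purely bookkeeping at this level of convergence; there is no nontrivial PDE content, since the whole statement is encoded in the $L^2$-orthogonality of the Neumann eigenbasis and in Parseval's identity.
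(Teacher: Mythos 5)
Your route is the same as the paper's (the paper devotes exactly one sentence to this: expand in the Neumann eigenbasis and invoke Parseval), and your truncation-and-pass-to-the-limit scheme is the right skeleton. The problem is in the convergence check you added. Splitting $\lambda_k^{r_1+r_2}=\lambda_k^{r_1}\cdot\lambda_k^{r_2}$ and applying Cauchy--Schwarz produces the two factors $\|u\|_{\dot H^{2r_1}_N(\Omega)}$ and $\|v\|_{\dot H^{2r_2}_N(\Omega)}$, and the second one is \emph{not} controlled by the hypothesis $v\in H^{r_2}_N(\Omega)$: since $r_2\leqslant 2r_2$ we have $H^{2r_2}_N(\Omega)\subsetneq H^{r_2}_N(\Omega)$, so the ``nesting'' goes the wrong way on the $v$-side, and you never address $v$ at all. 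Your remark $H^{r_1+r_2}_N\hookrightarrow H^{2r_1}_N$ only handles the $u$-factor, and only under the unstated extra condition $r_2\geqslant r_1$.

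Moreover, this is not a matter of choosing a cleverer splitting. If one writes $\lambda_k^{r_1+r_2}=\lambda_k^{a}\lambda_k^{b}$ with $a+b=r_1+r_2$ and wants $\bigl(\sum_k\lambda_k^{2a}|c_k(u)|^2\bigr)^{1/2}$ to be finite from $u\in H^{r_1+r_2}_N$ and $\bigl(\sum_k\lambda_k^{2b}|c_k(v)|^2\bigr)^{1/2}$ finite from $v\in H^{r_2}_N$, one needs $2a\leqslant r_1+r_2$ and $2b\leqslant r_2$, which force $2(r_1+r_2)\leqslant r_1+2r_2$, i.e.\ $r_1\leqslant 0$. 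For $r_1>0$ one can in fact choose nonnegative Fourier coefficients with $u\in H^{r_1+r_2}_N(\Omega)$, $v\in H^{r_2}_N(\Omega)$ for which $\sum_k\lambda_k^{r_1+r_2}c_k(u)\,c_k(v)=+\infty$, so under the stated hypotheses neither side of \eqref{ip} is guaranteed to make sense. The sharp Cauchy--Schwarz splitting compatible with $v\in H^{r_2}_N(\Omega)$ is $a=r_1+\tfrac{r_2}{2}$, $b=\tfrac{r_2}{2}$, which requires $u\in H^{2r_1+r_2}_N(\Omega)$ rather than $u\in H^{r_1+r_2}_N(\Omega)$. This is most likely a slip in the Proposition's hypotheses rather than in your idea --- note that in the paper's actual applications (e.g.\ $u=v$ and $r_1=r_2$) the stated regularity coincides with what is needed --- but as written your convergence argument does not close. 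You should either adopt the strengthened hypothesis $u\in H^{2r_1+r_2}_N(\Omega)$ and use the splitting $a=r_1+\tfrac{r_2}{2}$, $b=\tfrac{r_2}{2}$, or flag explicitly that the Proposition's assumptions need adjustment.
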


An immediate consequence of  Propositions \ref{prop:interp} and \ref{prop:2r} 
 is the following interpolation inequality:
\begin{equation}\label{dong-01}
	\| (-\Delta)^{\beta} v \|_{L^2(\Omega)} \leqslant    \| (-\Delta)^{\frac{s+1}{2}} v \|_{L^2(\Omega)}^{\theta} \| v \|_{L^2(\Omega)}^{1-\theta} 
\end{equation}
  is  valid for $v \in   H^{s+1}_N(\Omega)$ and $\beta \in (0, \frac{1+s}{2})$,
where $ \theta := \frac{2\beta}{s+1}$.

We also recall that for $s\in (0,1)$ the fractional Laplacian $(-\Delta)^s$ can be expressed via the heat flow of $-\Delta$ with homogeneous Neumann boundary conditions 
(see, e.\,g. \cite{MR3613319,stinga2019,StingaTorrea}). Indeed, the following holds:
\begin{equation}
\label{eq:semigroup}
(-\Delta)^{s} \psi(x) = \tfrac{1}{\Gamma(-s)} \int \limits_0^{+\infty} { (e^{t\Delta}\psi(x) - \psi(x) ) \tfrac{\d t}{t^{1+s}}},
\end{equation}
where
\begin{equation}
\label{eq:hk}
e^{t\Delta}\psi(x) = \int \limits_{\Omega} { K(x,y,t) \psi(y)\,\d y}, \qquad  e^{t\Delta} 1 = \int \limits_{\Omega} { K(x,y,t)\,\d y} = 1.
\end{equation}
Here, $K(x,y,t)$ is the heat kernel of the heat operator $(\partial_t -\Delta)$ on $\Omega\times (0,+\infty)$ with homogeneous Neumann boundary conditions. It is well known that there exist two positive constants $C_K^{(1)}$ and $C_K^{(2)}$ (see, for example, for Dirichlet boundary conditions
\cite{davies1989heat}  and for Neumann boundary conditions \cite{choulli2015gaussian}) such that
\begin{equation}
\label{control_kernel}
C_K^{(1)} t^{-\frac{d}{2}} e^{-\frac{|x-y|^2}{4t}}\leqslant K(x,y,t)\leqslant C_{K}^{(2)}t^{-\frac{d}{2}} e^{-\frac{|x-y|^2}{4t}}.
\end{equation}
Note that the validity of the representation follows from the general theory, however, for concreteness, we give here a direct proof. 
We denote with $L$ the linear operator defined by the right-hand side of \eqref{eq:semigroup}. 
Given $\psi=\sum_{k=0}^{+\infty}(\psi,\phi_k)\phi_k$ in $H^{2s}_N(\Omega)$, we have that 
\[
L \psi(x) =\sum_{k=0}^{+\infty}(\psi,\phi_k)L\phi_k (x)  \qquad \text{ for } x\in \Omega.
\]
Therefore, we compute $L\phi_k$ via the right-hand side of \eqref{eq:semigroup}. 
To this end, recall that $\phi_k$ is the $k$-th eigenfunction of the Laplacian operator on $\Omega$ with Neumann boundary conditions. 
Thus, it is easy to realize that $v:=e^{t\Delta}\phi_k$, namely, the unique solution of initial-boundary value problem: 
\[
\begin{cases}
	\partial_t v -\Delta v = 0 \qquad &\textrm{ in }\Omega\times (0,+\infty),\\
	\nabla v \cdot {\hat{\bf{n}}} = 0  \qquad &\textrm{ on }\partial\Omega\times (0,+\infty),\\
	v(x,0) = \phi_k(x) \qquad &\textrm{ on }\Omega\times \left\{0\right\},
\end{cases}
\]
is 
\[
e^{t\Delta}\phi_k(x) = e^{-t\lambda_k}\phi_k(x),
\]
where $\lambda_k$ is the eigenvalue associated to 
$\phi_k$. 
As a result,
\[
L\phi_k(x)= \left(\tfrac{1}{\Gamma(-s)}\int_0^{+\infty}\left(e^{-t\lambda_k}-1\right)\tfrac{\d t}{t^{1+s}}\right)\phi_k(x) = \lambda_k^{s}\phi_k(x),
\]
thanks to the well-known formula, which is valid for any $\eta > 0$ and $s \in (0,1)$,
\[
\eta^s=\tfrac{1}{\Gamma(-s)}\int_0^{+\infty}\left(e^{-t\eta}-1\right)\tfrac{\d t}{t^{1+s}}.
\]
Recalling the definition \eqref{rlap} of the fractional Laplacian $(-\Delta)^s$, the formula \eqref{eq:semigroup} easily follows.

\section{Main results: existence and upper bound on interface propagation speed}
\label{sec:main}

We recall from \cite{NT2024,DNLST24} the definition of the convex and lower semicontinuous entropy functional 
$\mathscr{G}:\R\to [0,+\infty]$, characterized by the following properties:
$$
\mathscr{G}''(y) = y^{-n} \text{ for } y>0, \ \ \mathscr{G}'(1)=\mathscr{G}(1)= 0, \text{ and } \mathscr{G} = +\infty \text{ for } y< 0.
$$
Then, the following holds:
\begin{equation}\label{eq:G}
\mathscr{G}(u)=\left\{
\begin{array}{ll}
\displaystyle u \ln u - u +1 & \mbox{ if } n=1, \\[10pt]
\displaystyle  \tfrac{u^{2-n}}{(n-2)(n-1)} +\tfrac{u}{n-1}+ \tfrac{1}{2-n} & \mbox{ if } 1< n< 2, \\[10pt]
\displaystyle \ln \tfrac 1 u +u-1 & \mbox{ if } n=2,\\[8pt]
\displaystyle \tfrac{1}{(n-2)(n-1)}\tfrac{1}{u^{n-2}} +\tfrac{u}{n-1} -\tfrac{1}{n-2} & \mbox{ if } n> 2.
\end{array}
\right.
\end{equation}
Next, we define the $\alpha$-entropy functional $ \mathscr{G}_{\alpha}: [0,+\infty)\to [0,+\infty) $ as the family of functions satisfying
\begin{equation}\label{eq:entropy_prop}
\mathscr{G}_{\alpha}''(z) = z^{\alpha-n}  \qquad \forall \, z\in (0,+\infty).
\end{equation}
Accordingly,
\begin{equation}\label{eq:entropy}
\mathscr{G}_{\alpha}(z): =
\begin{cases} \tfrac{ z^{\alpha  -n +2} - A^{\alpha  -n +2 } }{(\alpha  -n +1)(\alpha  -n +2)} - \tfrac{A^{\alpha  -n +1}}{\alpha  -n +1}(z - A) \,\,&\textrm{
if } \alpha   \neq  n -1, n -2  , \\
z \ln z - z(\ln A + 1) +A \,\,&\textrm{ if }
\alpha   = n - 1 , \\
\ln (\tfrac{A}{z}) + \tfrac{z}{A} - 1, \,\, &\text{ if } \alpha  = n - 2,
\end{cases}
\end{equation}
where $A = 0$ if $\alpha > n-1$ and $A > 0$ if else.
Observe that when $\alpha= 0$, the definition recovers the original entropy functional, i.\,e.
$\mathscr{G}_0 = \mathscr{G}$.
Here is the definition of weak solution. 
\begin{definition}[Weak solution]
\label{def:weak_sol}
A function $u:\Omega\times (0,T)\to \R^+$ is a weak solution of \eqref{eq:ft} if 
\begin{align*}
	u &\in L^\infty((0,T); H^s(\Omega))\cap L^2((0,T);H_N^{1+s}(\Omega))\cap C([0,T];L^2(\Omega)), \\
	\partial_t u &\in L^2((0,T); (W^{1,q}(\Omega))^*),
\end{align*}
for $q := \frac{4d}{2d - n(d-2s)_+}\geqslant  2$, and
\begin{equation}
\label{eq:weak_sol}
\begin{aligned}
& -\iint_{\Omega_T}u\partial_t v \,\d x \,\d t =    \iint_{\Omega_T}  u^n p   \Delta v \,\d x \,\d t
+n   \iint_{\Omega_T} u^{n-1} p  \nabla u  \cdot \nabla v \,\d x \,\d t + \int_{\Omega}u_0v \,\d x, \\
&\qquad\qquad\text{for all }v\in C^{\infty}_{c}(\overline{\Omega}\times [0,T))
\textrm{ such that } \nabla v\cdot \hat{{\bf n}}=0 \textrm{ on }\partial\Omega\times(0,T),\\
&p = (-\Delta )^s u\qquad \textrm{ a.\,e.~in } \Omega_T.
\end{aligned}
\end{equation}

\end{definition}
%
%{\color{red}Do we need this next result? Better, do we really need to state it as a Theorem? At the end this is the existence theorem we proved in the old paper.}
%\begin{theorem}[existence of a weak solution]
%\label{th:ex-0}
%For $n\in \left(1, \tfrac{d + 2(1-s)}{d-2s}\right) $ and $u_0$ such that 
%\begin{equation}\label{eq:hyp_u0_ex-0}
%\begin{split}
%& u_0\in H^s(\Omega), \,\,\, u_0\geqslant 0, \,\,\, \int_{\Omega}\mathscr{G}(u_0(x))\d x<+\infty,
%\end{split}
%\end{equation}
%\end{theorem}

\begin{theorem}[existence of an $\alpha$-entropy solution]
\label{th:ex}
Let 
\[
n\in \left( \tfrac{2(s+1)}{ 4(s+1)-d }, \tfrac{d + 2(1-s)}{(d-2s)_+} +\tfrac{1}{2}\right) , \ \ 
s \in \left(\tfrac{(d-2)_+}{2},1 \right),
\]
and let the initial data $u_0$ satisfy, for some $\alpha \in \left(-1 + \frac{d}{4(s+1) - d},1  \right)$, the condition
% (since we use Proposition \ref{prop:alpha_entropy_est} we actually need this range. I put the open interval because I need some compactness, see the proof %of existence. {\color{blue} Update: see the blue part in the existence})}, 
\begin{equation}
\label{eq:hyp_u0_ex}
\begin{split}
& u_0\in H^s(\Omega), \,\,\, u_0\geqslant 0, \,\,\, \int_{\Omega}\mathscr{G}_{\alpha}(u_0(x))\d x<+\infty.
\end{split}
\end{equation} 
Then, there exists a weak solution $u$ satisfying
\begin{equation}
\label{eq:alpha-entropy-u1}
\int \limits_{\Omega } \mathscr{G}_{\alpha}(u)   \d x  + \tfrac{4}{ (\alpha +2)^2}  \iint \limits_{\Omega_T} \abs{\Dusm (u^{\frac{\alpha +2}{2}} ) }^2 \d x \d t \leqslant
\int \limits_{\Omega } \mathscr{G}_{\alpha}(u_{0}) \d x  + C_1 \iint \limits_{\Omega_T} { u^{\alpha +2 }  \d x \d t }.
\end{equation}
Moreover, $u$ satisfies 
\begin{eqnarray}\label{eq:ineq}
	\| {u(\cdot,t)}\|_{\overset{.}{H}_N^{s}(\Omega)}^2 + 2 \iint_{\Omega_t} \abs{g(x,r)}^2 \,\d x \,\d r \leqslant   \| u_0 \|_{\overset{.}{H}_N^s(\Omega)}^2  \qquad\text{for all } t\in (0,T],
	\end{eqnarray}
	where the pseudo-flux $g\in L^2(\Omega_T;\R^d)$ is implicitly defined by
	
	\begin{equation}
	\label{eq:pseudo_flux_weak}
	u^{n/2}g = \nabla \left(u^n p\right)-nu^{n-1}p\nabla u\qquad \text{a.\,e.~in }\Omega_T.
	\end{equation}
	Moreover, if 
	$n\in \left( \tfrac{4(s+1)}{ 4(s+1)-d } , \tfrac{d + 2(1-s)}{(d-2s)_+} +\tfrac{1}{2}\right)$ then we can identify $g$ as 
	\begin{equation}
 \label{eq:pseudo_flux}
  { \iint_{\Omega_T}g\cdot  \bm{\phi}    \,\d x \,\d t = -\iint_{\Omega_T} { u^{\frac{n}{2}}p \div  \bm{\phi}  \,\d x \, \d t}
- \tfrac{n}{2}\iint_{\Omega_T} { u^{\frac{n}{2}-1} p \nabla u \cdot \bm{\phi}  \,\d x \,\d t} }
\end{equation}
for all $ \bm{\phi}  \in C_c^\infty ({\Omega}_T;\R^d)$.
\end{theorem}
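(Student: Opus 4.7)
The plan is to construct the solution as a limit of a nested approximation scheme, and to derive the $\alpha$-entropy estimate \eqref{eq:alpha-entropy-u1} at the approximate level so that it passes to the limit by lower semicontinuity. The energy inequality \eqref{eq:ineq} and the pseudo-flux identification then follow from the estimates already available in \cite{DNLST24} once the extended range of $n$ is covered.

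First I would regularize the problem following \cite{DNLST24}: replace the mobility $u^n$ by a bounded, strictly positive function $f_{\eps,M}(u)$ that agrees with $u^n$ for $u \in [\eps,M]$ and is $\geqslant \eps$ near $0$ and bounded near $\infty$, and shift the initial datum to $u_0+\eps$ so that the approximate problem is uniformly parabolic and its unique smooth solution $u_{\eps,M}$ is strictly positive. Parabolic regularity gives enough smoothness to legitimately apply the integration-by-parts formula of Proposition~\ref{lem:ip} and the nonlocal chain rule of Lemma~\ref{lem-GG}.

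The heart of the proof is the derivation of \eqref{eq:alpha-entropy-u1} for $u_{\eps,M}$. I test the PDE against $\mathscr{G}_\alpha'(u_{\eps,M})$; since $\mathscr{G}_\alpha''(z)=z^{\alpha-n}$ and $f_{\eps,M}(u)=u^n$ on the positivity range, the dissipation on the left reduces formally to
\begin{equation*}
\int_\Omega \nabla\bigl(\mathscr{G}_\alpha'(u_{\eps,M})\bigr)\cdot u_{\eps,M}^{\,n}\nabla p_{\eps,M}\,\d x
= \tfrac{1}{\alpha+1}\int_\Omega \nabla(u_{\eps,M}^{\,\alpha+1})\cdot\nabla (-\Delta)^{s}u_{\eps,M}\,\d x.
\end{equation*}
I then invoke the nonlocal chain rule of Lemma~\ref{lem-GG} applied with $\phi(z)=z^{(\alpha+2)/2}$ (writing $u^{\alpha+1}= u\cdot(u^{(\alpha+2)/2})^{2\alpha/(\alpha+2)}$ and manipulating with Proposition~\ref{prop:2r}) to rewrite this, modulo a remainder with controlled sign, as a positive multiple of $\|\Dusm(u_{\eps,M}^{(\alpha+2)/2})\|_{L^2}^2$. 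The remainder terms, together with contributions from the truncation $f_{\eps,M}$, are then bounded by $C_1\|u_{\eps,M}\|_{L^{\alpha+2}}^{\alpha+2}$; the interpolation \eqref{dong-01} and the Sobolev embedding applied to $u_{\eps,M}^{(\alpha+2)/2}\in H_N^{(1+s)/2}(\Omega)$ are exactly what forces the range hypothesis $\alpha > -1+\frac{d}{4(s+1)-d}$ and the lower bound on $n$ (this is the main obstacle). After absorbing the right-hand interpolation term into the dissipation when needed and using Gronwall, I obtain the estimate \eqref{eq:alpha-entropy-u1} uniformly in $\eps,M$.

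With the $\alpha$-entropy estimate in hand, together with the standard energy estimate obtained by multiplying the equation by $p_{\eps,M}=(-\Delta)^s u_{\eps,M}$ (which gives the $L^\infty_t H^s_x\cap L^2_t H^{1+s}_x$ bound and the pseudo-flux $g_{\eps,M}\in L^2(\Omega_T;\R^d)$), I apply the Aubin--Lions lemma: $\partial_t u_{\eps,M}$ is controlled in $L^2(0,T;(W^{1,q})^*)$ from the PDE and the energy bound, hence $u_{\eps,M}\to u$ strongly in $L^2(\Omega_T)$, and up to subsequences a.e. The strong convergence combined with the uniform bound on $u_{\eps,M}$ in $L^2_tH^{1+s}_x$ allows passing to the limit in the weak formulation \eqref{eq:weak_sol} along the lines of \cite{DNLST24}; the $\alpha$-entropy inequality and the energy inequality \eqref{eq:ineq} pass to the limit by weak lower semicontinuity and Fatou applied to $\mathscr{G}_\alpha\geqslant 0$. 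Finally, for $n>\frac{4(s+1)}{4(s+1)-d}$, the extra integrability gained from the $\alpha$-entropy (choosing $\alpha=n/2-1$ say) makes $u^{n/2-1}p\nabla u$ integrable, so that the identity \eqref{eq:pseudo_flux} defining $g$ distributionally can be passed to the limit and identified with the pseudo-flux from \eqref{eq:pseudo_flux_weak}, completing the proof.
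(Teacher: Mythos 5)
Your roadmap (regularize, test against $\mathscr{G}'_\alpha$, invoke the nonlocal chain rule, Aubin--Lions, lower semicontinuity) matches the paper's structure in broad outline, but there is a genuine gap at the central step, which is precisely where the paper's novelty lies.

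You claim that after applying Lemma~\ref{lem-GG} with $\phi(z)=z^{(\alpha+2)/2}$ one is left with ``a remainder with controlled sign.'' For $\mu=s+1\in(1,2)$ this is false, and the paper emphasizes exactly this point: unlike the C\'ordoba--C\'ordoba / C\'ordoba--Mart\'inez case $\mu\in(0,1]$, the remainder $\mathcal{J}_{s+1}[u]$ has \emph{no} definite sign for the higher-order operator. It cannot be dropped; it must be estimated. After using \eqref{ps-new} the residual collapses to the integral of $F(u(x),u(y))$ (as in \eqref{rrt-fff}) against the kernel $\int_0^\infty K(x,y,t)\,t^{-(s+2)}\d t\sim |x-y|^{-(d+2(s+1))}$, and the argument proceeds via two ingredients that are absent from your proposal: (a) the pointwise estimate $|F(r,w)|\leqslant C(\alpha)\,|r-w|^{\alpha+2}$, valid only for $\alpha\in(-1,1]$ (which is where the upper bound $\alpha\leqslant 1$ comes from), and (b) the H\"older continuity $u_{\eps,\delta}(\cdot,t)\in C^{\vartheta}(\bar\Omega)$ with $\vartheta=2(s+1)-d/2$, which requires $\vartheta(\alpha+2)>2(s+1)$ to make the near-diagonal contribution vanish --- and this is exactly the constraint $\alpha>-1+\tfrac{d}{4(s+1)-d}$, not the interpolation \eqref{dong-01} as you suggest. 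That H\"older bound is not free: it rests on the extra regularity $u_{\eps,\delta}\in L^\infty(0,T;H^{s+1}(\Omega))\cap L^2(0,T;H^{2(s+1)}(\Omega))$ of Proposition~\ref{ex-reg}, which in turn requires lifting the approximate initial datum into $H^{s+1}(\Omega)$. Without the quantitative $F$-bound and the H\"older regularity, the entropy estimate does not close. A secondary remark: the paper's $m_{\eps,\delta,\gamma}$ is not a cutoff of $u^n$ but a smooth rational regularization built so that $(\mathscr{G}_\alpha^{\eps,\delta})''(z)\,m_{\eps,\delta}(z)=z^{\alpha}$ holds \emph{exactly}; a truncated mobility $f_{\eps,M}$ would produce additional commutator-type terms on the truncation levels that you would also need to control.
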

\begin{remark}
Note that if $n\in \left( \tfrac{4(s+1)}{ 4(s+1)-d } , \tfrac{d + 2(1-s)}{(d-2s)_+} +\tfrac{1}{2}\right)$ then we can identify 
the pseudo-flux $g$ as 
\[
g = 
\begin{cases}
u^{n/2}\nabla p &\qquad \textrm{ on } \,\left\{(x,t) \in \Omega_T: \, u(x,t) >0\right\},\\
0&\qquad \textrm{ on } \left\{(x,t) \in \Omega_T:\, u(x,t)=0\right\}.
\end{cases}
\]
\end{remark}
\begin{theorem}[Upper bounds on interface propagation speed]\label{th:1}
Assume that 
$$
\Omega = B_R(0) , \ \ n \in \left(\tfrac{2(s+1)}{ 4(s+1)-d },2 \right), \ \  s \in \left(\tfrac{(d-2)_+}{2},1 \right) ,
$$
and suppose that
\begin{equation}\label{e-11-000}
	\supp(u_0(\cdot)) \subseteq   B_{r_0}(0)
\end{equation}
for some $r_0 \in (0,R)$. 	Then there exists a time $T^* >0$ and a non-decreasing function $d(t) \in C([0,T^*])$, with $d(0) = r_0$, such that
$$
\supp(u(\cdot,t)) \subseteq B_{d(t)} (0) \subset \Omega  \qquad \text{ for all } t \in [0,T^*],
$$
and
$$
d(t) \leqslant r_0 + C_0 t^{\frac{1}{nd +2(s+1)}}   \qquad \text{ for all } t \in [0,T^*],
$$
where $C_0 > 0$ depends on   $\|u_0\|_1$, $s$, $d$, and $n$. 
\end{theorem}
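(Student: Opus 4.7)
The key new ingredient over \cite{DNLST24} is the freedom to choose $\alpha$ in the localized $\alpha$-entropy inequality of Lemma~\ref{lm:add-000}; the plan is to optimize this choice so that the extra dissipation $|\Dusm(u^{(\alpha+2)/2})|^2$ can control the shell mass throughout the extended range $n\in(\frac{2(s+1)}{4(s+1)-d},2)$. For $r_0<\rho_1<\rho_2\leqslant R$, pick a smooth radial cutoff $\zeta$ vanishing on $B_{\rho_1}(0)$, equal to $1$ outside $B_{\rho_2}(0)$, with $|\nabla\zeta|\lesssim (\rho_2-\rho_1)^{-1}$. Since $u_0\equiv 0$ on $\supp\zeta$, testing the local $\alpha$-entropy inequality with a suitable power of $\zeta$ yields, schematically,
\begin{equation*}
\sup_{\tau\in[0,t]}\int_\Omega \mathscr{G}_\alpha(u)\,\zeta^m\,\d x
+\iint_{\Omega_t}|\Dusm(u^{(\alpha+2)/2})|^2\zeta^m\,\d x\,\d\tau
\;\leqslant\; \mathscr{R}(t;\rho_1,\rho_2),
\end{equation*}
where $\mathscr{R}$ collects the nonlocal commutator-type remainder produced by the fact that $(-\Delta)^s$ does not commute with multiplication by $\zeta$.

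\textbf{Closing the energy estimate.} Setting
\begin{equation*}
E(\rho,t):=\sup_{\tau\in[0,t]}\int_{\{|x|\geqslant\rho\}}u^{\alpha+2}(x,\tau)\,\d x
+\iint_{\{|x|\geqslant\rho\}\times(0,t)}|\Dusm(u^{(\alpha+2)/2})|^2\,\d x\,\d\tau,
\end{equation*}
I would combine mass conservation $\|u(\cdot,t)\|_{L^1(\Omega)}=\|u_0\|_{L^1(\Omega)}$, the Sobolev embedding~\eqref{emb1}, and the interpolation~\eqref{interpsemi} (in the form of a Gagliardo--Nirenberg type inequality) to promote the dissipation into an $L^p$-control of $u^{(\alpha+2)/2}$ on the shell. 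Integration in time then yields, for $\rho_1<\rho_2$,
\begin{equation*}
E(\rho_2,t)\;\leqslant\;\frac{C\,t^{\sigma}}{(\rho_2-\rho_1)^{\beta}}\,E(\rho_1,t)^{1+\theta}\,+\,\mathscr{R}(t;\rho_1,\rho_2),
\end{equation*}
with exponents $\sigma,\beta,\theta>0$ depending on $d,s,n,\alpha$. The constraint $n<2$ together with the lower bound $n>\frac{2(s+1)}{4(s+1)-d}$ is precisely what allows a choice of $\alpha\in(-1+\frac{d}{4(s+1)-d},1)$ making $\theta>0$, while a scaling check fixes $t^{\sigma}/(\rho_2-\rho_1)^{\beta}\sim t/(\rho_2-\rho_1)^{nd+2(s+1)}$, reproducing the Barenblatt-type exponent.

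\textbf{Stampacchia iteration and the nonlocal obstruction.} With the ansatz $d(t)=r_0+C_0\,t^{1/(nd+2(s+1))}$, nested radii $\rho_k:=r_0+(1-2^{-k})(d(t)-r_0)$, and $\mathcal{E}_k:=E(\rho_k,t)$, the inequality above gives the recursion $\mathcal{E}_{k+1}\leqslant C\,2^{k\beta}\mathcal{E}_k^{1+\theta}+\mathscr{R}_k$. In the purely local case $\mathscr{R}_k\equiv 0$ and the classical Stampacchia lemma forces $\mathcal{E}_\infty=0$ once $\mathcal{E}_0$ is sufficiently small, which is achieved by taking $C_0$ large; this would directly produce $\supp u(\cdot,t)\subset B_{d(t)}(0)$. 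The genuine difficulty is $\mathscr{R}_k$: because of the nonlocality of $(-\Delta)^s$ it does not vanish on $\supp\zeta$ and is not absorbed by the dissipation. To overcome this I would argue by contradiction, assuming that $\supp u(\cdot,t^*)\not\subset B_{d(t^*)}(0)$ for some $t^*\in(0,T^*]$, and use the heat-kernel representation~\eqref{eq:semigroup} with the Gaussian bound~\eqref{control_kernel} to show that $\mathscr{R}_k$ carries a factor $\exp(-c(\rho_{k+1}-\rho_k)^2/\tau)$ coming from $K(x,y,\tau)$ evaluated at points in disjoint balls; this makes $\mathscr{R}_k$ super-polynomially small in $k$ and negligible against the polynomial Stampacchia scaling. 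The recursion then still forces $\mathcal{E}_\infty(t^*)=0$, contradicting the assumption. Calibrating this Gaussian tail of $K$ against the polynomial scaling already fixed by the iteration is the step I expect to be the main obstacle, and is the place where the nonlocal setting genuinely requires new ideas with respect to the local thin-film case.
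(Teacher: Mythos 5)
Your overall scaffolding (local $\alpha$-entropy inequality, shell energy, Stampacchia iteration, contradiction argument to neutralize a nonlocal remainder) matches the paper's plan, and your scaling check correctly recovers the exponent $t^{1/(nd+2(s+1))}$. However, the crucial step --- how the contradiction hypothesis is used to kill the nonlocal remainder --- is where your proposal diverges from the paper and where I believe your mechanism breaks down.

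The obstruction coming out of Lemma~\ref{lm:add-000} is not a heat-kernel tail that can be made Gaussianly small. After the commutator estimates and the Gagliardo--Nirenberg step, the local inequality~\eqref{main-alp} delivers a term of the form $\delta^{-2(s+1)} A_T^{1-\varpi}(0)\,A_T^{\varpi}(S)$, i.e.\ a \emph{coupling between the global shell mass $A_T(0)$ and the local one $A_T(S)$}, with only polynomial $\delta$-dependence. Such a term cannot acquire a factor $\exp(-c(\rho_{k+1}-\rho_k)^2/\tau)$: the commutators $R_{\frac{s+1}{2}}(u^{\frac{\alpha+2}{2}}\psi,\psi)$ and $R_{s+1}(u^{\frac{\alpha+2}{2}},\psi)$ involve test functions with genuinely overlapping supports (the cutoff $\psi_{S,\delta}$ transitions precisely over the annulus where $u$ may be positive), and the commutator bound $\|R_\beta(f,g)\|_{L^2}\leqslant\|f\|_{L^2}\|(-\Delta)^\beta g\|_{L^\infty}$ used in Lemma~\ref{lm:add-000} already dictates the polynomial rate $\delta^{-2(s+1)}$. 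So there is no hidden super-polynomial smallness to exploit, and a Stampacchia iteration with a non-vanishing, non-small forcing $\mathscr{R}_k$ does not close.

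What the paper actually does with the contradiction hypothesis is quite different, and is the missing idea. Assume $u(\cdot,t)>0$ on all of $\bar\Omega$ for $t\in(0,T]$. H\"older continuity then gives a time-dependent positive lower bound $c(t)>0$, hence $A_T(S)\geqslant C_0(T)(R-S)>0$ for $S<R$. Introducing a comparison function $\gamma_T$ as in~\eqref{ddr} (so that $A_T(S)\geqslant\gamma_T(S)$), the cross-coupling is then dominated via~\eqref{eq:A_T} by
\begin{equation*}
A_T(S) + A_T^{1-\varpi}(0)\,A_T^{\varpi}(S)\;\leqslant\;\tilde C_T(S)\,A_T(S),\qquad
\tilde C_T(S):=2\bigl(A_T(0)/\gamma_T(S)\bigr)^{1-\varpi},
\end{equation*}
turning the inequality into a genuine self-improving recursion in $A_T$ alone. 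After the substitution $\tilde A_T(S):=\tilde C_T^{-\lambda}(S)A_T(S)$ the classical Stampacchia lemma (Lemma~\ref{L-1}(i)) applies with exponent $\beta>1$ and yields $\tilde A_T(S)=0$ for $S\geqslant d(T)$; since $d(T)<R$ for small $T$, this contradicts the everywhere-positivity that produced $\tilde A_T(S)>0$ on $[0,R)$. In short: the contradiction is used to produce a \emph{lower} bound on the local shell mass, which in turn upper-bounds the coupling term; it is not used to argue that the remainder is Gaussianly negligible. Also note that the working quantity is $A_T(S)=\iint_{\Omega_T(S)}u^{\alpha+2}$, a space-time integral without a $\sup_\tau$, which is what allows the clean lower bound $A_T(S)\geqslant C_0(T)(R-S)$ from positivity and H\"older continuity; your $E(\rho,t)$ with a $\sup_\tau$ and dissipation term would not furnish this lower bound directly.

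So the gap is concrete: replace the Gaussian-tail mechanism by the lower-bound-on-$A_T$ mechanism (comparison function $\gamma_T$, Stampacchia-type Lemma~\ref{lem-st-n} to show $\gamma_T$ vanishes beyond $R$, and the bound $\tilde C_T(S)$). Without that, the Stampacchia iteration you set up does not have the $\mathscr{R}_k\to 0$ property you require, and the argument does not terminate.
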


Finally, we obtain a lower bound on the waiting time.

\begin{theorem}[Lower bounds on waiting times]\label{th:wt}
Let us assume that 
$$
\Omega = B_R(0) , \ \ n \in \left(\tfrac{2(s+1)}{ 4(s+1)-d },2 \right), \ \  s \in \left(\tfrac{(d-2)_+}{2},1 \right) ,
$$
and that condition (\ref{e-11-000}) holds for some $0 < r_0 < R$, and, furthermore, suppose that
\begin{equation}\label{G}
\limsup_{\delta \to 0^+} \, \left( \delta^{-\gamma(\alpha -n+2)} \fint_{B_{r_0}(0)\setminus B_{r_0-\delta}(0)}  \mathscr{G}_{\alpha}(u_0(x))  \dd x\right) < \infty
\end{equation}
for $\gamma \geqslant  \frac{2(s+1)}{n}$.  Let $u$ be a solution of (\ref{eq:ft}) given by Theorem~\ref{th:ex}. Then there exists a time $T_0 = T_0(n,\alpha, s, u_0)$ such that
\begin{align*}
\supp(u(\cdot,t)) \subseteq B_{r_0}(0) \quad \text{ for all } t \in [0,T_0).
\end{align*}
Moreover, the waiting time is estimated from below by
\begin{align*}
T_0 \geqslant  C \left( \sup_{\delta>0} \delta^{-\gamma(\alpha -n+2)} \fint_{B_{r_0}(0)\setminus B_{r_0-\delta}(0)} \mathscr{G}_{\alpha}(u_0(x)) \dd x \right)^{-\frac{n}{\alpha -n+2}}.
\end{align*}
\end{theorem}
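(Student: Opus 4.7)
My plan is to derive an integral inequality for the $\alpha$-entropy localized just inside the initial support by testing the local $\alpha$-entropy bound of Lemma~\ref{lm:add-000} against a scale-dependent radial cutoff, and then to close the estimate via a Stampacchia-type iteration whose exponents are dictated by the scaling $\delta^{-\gamma(\alpha-n+2)}$ in the hypothesis \eqref{G}. Fix a smooth radial cutoff $\eta_\delta$ with $\eta_\delta\equiv 0$ on $\overline{B_{r_0-\delta}(0)}$, $\eta_\delta\equiv 1$ outside $B_{r_0-\delta/2}(0)$ and $\lvert\nabla^k\eta_\delta\rvert\lesssim \delta^{-k}$ for the relevant $k$. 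For the localized entropy
\begin{equation*}
E(\delta,t):=\int_{\Omega}\mathscr{G}_\alpha(u(x,t))\,\eta_\delta(x)^q\,\d x,
\end{equation*}
the hypothesis \eqref{G} yields $E(\delta,0)\leqslant C_0 M\,\delta^{1+\gamma(\alpha-n+2)}$ for all small $\delta$, where $M$ denotes the supremum on the right-hand side of the claimed lower bound. The conclusion $\supp u(\cdot,t)\subseteq B_{r_0}(0)$ for $t\leqslant T_0$ will follow from the propagation $E(\delta,t)\leqslant 2C_0 M\,\delta^{1+\gamma(\alpha-n+2)}$ for all $t\in[0,T_0]$ and all small $\delta$, upon letting $\delta\to r_0-\lvert x\rvert$.

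\textbf{Localized entropy inequality.} Inserting $\eta_\delta^q$ into the local $\alpha$-entropy bound from Lemma~\ref{lm:add-000}, and treating the cross terms produced by $\Ds$ acting on the cutoff via the nonlocal chain rule of Lemma~\ref{lem-GG} together with the semigroup representation \eqref{eq:semigroup}--\eqref{control_kernel}, I expect a localized inequality of the form
\begin{equation*}
E(\delta,t)+c\!\int_0^t\!\!\int_\Omega\!\bigl|\Dusm\bigl(u^{\frac{\alpha+2}{2}}\eta_\delta^{q/2}\bigr)\bigr|^2\,\d x\,\d r\leqslant E(\delta,0)+C\,\delta^{-2(s+1)}\!\int_0^t\!\!\int_{A_\delta} u^{\alpha+2}\,\d x\,\d r,
\end{equation*}
where $A_\delta:=B_{r_0-\delta/2}(0)\setminus B_{r_0-\delta}(0)$. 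The annular integral on the right is then controlled via the interpolation \eqref{dong-01} and the Sobolev embeddings \eqref{emb1}--\eqref{emb3} applied to $v=u^{(\alpha+2)/2}\eta_{2\delta}^{q/2}$, which produces a factor of a power of $E(2\delta,r)$ multiplied by a suitable fraction of the dissipation term on the left, so that the latter can be absorbed. The condition $\gamma\geqslant 2(s+1)/n$ is exactly the threshold that makes the interpolation exponents compatible with matching the initial scaling $\delta^{\gamma(\alpha-n+2)}$ against the cutoff cost $\delta^{-2(s+1)}$.

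\textbf{Closing the iteration.} The two previous steps together yield an inequality of Stampacchia type relating $E(\delta,t)$ to $E(2\delta,t)$, schematically
\begin{equation*}
E(\delta,t)\leqslant C_0 M\,\delta^{1+\gamma(\alpha-n+2)}+C\,t^{\kappa}\,\delta^{-\mu}\!\sup_{0\leqslant r\leqslant t}E(2\delta,r)^{\sigma},
\end{equation*}
with explicit $\kappa,\mu>0$ and $\sigma\in(0,1)$ depending only on $n,\alpha,s,d$. Starting from the a priori global bound supplied by the global $\alpha$-entropy estimate \eqref{eq:alpha-entropy-u1} and iterating from a fixed initial scale $\delta_0$ downward along a dyadic sequence $\delta_j=2^{-j}\delta_0$, as in \cite{DalPassoGiacomelliGruen,GruenWTWS,DNLST24}, I expect the desired propagation to hold precisely on the interval $[0,T_0]$ determined by the balance of the two terms on the right. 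With the natural scaling of the problem this balance produces exactly the claimed $T_0\gtrsim M^{-n/(\alpha-n+2)}$, which, together with the scaling identity $t\mapsto \lambda^{-n}t$ under $u\mapsto\lambda u$ of the PDE, gives the announced lower bound.

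\textbf{Main obstacle.} The principal technical difficulty is the derivation of the localized entropy inequality itself: unlike in the local case $s=1$, localizing $\Dusm(u^{(\alpha+2)/2})$ by a multiplicative cutoff produces commutators that are sign-indefinite, and the nonlocal chain rule of Lemma~\ref{lem-GG} carries a remainder $\mathscr{R}_s$ whose sign cannot be controlled \emph{a priori}. Following the approximation/contradiction strategy sketched at the end of Section~\ref{sec:intro}, I would first establish the localized inequality on the regularized problem (where $u$ is smooth and bounded away from zero), using the explicit kernel representation \eqref{eq:semigroup}--\eqref{control_kernel} to rewrite the commutators as integrals against the heat kernel and exploiting the vanishing of $\eta_\delta$ inside $B_{r_0-\delta}(0)$ to absorb the short-time singularity; I would then pass to the limit in the approximation parameters using the compactness supplied by \eqref{eq:alpha-entropy-u1} and, finally, iterate in $\delta$ to close the Stampacchia argument.
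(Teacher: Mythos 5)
Your proposal shares the starting point with the paper (the local $\alpha$-entropy estimate of Lemma~\ref{lm:add-000}, derived at the approximate level and then passed to the limit), and your discussion of the principal obstacle — the sign-indefinite remainders produced by localizing $(-\Delta)^{s}$ and by the chain rule — is accurate. But the way you propose to close the argument differs from the paper in two substantive ways, and as written the closing step has a gap.

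First, the paper does not iterate the cutoff-weighted \emph{entropy} $E(\delta,t)=\int\mathscr{G}_\alpha(u)\eta_\delta^q\d x$; it iterates the annular quantity $A_T(S):=\iint_{\Omega_T(S)}u^{\alpha+2}\d x\d t$ (and its renormalization $\tilde A_T$). Inequality \eqref{wtp-inq} takes the form $\tilde A_T(S+\delta)\leqslant C\,T^{\mu}\bigl(\delta^{-2(s+1)}(\mathcal{S}\,\delta^{\sigma}+\tilde A_T(S))\bigr)^{\beta}$, and the entire waiting-time conclusion is obtained in one stroke from the \emph{inhomogeneous} Stampacchia lemma (Lemma~\ref{lem:stampacchia2}). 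The hypothesis $\gamma\geqslant\frac{2(s+1)}{n}$ is precisely what guarantees $\sigma\geqslant\frac{\alpha}{\beta-1}$, which is the structural condition that lemma requires; you mention the threshold but do not connect it to the exponent constraint that makes the iteration work, and you do not invoke the inhomogeneous Stampacchia lemma at all.

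Second, and more importantly, the exponent in your schematic recursion is wrong in a way that prevents the iteration from closing. You claim $E(\delta,t)\leqslant C_0M\delta^{p}+C\,t^{\kappa}\delta^{-\mu}\sup_r E(2\delta,r)^{\sigma}$ with $p=1+\gamma(\alpha-n+2)$, $\mu>0$ and $\sigma\in(0,1)$, iterated downward on $\delta_j=2^{-j}\delta_0$. Writing $N_j:=E(\delta_j)/\delta_j^{p}$ one gets $N_{j+1}\leqslant C_0M+C'\,t^{\kappa}\delta_{j+1}^{-\mu-(1-\sigma)p}N_j^{\sigma}$, and since $\mu+(1-\sigma)p>0$ the coefficient blows up as $\delta_j\to0$: the sublinear gain $\sigma<1$ cannot absorb the cutoff cost $\delta^{-\mu}$ and no uniform bound $E(\delta_j,t)\lesssim\delta_j^{p}$ can be propagated. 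In fact the exponent should be \emph{superlinear}: under the paper's standing assumption $s>\frac{(d-2)_+}{2}$, hence $2(s+1)>d$, the Gagliardo--Nirenberg step relating $\iint u^{\alpha+2}$ to the entropy produces $\tfrac{(1-\theta)(\alpha+2)}{\alpha-n+2}>1$, and the overall exponent is $\beta=1+\tfrac{n(1-\theta)}{\alpha-n+2}>1$ — superlinearity is exactly what allows Stampacchia's lemma to yield the vanishing $\tilde A_{T_0}(r_0)=0$ in finite $T_0$. With a sublinear exponent one only gets polynomial decay, not the compact support conclusion. Finally, the appeal to a scaling identity $t\mapsto\lambda^{-n}t$ to extract the explicit lower bound $T_0\gtrsim M^{-n/(\alpha-n+2)}$ is a heuristic and is not carried out; in the paper this bound falls out directly from the admissibility condition \eqref{stamglem:h22} of Lemma~\ref{lem:stampacchia2}.
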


As noted in the Introduction, a key ingredient in the proof of the 
$\alpha$-entropy estimates is the following nonlocal chain rule that we prove for Neumann boundary conditions but holds 
also for Dirichlet. We refer the reader 
to Section~\ref{sec:fractional} for the relevant notation.

\begin{lemma}[Nonlocal Chain Rule]
\label{lem-GG}
Let us given a bounded and regular domain $\Omega \subset \mathbb{R}^d$ and a function
$ \phi \in C^{2}(\mathbb{R})$. %  is convex.

For any function $ u \in H_N^{2\mu}( \Omega )$ and $\mu\in (0,2)$ there holds
\begin{itemize}
\item[(i)]  If  $\mu \in (0,1)$ then
\begin{equation}\label{gg-0-01}
(-\Delta)^{\mu} \phi(u(x)) = \phi'(u(x))(-\Delta)^{\mu} u (x) -  \mathcal{I}_\mu [u](x),
\end{equation}
where
$$
\mathcal{I}_\mu[u](x) := -  \tfrac{1}{ \Gamma(-\mu)} \int_0^{+\infty} { \Bigl(
\int_{\Omega} { K(x,y,t) \int_{u(x)}^{u(y)} {  \phi''(z)   (u(y) - z) \,dz} dy}   \Bigr) \tfrac{dt}{t^{1+\mu}}},
$$
and $\mathcal{I}_\mu [u](x) \geqslant 0$   if $\phi$   is convex.
\item[(ii)] If $\mu = 1$ then
$$
 (-\Delta)  \phi(u(x)) = \phi'(u(x))(-\Delta) u (x) - \phi''(u(x)) |\nabla u(x)|^2.
$$
\item[(iii)] If $\mu \in (1,2) $ then
\begin{equation}\label{gg-0-01-000}
(-\Delta)^{\mu} \phi(u(x)) = \phi'(u(x))(-\Delta)^{\mu} u (x)  -    \mathcal{J}_\mu [u](x),
\end{equation}
where
$$
\mathcal{J}_{\mu } [u](x) := \mathcal{I}_{\mu } [u](x) - \phi''(u(x)) |\nabla u(x)|^2  \tfrac{1}{ \Gamma(1-\mu)}  \int_0^{+\infty} { \Bigl( \int_{\Omega} {  K(x,y,t) dy}   \Bigr) \tfrac{dt}{t^{\mu}}}.
$$
\end{itemize}
\end{lemma}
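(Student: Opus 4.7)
For part (i), I would start from the heat-semigroup representation \eqref{eq:semigroup} applied to $\phi(u)$. Using $\int_\Omega K(x,y,t)\,\d y \equiv 1$ to center on $\phi(u(x))$ and inserting Taylor's formula with integral remainder,
\[
\phi(u(y))-\phi(u(x)) = \phi'(u(x))(u(y)-u(x)) + \int_{u(x)}^{u(y)}\phi''(z)(u(y)-z)\,\d z,
\]
the linear piece reassembles, after integration against $K(x,y,t)$ in $y$, into $\phi'(u(x))[e^{t\Delta}u(x)-u(x)]$, which upon outer integration against $\Gamma(-\mu)^{-1}t^{-1-\mu}\,\d t$ yields exactly $\phi'(u(x))(-\Delta)^\mu u(x)$ (by \eqref{eq:semigroup} applied to $u$). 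The quadratic remainder becomes $-\mathcal{I}_\mu[u](x)$; the minus sign is consistent because $\Gamma(-\mu)<0$ on $(0,1)$. For the positivity claim under convexity, I would directly inspect $\int_{u(x)}^{u(y)}\phi''(z)(u(y)-z)\,\d z$: when $u(y)>u(x)$ the integrand is nonnegative on $[u(x),u(y)]$; when $u(y)<u(x)$, flipping the bounds turns it into the integral of a nonpositive integrand on $[u(y),u(x)]$, so the original is again $\geqslant 0$. Combined with $-1/\Gamma(-\mu)>0$, this gives $\mathcal{I}_\mu[u]\geqslant 0$.

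Part (ii) is just the classical pointwise identity $\Delta\phi(u) = \phi'(u)\Delta u + \phi''(u)|\nabla u|^2$. For part (iii) the representation \eqref{eq:semigroup} no longer converges, so I would use the Balakrishnan-type extension
\[
(-\Delta)^\mu\psi(x) = \frac{1}{\Gamma(-\mu)}\int_0^{+\infty}\bigl[e^{t\Delta}\psi(x)-\psi(x)-t\,\Delta\psi(x)\bigr]\frac{\d t}{t^{1+\mu}},
\]
valid for $\mu\in(1,2)$ on $H^{2\mu}_N(\Omega)$ (now $\Gamma(-\mu)>0$, and the subtracted $-t\Delta\psi$ provides the extra $O(t^2)$ cancellation at $t=0$ needed for convergence; the derivation is the spectral analogue of two integrations by parts in $t$). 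Applying this to $\psi=\phi(u)$ and subtracting $\phi'(u(x))$ times the same identity for $\psi=u$, the $e^{t\Delta}$-pieces are treated by the Taylor argument from (i) and generate the $\mathcal{I}_\mu$ contribution, while the Laplacian correction $-t\bigl[\Delta\phi(u)-\phi'(u)\Delta u\bigr]=-t\,\phi''(u)|\nabla u|^2$ (by part (ii)) produces the additional term defining $\mathcal{J}_\mu[u]$. Collecting constants via $\Gamma(1-\mu) = -\mu\,\Gamma(-\mu)$ matches the stated formula.

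The main technical difficulties are twofold. First, justifying Fubini to interchange the $t$- and $y$-integrals: this follows from the Gaussian bounds \eqref{control_kernel} (giving moment estimates $\int_\Omega K(x,y,t)|y-x|^k\,\d y\lesssim t^{k/2}$), combined with local boundedness of $\phi'$ and $\phi''$ along $u$ and the assumed regularity $u\in H^{2\mu}_N(\Omega)$, which via \eqref{charL2} and Sobolev embedding controls $u$, $\nabla u$ and $(-\Delta)^\mu u$ in the relevant $L^p$-norms, providing domination at both small and large $t$. Second, and more delicately in part (iii), the splitting of the combined convergent integrand into the two individually $t^{-\mu}$-singular summands of $\mathcal{J}_\mu$ should be interpreted through a cut-off $\int_\varepsilon^\infty$ with subsequent passage to the limit $\varepsilon\downarrow 0$: the divergent $\varepsilon^{1-\mu}$ contributions from the two pieces cancel thanks to the small-$t$ balance $\int_\Omega K(x,y,t)\int_{u(x)}^{u(y)}\phi''(z)(u(y)-z)\,\d z\,\d y = t\,\phi''(u(x))|\nabla u(x)|^2 + O(t^2)$, which is precisely the second-order heat-kernel expansion of $\phi(u)$ about $x$.
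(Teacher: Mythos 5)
Parts (i) and (ii) of your proposal coincide with the paper's argument: the heat--semigroup representation \eqref{eq:semigroup}, the Taylor expansion with integral remainder, the reassembly of the linear piece into $\phi'(u)(-\Delta)^\mu u$, and the convexity check on $\int_{u(x)}^{u(y)}\phi''(z)(u(y)-z)\,\d z$ are exactly the paper's route.

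For part (iii) you take a genuinely different route. The paper factors $(-\Delta)^\mu = (-\Delta)\circ(-\Delta)^{\mu-1}$, applies part (i) to $(-\Delta)^{\mu-1}\phi(u)$ (legitimate since $\mu-1\in(0,1)$), and then computes $(-\Delta)\mathcal{I}_{\mu-1}[u]$ by a Leibniz expansion of $(-\Delta)_x[K(x,y,t)\,g_x(y)]$ followed by an integration by parts in $t$ that uses the heat equation $(-\Delta)_xK=-K_t$. Your route, via the subtracted (Balakrishnan) semigroup formula valid on $(1,2)$, avoids the composition altogether and directly yields the single absolutely convergent integral
\[
(-\Delta)^\mu\phi(u)-\phi'(u)(-\Delta)^\mu u
=\tfrac{1}{\Gamma(-\mu)}\int_0^{+\infty}\Bigl[\int_\Omega K(x,y,t)\int_{u(x)}^{u(y)}\phi''(z)(u(y)-z)\,\d z\,\d y - t\,\phi''(u(x))|\nabla u(x)|^2\Bigr]\tfrac{\d t}{t^{1+\mu}},
\]
which is conceptually cleaner: in the paper's statement both pieces of $\mathcal{J}_\mu$ are individually divergent (the inner integral behaves like $t\,\phi''|\nabla u|^2$ near $t=0$, so $\mathcal{I}_\mu$ has a non-integrable $t^{-\mu}$ singularity for $\mu>1$, and $\int_0^{+\infty}t^{-\mu}\,\d t$ diverges as well), and the paper's integration-by-parts in $I_1$ silently drops a divergent boundary term. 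Your remark about the cutoff interpretation and the cancellation at order $t^{1-\mu}$ correctly flags this subtlety.

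One point is not quite right: you write ``Collecting constants via $\Gamma(1-\mu)=-\mu\Gamma(-\mu)$ matches the stated formula.'' It does not. Splitting your combined integral formally gives a $|\nabla u|^2$-coefficient $1/\Gamma(-\mu)$, whereas the paper's $\mathcal{J}_\mu$ carries $1/\Gamma(1-\mu)=-1/(\mu\Gamma(-\mu))$; these differ by a factor $-\mu$. Interpreted with a symmetric cutoff $\int_\varepsilon^{+\infty}$, the paper's written expression for $\mathcal{J}_\mu$ therefore still carries a residual $\varepsilon^{1-\mu}$ divergence, which is exactly the boundary term suppressed in the paper's $I_1$. Your derivation in fact produces the correct, cutoff-free formula for $\mathcal{J}_\mu$ --- which is a small improvement over the paper's formal statement --- so the substance is sound; only the final claim of literal coefficient matching should be removed or replaced by the observation that the two formal splittings express the same finite quantity once the boundary term in the paper's IBP is restored.
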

We postpone the proof of   Lemma~\ref{lem-GG} to the Appendix. When $\phi:\R\to \R$ is $\phi(v) = v^2$ (the original Cordoba \& Cordoba \cite{CC} result),
 we have 
\begin{corollary}
\label{cor:CC}
Under the assumption of Lemma \ref{lem-GG}, if $\mu \in (1,2)$ and $\phi(v) = v^2$, we have
\begin{equation}
\label{eq:lapv2}
\begin{split}
(-\Delta)^{\mu} v^2(x) &= 2v(x) (-\Delta)^{\mu} v (x) +   \tfrac{2 |\nabla v(x)|^2 }{ \Gamma(1-\mu)}  \int _0^{+\infty} { \left( \int_{\Omega} {  K(x,y,t) dy}   \right) \tfrac{dt}{t^{\mu}}} \\     
&+\tfrac{1}{ \Gamma(-\mu)} \int_0^{+\infty} { \left(
\int_{\Omega} { K(x,y,t) (v(y) - v(x))^2 dy}   \right) \tfrac{dt}{t^{1+\mu}}},   
\end{split}
\end{equation}
and 
 \begin{equation}\label{ps-new}
 \begin{split}
\int_{\Omega} {  \abs{(-\Delta)^{\frac{\mu}{2}} v }^2 \, dx   } &=  - \tfrac{1}{ \Gamma(1-\mu)}
\int_{\Omega}{ |\nabla v(x)|^2 \int_0^{+\infty} { \left( \int_{\Omega} {  K(x,y,t) dy}   \right) \tfrac{dt}{t^{\mu}}} dx}  \\
&\,\,-\tfrac{1}{2 \Gamma(-\mu)} \int_{\Omega}{ \int_0^{+\infty} { \left(
\int_{\Omega} { K(x,y,t) (v(y) - v(x))^2 dy}   \right) \tfrac{dt}{t^{1+\mu}}} dx}.
 \end{split}
 \end{equation}
\end{corollary}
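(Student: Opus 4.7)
The plan is to derive both identities as direct specializations of Lemma~\ref{lem-GG} with the concrete choice $\phi(v)=v^2$, for which $\phi'(v)=2v$ and $\phi''(v)\equiv 2$. First, I would obtain the pointwise formula \eqref{eq:lapv2} from part (iii) of Lemma~\ref{lem-GG} by an explicit evaluation of the inner integral that enters $\mathcal{I}_\mu[v](x)$. With $\phi''\equiv 2$ one simply has
\[
\int_{v(x)}^{v(y)} \phi''(z)(v(y)-z)\,\d z = 2\int_{v(x)}^{v(y)} (v(y)-z)\,\d z = (v(y)-v(x))^2,
\]
so $\mathcal{I}_\mu[v](x)$ collapses to the squared-difference term appearing on the last line of \eqref{eq:lapv2}. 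Substituting this, together with $\phi''(v)=2$, into the definition of $\mathcal{J}_\mu[v](x)$ from Lemma~\ref{lem-GG}(iii), inserting the result into \eqref{gg-0-01-000}, and rearranging the sign conventions produces \eqref{eq:lapv2} directly.

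Next, the integrated identity \eqref{ps-new} follows by integrating \eqref{eq:lapv2} over $\Omega$ and combining two observations. The spectral definition \eqref{rlap} together with $\int_\Omega \phi_k(x)\,\d x=0$ for $k\geqslant 1$ (and $\lambda_0=0$) forces
\[
\int_\Omega (-\Delta)^{\mu} v^2(x)\,\d x = \sum_{k=0}^{+\infty} \lambda_k^{\mu}\,c_k(v^2)\int_\Omega \phi_k(x)\,\d x = 0,
\]
so integration of \eqref{eq:lapv2} leaves only the term $2\int_\Omega v(-\Delta)^\mu v\,\d x$ to be balanced against the two remainder integrals. Applying Proposition~\ref{lem:ip} with $r_1=r_2=\mu/2$ rewrites
\[
\int_\Omega v(x)(-\Delta)^\mu v(x)\,\d x = \int_\Omega \abs{(-\Delta)^{\mu/2} v(x)}^2\,\d x,
\]
and dividing by $2$ yields exactly \eqref{ps-new}.

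The only technical point to verify is the Fubini exchange between the $x$-integration on $\Omega$ and the $(y,t)$-integrations hidden inside $\mathcal{I}_\mu[v]$ and the gradient remainder. Under the standing regularity $v\in H^{2\mu}_N(\Omega)$ inherited from Lemma~\ref{lem-GG}, the Gaussian bounds \eqref{control_kernel} together with the normalization $\int_\Omega K(x,y,t)\,\d y = 1$ and the smoothness of $v$ near the diagonal provide the absolute integrability required to swap the order of integration. Overall I do not expect a serious obstacle: the entire corollary is essentially a mechanical bookkeeping exercise once Lemma~\ref{lem-GG} is in hand, its principal value being to make the Cordoba--Cordoba-type identity visible, in particular, in the higher-order regime $\mu\in(1,2)$.
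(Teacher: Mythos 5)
Your argument is correct and is essentially identical to the paper's one-line justification: specialize Lemma~\ref{lem-GG}(iii) to $\phi(v)=v^2$ (for which the inner integral $\int_{v(x)}^{v(y)}\phi''(z)(v(y)-z)\,\d z$ evaluates to $(v(y)-v(x))^2$), and then integrate over $\Omega$ using $\int_\Omega(-\Delta)^\mu v^2\,\d x=0$ (which, as you correctly note, follows from the spectral definition together with $\lambda_0=0$ and $\int_\Omega\phi_k=0$ for $k\geqslant 1$, equivalently from the Neumann boundary conditions) and $\int_\Omega v(-\Delta)^\mu v\,\d x=\|(-\Delta)^{\mu/2}v\|_{L^2}^2$ via Proposition~\ref{lem:ip}.
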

Once Lemma~\ref{lem-GG} is established, the proof of \eqref{eq:lapv2} follows immediately. 
The identity \eqref{ps-new} then follows by integration, noting that $\int_\Omega(-\Delta )^{\mu}v^2(x)\d x= 0$,
due to the homogeneous Neumann boundary conditions satisfied by $v$.

%Note that the heat kernel $K$ is positive. Therefore \eqref{eq:lapv2} gives that if $\mu\in (1,2)$,
%\[
%(-\Delta)^{\mu} v^2(x) \ge  2v(x) (-\Delta)^{\mu} v (x)\qquad \textrm{ a.e. in }\Omega,
%\]
%namely the opposite inequality of the Cordoba \& Cordoba inequality \cite{CC} valid for the fractional laplacian $(-\Delta)^s$
%on $\R^d$ for $s\in (0,1)$. {\color{blue}($\Gamma(1-\mu) > 0$ and $\Gamma(-\mu) < 0$ if $\mu \in (1,2)$) }

\section{Existence of $\alpha$-entropy solutions}

\subsection{A regularized problem}

The core of our results relies on a set of energy and entropy estimates, whose 
formal derivation is presented in this section. These estimates can be rigorously 
justified via the approximation scheme introduced in \cite{DNLST24}, and are therefore 
valid only for those weak solutions that arise as limits of this scheme. Since uniqueness 
of weak solutions is not known, we do not claim that the estimates hold for all possible weak solutions.
%The argument in \cite[Proposition 2.2]{DPGG98SIAM} essentially runs as follows.
%Given $u$ a weak solution of the local version of \eqref{eq:ft}, one introduces a suitable approximation of $\mathscr{G}'_{\alpha}(u)$
%in such a way that it belongs to the class of the test functions for the weak formulation. This is possible because the weak solutions and the test functions have, at least in the Sobolev scale, the same regularity.
%In the nonlocal framework, our weak solutions are in $H^{1+s}(\Omega)$ (w.r.t. space), at most. On the contrary the test functions are required to have $\Delta v$ in $L^2$. Therefore, the justification of the entropy estimate requires a different strategy.
Let $s\in \left(\frac{(d-2)_+}{2},1\right)$ and $n\in \left( \tfrac{2(s+1)}{ 4(s+1)-d }, \tfrac{d + 2(1-s)}{(d-2s)_+} +\tfrac{1}{2}\right)$. 
For parameters $s$ and $n$ in the above range, and for initial data 
$u_0$ satisfying
\begin{equation}\label{eq:hyp_u0_ex-0}
u_0\in H^s(\Omega), \,\,\, u_0\geqslant 0, \,\,\, \int_{\Omega}\mathscr{G}(u_0(x))\d x<+\infty,
\end{equation}
as in \cite{DNLST24}, our goal is to construct non-negative weak solutions to \eqref{eq:ft} as limits of the approximate solutions
$u_{\eps,\delta,\gamma}$ to the corresponding regularized problem:
\begin{equation}
\label{eq:approx_old}
\begin{cases}
\partial_t u_{\eps,\delta,\gamma} = \div(m_{\eps,\delta,\gamma}(u_{\eps,\delta,\gamma})\nabla p_{\eps,\delta,\gamma}(x,t)), & (x,t) \in  \Omega \times (0,+\infty),\\
p_{\eps,\delta,\gamma}(x,t)= \Ds u_{\eps,\delta,\gamma}(x,t), &  (x,t) \in \Omega \times (0,+\infty),\\
\nabla u_{\eps,\delta,\gamma}\cdot {\hat{\bf{n}}} = m_{\eps,\delta,\gamma}(u_{\eps,\delta,\gamma})  \nabla p_{\eps,\delta,\gamma}(x,t) \cdot{\hat{\bf{n}}}=0, & (x,t) \in \partial\Omega \times (0,+\infty),\\
u(x,0) =u_{0,\eps,\delta,\gamma}(x), & x \in \Omega,
\end{cases}
\end{equation}
with the approximated mobility:
\begin{equation}
\label{eq:approx_mobility}
m_{\eps,\delta,\gamma}(z) =  m_{\eps,\delta}(z) + \gamma  :=
\begin{cases}
 \frac{z^{n+\beta}}{z^{\beta}+ \eps z^{n}+ \delta z^{n+\beta}}+\gamma, &\qquad z\in [0,+\infty),\\
 \gamma, &\qquad z\leqslant 0,
 \end{cases}
\end{equation}
where $\beta>\max\left\{n, \alpha + 2 \right\}$. 
The parameters $\eps,\, \delta,\, \gamma >0$ are introduced to regularize the degenerate mobility 
and to ensure well-posedness of the approximated problem (\ref{eq:approx_old}).
The parameter $\gamma$ guarantees strict positivity of the mobility,
$$
m_{\eps,\delta,\gamma}(z) \geqslant \gamma \text{ for all } z \in \mathbb{R},
$$
thus removing degeneracy and yielding a uniformly parabolic equation. This allows the use of standard existence 
results and a priori estimates. The parameter $\eps$ regularizes the behaviour of the mobility near $z=0$. Indeed, for small positive values of 
$z$, the term $\eps z^n$ in the denominator of (\ref{eq:approx_mobility}) prevents singular behaviour and ensures 
smoothness of $m_{\eps,\delta }(z)$ at the origin, while preserving the qualitative degeneracy structure of the 
original mobility in the limit $\eps \to 0$. The parameter $\delta$ controls the growth of the mobility for large values of $z$. 
In particular, it provides a uniform upper bound on $m_{\eps,\delta }(z)$, which is essential for obtaining global-in-time 
estimates and compactness. The artificial saturation introduced by $\delta$ is removed by letting $\delta \to 0$.
The assumption $\beta>\max\left\{n, \alpha + 2 \right\}$ ensures sufficient integrability and coercivity properties 
needed to derive uniform energy and entropy estimates and to pass to the limit in the nonlinear terms. The original 
degenerate problem is then recovered by letting $\gamma \to 0$, $\eps \to 0$, and $\delta \to 0$  in a suitable order.

%
%{\color{red}The nested approximation we are considering is classical in the literature of the thin film equation. In particular note that the approximate mobility turns out to be well separated from zero and bounded at infinity. As we will see later we first consider the limit as $\gamma\to 0$. This will produce an approximate solution $u_{\eps, \delta}$. Thanks to the choice of the approximating mobility (close to $0$ $m_{\eps, \delta}\approx z^\beta$, $\beta>n$) we will have that $u_{\eps,\delta}>0$. }

We consider an approximation of the non-negative initial data $u_{0}$, chosen such that
\begin{align}
&u_{0, \eps,\delta}\in H^{s+1}(\Omega), && u_{0, \eps,\delta} \geqslant u_{0}+\varepsilon^{\theta_{1}}+\delta^{\theta_{2}}  && \text { for some } 0<\theta_{1}< \tfrac{1}{\beta - \alpha -2},\  \theta_{2}>0, \label{H1p} \\
&u_{0, \eps,\delta,\gamma} \in H^{s+1}(\Omega), && u_{0, \eps,\delta,\gamma} \to u_{0,\eps,\delta} && \text { strongly in } H^{1}(\Omega)  \text { as } \gamma \rightarrow 0,\label{H1c} \\
&{u_{0, \eps,\delta} \in H^{s+1}(\Omega)}, &&  u_{0, \eps,\delta} \to u_{0,\delta} && \text { strongly in } H^{s}(\Omega)  \text { as } \eps  \rightarrow 0, \label{Hsc-0}\\
%& {}  && \eps \| u_{0,\eps,\delta}\|^{-\beta}_{H^{s+1}(\Omega)} \to +\infty  && \text { as } \eps  \rightarrow 0, \nonumber \\
&{} &&u_{0,\delta} \to u_{0} && \text { strongly in } H^{s}(\Omega)  \text { as } \delta \rightarrow 0, \label{Hsc}
\end{align}
where the parameters \(\varepsilon\) and \(\delta\) are used to lift the initial data to be strictly positive even if \(\gamma=0\).
Therefore, for any $u_0$ satisfying \eqref{eq:hyp_u0_ex} we introduce the following set 
\begin{equation}
\mathscr{A}(u_0):=\left\{u:\Omega\times (0,+\infty)\to \R: \,\,u = \lim_{(\gamma, \eps,\delta)\to (0,0,0)}u_{\eps,\delta,\gamma},\qquad u\textrm{ weak solution of }\eqref{eq:ft}\right\},
\end{equation}
where the notion of weak solvability and the topology in which the approximation is removed are the ones in \cite{DNLST24}.
Note that if $n\in [1,\frac{d+2(1-s)}{(d-2s)_+})$, then this set is non-empty as proved in \cite{DNLST24}.

\subsection{Approximated $\alpha$-entropy}
%{\color{red}Here I propose to call $\mathscr{G}_{\alpha}$ the real (limit) $\alpha$ entropy. Then the approximated entropy is called $\mathscr{G}_{\alpha}^{\eps,\delta}$. The notation 
%$\mathscr{G}_{\alpha}^{0,0}$ is a bit too heavy. I try to modify here and in the sequel. Check!!}

We also introduce the regularized 
$\alpha$-entropy $\mathscr{G}^{\eps,\delta}_{\alpha}$, defined as a family of smooth, non-negative functions
\begin{equation}\label{eq:alpha-entropy-approx}
 \mathscr{G}^{\eps,\delta}_{\alpha}(z) = \mathscr{G}_{\alpha}(z) + \tfrac{\eps z^{\alpha - \beta + 2}}{(\alpha - \beta +2)(\alpha -\beta +1)} + \tfrac{\delta z^{\alpha+2}}{(\alpha+2)(\alpha +1)}, \text{ where } \beta > \alpha +2, \ \alpha > -1.
\end{equation}
Note that $\mathscr{G}_\alpha^{\eps,\delta}$ is designed in such a way that
\[
(\mathscr{G}_{\alpha}^{\eps,\delta})''(z)m_{\eps,\delta}(z) = z^{\alpha}  \qquad \forall\, z>0.
\]
Moreover,
\[
\begin{split}
& m'_{\eps,\delta}(z) = \tfrac{z^{n+\beta -1}( n z^{\beta} + \eps \beta z^n )}{ ( z^{\beta} + \eps  z^n + \delta z^{n+\beta})^2 } \leqslant
\tfrac{\beta z^{n+\beta -1}}{z^{\beta} + \eps  z^n + \delta z^{n+\beta}}, \\
& (\mathscr{G}_{\alpha}^{\eps,\delta})'(z)=  \mathscr{G}_{\alpha}'(z) + \tfrac{\eps z^{\alpha -\beta +1}}{\alpha - \beta +1}   +
  \tfrac{\delta z^{\alpha  +1}}{\alpha +1}  ,
\end{split}
\]
whence if $\alpha\in \R\setminus \left\{n-1,n-2\right\}$
\begin{equation}
\label{eq:bound_entropy_1}
\abs{\tfrac{m_{\eps,\delta}(z) \mathscr{G}'_{\alpha}(z)}{z^{\alpha+1}} } \leqslant \tfrac{1}{|\alpha -n + 1| }
+ \tfrac{1}{|\alpha - \beta + 1| } + \tfrac{1}{|\alpha  + 1| } + \tfrac{A^{\alpha - n + 1}}{|\alpha -n + 1| } z^{n-\alpha - 1},
\end{equation}
and
\begin{equation}
\label{eq:bound_entropy11}
\abs{\tfrac{m_{\eps,\delta}(z) \mathscr{G}'_{\alpha}(z)}{z^{\alpha+1}}} \leqslant
\begin{cases}
\tfrac{1}{ \beta  -n } + \tfrac{1}{n} +  \abs{\ln(z)} \,\,\,  &\textrm{ if } \alpha = n-1,\\
 \tfrac{1}{|n - \beta - 1| } + \tfrac{1}{|n- 1| } +  1 + z \,\,\, & \textrm{ if } \alpha = n-2,\\
 \end{cases}
\end{equation}
%$$
%| \tfrac{m_{\eps,\delta}(z) \mathscr{G}'_{\alpha}(z)}{z^{\alpha+1}} | \leqslant
%  \tfrac{1}{|n - \beta - 1| } + \tfrac{1}{|n- 1| } +  1 + z  \text{ if } \alpha = n-2,
%$$
Finally, if $\alpha\in \R\setminus \left\{n-1,n-2\right\}$
\begin{equation}
\label{eq:bound_entropy2}
\abs{\tfrac{m'_{\eps,\delta}(z) \mathscr{G}'_{\alpha}(z)}{z^{\alpha}}} \leqslant \beta \left( \tfrac{1}{|\alpha -n + 1| }
+ \tfrac{1}{|\alpha - \beta + 1| } + \tfrac{1}{|\alpha  + 1| } + \tfrac{A^{\alpha - n + 1}}{|\alpha -n + 1| } z^{n-\alpha - 1} \right),
\end{equation}
and
\begin{equation}
\label{eq:bound_energy21}
\abs{ \tfrac{m'_{\eps,\delta}(z) \mathscr{G}'_{\alpha}(z)}{z^{\alpha}}} \leqslant
\begin{cases}
 \beta \left(
 \tfrac{1}{  \beta -n } + \tfrac{1}{n } + | \ln(z)| \right)  \,\, &\textrm{ if } \alpha = n-1,\\
%\abs{\tfrac{m'_{\eps,\delta}(z) \mathscr{G}'_{\alpha}(z)}{z^{\alpha}}} \leqslant
 \beta \left(
  \tfrac{1}{|n - \beta - 1| } + \tfrac{1}{|n- 1| } + 1 + z \right)  \,\, &\textrm{ if } \alpha = n-2.
  \end{cases}
\end{equation}
In the above displays, $A =0$ if $\alpha > n -1$ while $A = 1$ if $\alpha = n-1$ or $\alpha =n-2$.

%{\color{red}This part can be omitted}Outline of proof:
%
%(i) taking $\gamma \to 0$, we obtain the solution $u_{\eps,\delta} > 0$ for all $x \in \bar{\Omega}$ and a.e. $t \in (0,T)$
%(see \cite[(4.38)]{DNLST24});
%
%(ii) we apply the entropy $\mathscr{G}_{\alpha}(z)$ to $u_{\eps,\delta} > 0$, i.e.
%$$
%\mathscr{G}''_{\alpha}(z) m_{\eps,\delta}(z) = z^{\alpha} \text{ for } z > 0.
%$$

%\begin{lemma}
%\label{lemma:convergence_entropy}
%Behavior of $\mathscr{G}^{\eps,\delta,\gamma}$ when $(\eps,\delta,\gamma)\to 0$.
%\end{lemma}

%\begin{remark}
%{\color{red}Is this needed?}
%The construction of the $\alpha$-entropy in \cite{DPGG98SIAM} goes as follows.
%For $\alpha'\in \R$
%one sets
%$$
%G_{\alpha'}(t):=\int_{A}^t\left(\int_{A}^s \frac{\tau^{\alpha'+n-1}}{m(\tau)}\d \tau\right)\d s, \qquad A>0.
%$$
%Thus, choosing $m(\tau) = \tau^n$ and $\alpha = \alpha'+ n -1$ one obtains
%$$
%G_{\alpha'}(t) = \frac{t^{\alpha - n +2}}{(\alpha -n+ 2)(\alpha -n+1)} - \frac{1}{\alpha'(\alpha'+1)}\left(A^{\alpha'+1}-tA^{\alpha'}\right).
%$$
%The relation between $\alpha'$ and $\alpha$ is
%% estimates derived by Dal Passo, Garcke \& Grunn in \cite{DPGG98SIAM} there holds
%\begin{equation}
%\label{eq:relation}
%\alpha = \alpha'+ n -1.
%\end{equation}
%\end{remark}

\begin{definition}\label{def-pos}
We say that a couple $(u_{\eps,\delta}, p_{\eps,\delta})$ is a weak solution of (\ref{eq:approx_old}) with $\gamma =0$ if
$u_{\eps,\delta} > 0$ for all $x \in \bar \Omega$ and a.\,e. $t \in [0,T]$,
\[
\begin{aligned}
&(u_{\eps,\delta}, p_{\eps,\delta} ) \in \left(L^{\infty}((0,T); \dot{H}^s(\Omega)) \cap L^{2}((0,T); H_N^{2s+1}(\Omega))\right)
 \times L^{2}((0,T); H^{1}(\Omega)),\\
 &  \partial_t u_{\eps,\delta} \in L^2((0,T); (H^{1}(\Omega))^*),\\
&\lim_{t\to 0^+}u_{\eps,\delta}(\cdot,t) = u_{0,\eps,\delta}(\cdot) \qquad \textrm{ a.\,e.~in } \Omega,
\end{aligned}
\]
and
\begin{equation}\label{apr-001}
\begin{cases}
\displaystyle\int_{0}^T  \langle \partial_t u_{\eps,\delta} , v \rangle_{(H^1)^{*}, H^1} \, \d t & {}
\\ \displaystyle \qquad = -  \iint_{\Omega_T}
  m_{\eps,\delta}(u_{\eps,\delta} )  \nabla p_{\eps,\delta} \cdot  \nabla v  \,\d x\, \d t & \text{for all } v \in L^{2}((0,T); H^{1}(\Omega)),\\
  \displaystyle p_{\eps,\delta} = (-\Delta)^s u_{\eps,\delta} & \textrm{ a.\,e.~in }
   \Omega_T,
   \end{cases}
\end{equation}
\begin{equation}
\label{eq:energy_esti2}
	\norm{u_{\eps,\delta}(\cdot,t)}^{2}_{\dot{H}^s(\Omega)}
	+ 2 \iint_{\Omega_t}  m_{\eps,\delta}(u_{\eps,\delta}) \abs{\nabla p_{\eps,\delta}}^2 \,\d x \,\d \tau
	\leqslant   \norm{u_{0,\eps,\delta}}^2_{\dot{H}^s(\Omega)}
\end{equation}
for all  $t\in [0,T]$.
\end{definition}

 In the paper \cite{DNLST24}, we proved that in the running assumptions for the initial condition $u_0$ (i.e. \eqref{eq:hyp_u0_ex-0}), there exists a weak solution $(u_{\eps,\delta},p_{\eps,\delta})$ that is obtained as limit when $\gamma\to 0$, in a proper topology, of a solution $u_{\eps,\delta,\gamma}$ of \eqref{eq:approx_old}. 
As a consequence, the following set is non-empty
\begin{equation}
\mathscr{A}_{\eps,\delta}(u_0):=\left\{u:\Omega\times (0,+\infty)\to \R: \,\,u =u_{\eps,\delta} = \lim_{\gamma\to 0}u_{\eps,\delta,\gamma},\, u\textrm{ as in Definition \ref{def-pos}}\right\}.
\end{equation}
We also recall that solutions in the class $\mathscr{A}_{\eps,\delta}(u_0)$ satisfy the following estimates,
 which we summarize in the proposition below:
\begin{proposition}
\label{prop:estimates_eps_delta}
Assume that $n\in (  \tfrac{2(s+1)}{ 4(s+1)-d }, \tfrac{d + 2(1-s)}{(d-2s)_+} +\tfrac{1}{2})$ and let $u_0$ verify \eqref{eq:hyp_u0_ex}. Then there exists a positive constant $C$ independent of $\eps$ and $\delta$ such that any element of $ \mathscr{A}_{\eps,\delta}(u_0)$ verify 
\begin{equation}
\label{eq:est_old}
\norm{\partial_t u_{\eps,\delta}}_{L^2(0,T;(W^{1,q}(\Omega))^*)} + \norm{u_{\eps,\delta}}_{L^2(0,T;H^{s+1}(\Omega))} + \norm{p_{\eps,\delta}}_{L^2(0,T;H^{1-s}(\Omega))}\leqslant C,
\end{equation}
where $q:= \frac{4d}{2d - n (d-2s)_+} \geqslant 2$.
\end{proposition}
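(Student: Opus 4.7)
The plan is to derive each of the three estimates from the two fundamental a priori inequalities available for an element of $\mathscr{A}_{\eps,\delta}(u_0)$: the energy inequality \eqref{eq:energy_esti2}, which already yields $u_{\eps,\delta}\in L^\infty(0,T;\dot H^s(\Omega))$ and $\sqrt{m_{\eps,\delta}(u_{\eps,\delta})}\,\nabla p_{\eps,\delta}\in L^2(\Omega_T)$ uniformly in $(\eps,\delta)$, and a regularized entropy inequality obtained by testing the weak formulation \eqref{apr-001} against $(\mathscr{G}_0^{\eps,\delta})'(u_{\eps,\delta})$, where $\mathscr{G}_0^{\eps,\delta}$ is \eqref{eq:alpha-entropy-approx} with $\alpha=0$.

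\textbf{Step 1 (the $H^{s+1}$ estimate).} By construction $(\mathscr{G}_0^{\eps,\delta})''(z)\,m_{\eps,\delta}(z)\equiv 1$ for $z>0$, so the chain rule collapses the test to
\[
\tfrac{d}{dt}\!\int_\Omega \mathscr{G}_0^{\eps,\delta}(u_{\eps,\delta})\,\d x + \int_\Omega \nabla u_{\eps,\delta}\cdot\nabla p_{\eps,\delta}\,\d x = 0.
\]
Using $p_{\eps,\delta}=(-\Delta)^s u_{\eps,\delta}$ together with Propositions~\ref{lem:ip} and~\ref{prop:2r}, the second term equals $\|u_{\eps,\delta}\|_{\dot H^{s+1}_N(\Omega)}^2$. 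Integrating in $t$ and bounding the right-hand side via the admissibility conditions \eqref{H1p}--\eqref{Hsc}, so that in particular the negative-power correction $\eps u_{0,\eps,\delta}^{2-\beta}/[(2-\beta)(1-\beta)]$ stays bounded thanks to $\theta_1<\tfrac{1}{\beta-\alpha-2}$, gives a uniform bound on $\int_0^T\|u_{\eps,\delta}\|_{\dot H^{s+1}_N}^2\,\d t$. Combining with mass conservation and the norm equivalence \eqref{eq:equiv} yields the full $L^2(0,T;H^{s+1}(\Omega))$ bound.

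\textbf{Step 2 (the pressure estimate).} The spectral definition \eqref{rlap} yields $\|(-\Delta)^s u\|_{H^{1-s}_N}\leqslant C\|u\|_{H^{s+1}_N}$ directly, so squaring and integrating against Step~1 gives $p_{\eps,\delta}\in L^2(0,T;H^{1-s}(\Omega))$ uniformly. \textbf{Step 3 (the $\partial_t u$ estimate).} Testing \eqref{apr-001} against $v\in W^{1,q}(\Omega)$ and a spatial Hölder give
\[
\|\partial_t u_{\eps,\delta}(t)\|_{(W^{1,q}(\Omega))^*}\leqslant \|m_{\eps,\delta}(u_{\eps,\delta}(t))\nabla p_{\eps,\delta}(t)\|_{L^{q'}(\Omega)}.
\]
A second Hölder with exponents $(2/q',2/(2-q'))$ and the elementary bound $m_{\eps,\delta}(z)\leqslant z^n$ (independent of $\eps,\delta$) yield
\[
\|m_{\eps,\delta}(u)\nabla p\|_{L^{q'}(\Omega)}^2 \leqslant \Bigl(\int_\Omega m_{\eps,\delta}(u)|\nabla p|^2\,\d x\Bigr)\,\|u\|_{L^{nq/(q-2)}(\Omega)}^{n}.
\]
The key arithmetic identity $nq/(q-2)=2d/(d-2s)$ matches exactly the critical Sobolev exponent for $H^s$, so \eqref{emb1} controls $\|u(t)\|_{L^{2d/(d-2s)}}\leqslant C\|u(t)\|_{H^s}$, which is uniformly bounded by \eqref{eq:energy_esti2}. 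Integrating in $t$, the remaining factor $\iint_{\Omega_T} m|\nabla p|^2$ is also uniform by \eqref{eq:energy_esti2}, closing the bound; the edge case $d\leqslant 2s$ (when $q=2$) is handled by $H^s\hookrightarrow L^\infty$ directly.

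\textbf{Main obstacle.} The delicate point is making Step~1 rigorous: the test function $(\mathscr{G}_0^{\eps,\delta})'(u_{\eps,\delta})$ carries negative powers of $u_{\eps,\delta}$, so its use requires both strict positivity and sufficient space-time regularity. This is precisely the role of the $\gamma$-approximation from \cite{DNLST24}, upon which $\mathscr{A}_{\eps,\delta}(u_0)$ is built: for $\gamma>0$ the mobility is uniformly elliptic, positivity and regularity propagate and the chain rule computation above is classical, so the identity $\tfrac{d}{dt}\!\int \mathscr{G}_0^{\eps,\delta}+\|u\|_{\dot H^{s+1}_N}^2=0$ is rigorously derived at the $(\gamma,\eps,\delta)$ level, and the uniform-in-$\gamma$ bound transmits to any limit point defining the class $\mathscr{A}_{\eps,\delta}(u_0)$.
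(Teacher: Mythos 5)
Your proposal reconstructs the argument that the paper delegates entirely to \cite{DNLST24}; the three ingredients — the $\alpha=0$ entropy test for the $L^2_t H^{s+1}_x$ bound, the spectral identity $\|(-\Delta)^s u\|_{\dot H^{1-s}_N}=\|u\|_{\dot H^{1+s}_N}$ for the pressure, and the two-fold H\"older with the $H^s\hookrightarrow L^{2d/(d-2s)}$ embedding for $\partial_t u$ — are precisely the standard route, and your exponent arithmetic ($q'=\tfrac{4d}{2d+n(d-2s)_+}$, $nq/(q-2)=\tfrac{2d}{(d-2s)_+}$) checks out. One point to make explicit: the uniform bound on $\int_\Omega\mathscr{G}_0^{\eps,\delta}(u_{0,\eps,\delta})\,\d x$ needed to close Step~1 is available because the class $\mathscr{A}_{\eps,\delta}(u_0)$ is built under \eqref{eq:hyp_u0_ex-0}, i.e.\ $\int_\Omega\mathscr{G}(u_0)\,\d x<\infty$ with $\mathscr{G}=\mathscr{G}_0$; the hypothesis \eqref{eq:hyp_u0_ex} quoted in the proposition, with $\alpha\neq 0$, would not on its own control $\int\mathscr{G}_0(u_{0,\eps,\delta})$ as $\eps,\delta\to 0$ (for $n>2$ and data vanishing on a set of positive measure this integral diverges, regardless of how small $\theta_1$ is chosen), so the fact that one actually works with $\mathscr{G}_0$-admissible data should be stated rather than inferred from the $\theta_1<\tfrac{1}{\beta-\alpha-2}$ condition, which was tailored to a different $\alpha$.
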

It is important to note that, although the results in \cite{DNLST24} were originally proven for 
$n\in (1,\frac{d+2(1-s)}{(d-2s)_+})$, this restriction is relevant in \cite{DNLST24} only during the limiting procedure with respect to 
to $\eps$ and $\delta$. In particular, the non-emptiness of the class $\mathscr{A}_{\eps,\delta}$, as well as the validity of Proposition~\ref{prop:estimates_eps_delta}, hold for the broader range $(\frac{2(s+1)}{ 4(s+1)-d }, \frac{d + 2(1-s)}{(d-2s)_+} +\tfrac{1}{2})$. 

\begin{proposition}[extra regularity] \label{ex-reg}
Let $u = u_{\eps,\delta} > 0$ be a weak solution in the sense of Definition \ref{def-pos}.
If $0 < u_{0,\eps,\delta} \in H^{s+1}(\Omega)$ and $s \in ( \tfrac{(d-2)_+}{2},1 )$  then for any $T>0$   we have    
\begin{equation}
\label{eq:extra_regu}
u \in L^{\infty} (0,T; H^{ s+1 }(\Omega)) \cap
  L^{2} (0,T; H^{2(s+1)}(\Omega)).
  \end{equation}
\end{proposition}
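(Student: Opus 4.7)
The plan is to test the equation in \eqref{apr-001} against $(-\Delta)^{s+1}u_{\eps,\delta}$. Because $u_{\eps,\delta}>0$ strictly by the approximation construction, $m_{\eps,\delta}(u_{\eps,\delta})$ is bounded away from zero, so the boundary condition $m_{\eps,\delta}(u_{\eps,\delta})\nabla p_{\eps,\delta}\cdot\hat{\bf{n}}=0$ reduces to $\nabla p_{\eps,\delta}\cdot\hat{\bf{n}}=0$; by Proposition~\ref{prop:2r} we may then identify $(-\Delta)^{s+1}u_{\eps,\delta}=-\Delta p_{\eps,\delta}$ in $L^2(\Omega)$. Expanding $\div(m(u)\nabla p)=m(u)\Delta p+m'(u)\nabla u\cdot\nabla p$ and integrating by parts, with the time-derivative side handled via Proposition~\ref{prop:2r}, yields the identity
\[
\tfrac12\tfrac{d}{dt}\|u\|_{\dot H_N^{s+1}(\Omega)}^2 + \int_\Omega m(u)|\Delta p|^2\,\d x = -\int_\Omega m'(u)(\nabla u\cdot\nabla p)\Delta p\,\d x
\]
(subscripts suppressed). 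This formal computation should be rigorously justified on a Galerkin scheme based on the Neumann eigenfunctions $\{\phi_k\}$, passing to the limit with the regularity from Definition~\ref{def-pos}.

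Since in the class of Definition~\ref{def-pos} the solution is strictly positive and uniformly bounded, $m_{\eps,\delta}(u_{\eps,\delta})\in[m_*,m^*]$ and $|m'_{\eps,\delta}(u_{\eps,\delta})|\leqslant M$, with constants depending on $\eps,\delta,T$ and $u_{0,\eps,\delta}$. A first Young inequality absorbs half of $\int m(u)|\Delta p|^2\,\d x$ into the LHS and reduces the task to estimating $\int_\Omega|\nabla u|^2|\nabla p|^2\,\d x$. For this I would combine H\"older's inequality with complementary exponents $\tfrac{2d}{d-2s}$ and $\tfrac{d}{s}$ (in the $L^2$ sense), the Sobolev embedding \eqref{emb1} applied to $\nabla u\in H^s$ (yielding $\|\nabla u\|_{L^{2d/(d-2s)}}\leqslant C\|u\|_{H^{s+1}}$), and a Gagliardo--Nirenberg interpolation $\|\nabla p\|_{L^{d/s}}\leqslant C\|\nabla p\|_{L^2}^{1-\theta}\|\nabla p\|_{H^1}^{\theta}$ with exponent $\theta=\tfrac{d}{2}-s$; the hypothesis $s\in(\tfrac{(d-2)_+}{2},1)$ is precisely what forces $\theta\in[0,1)$. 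Neumann elliptic regularity gives $\|\nabla p\|_{H^1}^2\lesssim\|\Delta p\|_{L^2}^2+\|p\|_{L^2}^2$, after which a second Young inequality absorbs $\eta\|\Delta p\|_{L^2}^2$ back into the LHS, producing a differential inequality of the form
\[
\tfrac{d}{dt}\|u\|_{\dot H_N^{s+1}}^2 + c\,\|\Delta p\|_{L^2}^2 \leqslant C\,\|u\|_{\dot H_N^{s+1}}^{2/(1-\theta)}\,\|\nabla p\|_{L^2}^2 + (\text{l.o.t.}),
\]
where the factor $\|\nabla p\|_{L^2}^2\in L^1(0,T)$ by the basic energy estimate~\eqref{eq:energy_esti2}.

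From this inequality, a Gronwall-type argument yields $u\in L^\infty(0,T;H_N^{s+1}(\Omega))$, while integrating in time and using the identification $\|u\|_{\dot H_N^{2(s+1)}}^2=\|\Delta p\|_{L^2}^2$ (Proposition~\ref{prop:2r}) gives $u\in L^2(0,T;H_N^{2(s+1)}(\Omega))$. The main obstacle I anticipate is the super-linear character of the Gronwall inequality above (the exponent $2/(1-\theta)$ exceeds $1$), which at face value only guarantees a local-in-time bound. To reach arbitrary $T>0$, a continuation argument is required: the weak solution is already known to exist globally in the class of Definition~\ref{def-pos} and to satisfy $u\in L^2(0,T;H^{s+1})$ by Proposition~\ref{prop:estimates_eps_delta}, which together with the differential inequality on short time windows rules out finite-time blow-up of $\|u(\cdot,t)\|_{\dot H_N^{s+1}}^2$ and closes the estimate.
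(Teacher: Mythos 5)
The basic identity (testing against $(-\Delta)^{s+1}u_{\eps,\delta}$, splitting $\div(m\nabla p)=m\Delta p+m'\nabla u\cdot\nabla p$, absorbing half of $\int m|\Delta p|^2$) is the same as in the paper. The genuine gap is in how you treat $\int|\nabla u|^2|\nabla p|^2$. You interpolate $\|\nabla p\|_{L^{d/s}}\leqslant\|\nabla p\|^{1-\theta}_{L^2}\|\nabla p\|^{\theta}_{H^1}$ with $\theta=\tfrac d2-s$ and treat $\|\nabla p\|^2_{L^2}$ as a fixed $L^1$-in-time coefficient. After Young's inequality this yields a differential inequality for $y(t):=\|u(t)\|^2_{\dot H^{s+1}}$ of the form $y'\leqslant C\,y^{1/(1-\theta)}\,g(t)$ with $g\in L^1(0,T)$ and $1/(1-\theta)>1$ (since $d>2s$ is the only regime where your interpolation has $\theta>0$). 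This super-linear Gronwall yields only a local-in-time bound: the smallness condition $(p-1)C\int_0^t g\,d\tau<y(0)^{1-p}$ is needed and is not guaranteed on $[0,T]$. The proposed continuation argument does not repair this: knowing $y\in L^1(0,T)$ (from $u\in L^2(0,T;H^{s+1})$) is compatible with $y$ blowing up at a finite time $T^*<T$ and is insufficient for the $L^\infty(0,T;H^{s+1})$ conclusion you need — this is exactly the obstruction familiar from, e.g., the $H^1$-energy inequality for 3D Navier–Stokes, where $\|\nabla u\|^2_{L^2}\in L^1$ does not imply $\|\nabla u\|_{L^2}\in L^\infty$.

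The paper avoids this by choosing a different interpolation: instead of interpolating $\nabla p$ between $L^2$ and $H^1$, it controls $\|\nabla p\|_{L^q}\leqslant C\|u\|_{H^{k+2s}}$ and then interpolates $\|u\|_{H^{k+2s}}\leqslant\|u\|^{\theta_0}_{H^{2(s+1)}}\|u\|^{1-\theta_0}_{H^{s+1}}$ directly between the quantity to be bounded and the dissipation. After Young the inequality is \emph{linear} in $\|u\|^2_{H^{s+1}}$, with a coefficient $\|\nabla u\|^{2/(1-\theta_0)}_{L^p}$; its time-integrability is then verified by space–time interpolation from the a priori class $u\in L^\infty(0,T;H^s)\cap L^2(0,T;H^{2s+1})$, and it is precisely here that the hypothesis $s>\tfrac{(d-2)_+}{2}$ is used. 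Two further points worth noting: (i) your argument, as written, does not cover the case $d<2s$ (where $\theta<0$ in your Gagliardo–Nirenberg step); the paper handles that separately using $H^s\hookrightarrow C^{s-d/2}$ to put $\nabla u$ in $L^\infty$; (ii) your reduction of the nonlinear coefficients to uniform constants $m_*,m^*,M$ is stronger than what Definition~\ref{def-pos} guarantees (it gives $u_{\eps,\delta}>0$ but not a quantitative lower bound); the paper handles the degeneracy of $m_{\eps,\delta}$ near $0$ quantitatively via the $\eps z^n$-regularization, which is why explicit $\varepsilon^{-1}$ factors appear in its final bound.
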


\begin{proof}[Proof of Proposition \ref{ex-reg}]
Multiplying (\ref{eq:ft}) by $-\Delta p(u) = (-\Delta)^{s+1} u$, we have
\begin{equation*}
\begin{split}
& \tfrac{1}{2} \tfrac{\d}{\d t}\int_\Omega { |(-\Delta)^{\frac{s+1}{2}} u|^2  \d x }  + \int_\Omega   m_{\eps,\delta}(u) | \Delta p |^2 \d x
  = - \int_\Omega  m'_{\eps,\delta}(u) \nabla u \cdot \nabla p \Delta p(u) \d x  \\
 & \leqslant \tfrac{1}{2} \int_\Omega  m_{\eps,\delta}(u) | \Delta p |^2 \d x + \tfrac{1}{2} \int_\Omega  \tfrac{(m'_{\eps,\delta}(u))^2}{m_{\eps,\delta}(u)} | \nabla u |^2 | \nabla p|^2 \d x,
\end{split}
\end{equation*}
whence
\begin{equation}\label{bs-1}
\tfrac{\d}{\d t}\int_\Omega { |(-\Delta)^{\frac{s+1}{2}} u|^2  \d x }  + \int_\Omega  m_{\eps,\delta}(u) | \Delta p |^2 \d x
\leqslant \int_\Omega  \tfrac{(m'_{\eps,\delta}(u))^2}{m_{\eps,\delta}(u)} | \nabla u |^2 | \nabla p|^2 \d x.
\end{equation}
We discuss separately the two cases $ d< 2s $ and $d\geqslant 2s$. 

In the case $d < 2s$, by the embedding
$H^{s}(\Omega)$ in $C^{s  - \frac{d}{2}}(\bar \Omega)$, from (\ref{bs-1})  we obtain that
\[
\begin{split}
\tfrac{\d}{\d t} \| u \|^2_{H^{s+1}(\Omega)}  + \int_\Omega  m_{\eps,\delta}(u) | \Delta p |^2 \d x
& \leqslant
\mathop {\sup} \limits_{\bar \Omega} ( \tfrac{( m'_{\eps,\delta}(u))^2 }{m_{\eps,\delta}(u)})
\mathop {\sup} \limits_{\bar \Omega} | \nabla u |^2   \int_\Omega    | \nabla p|^2 \d x  \\
&\leqslant \mathop {\sup} \limits_{\bar \Omega} \bigl( \tfrac{  m'_{\eps,\delta}(u)  }{m_{\eps,\delta}(u)}\bigr)^2
\| u \|^2_{H^{s+1}(\Omega)}  \int_\Omega  { m_{\eps,\delta}(u)  | \nabla p|^2 \d x }\\
&  \leqslant
\| u \|^2_{H^{s+1}(\Omega)} (\mathop {\inf} \limits_{\bar \Omega} u)^{-2} \int_\Omega  { m_{\eps,\delta}(u)  | \nabla p|^2 \d x }  ,
\end{split}
\]
whence, using Gr\"{o}nwall's lemma, we arrive at
\begin{equation}\label{bs-2}
\begin{split}
  \| u (t) \|^2_{H^{s+1}(\Omega)} & \leqslant  \| u_{0,\eps,\delta}\|^2_{H^{s+1}(\Omega)} e^{ (\mathop {\inf} \limits_{ \bar \Omega_t} u)^{-2}
\iint_{\Omega_t}  { f_{\eps,\delta}(u)  | \nabla p|^2 \d x \d \tau} } \\
& \leqslant \| u_{0,\eps,\delta}\|^2_{H^{s+1}(\Omega)} e^{ (\mathop {\inf} \limits_{ \bar \Omega_T} u)^{-2}
\| u_{0,\eps,\delta}\|^2_{H^{s}(\Omega)} }
\end{split}
\end{equation}
for all $ t \in [0, T]$.

In the case of $d \geqslant 2s$,  from (\ref{bs-1})  we get
$$
\tfrac{\d}{\d t} \| u \|^2_{H^{s+1}(\Omega)}  + \int_\Omega  m_{\eps,\delta}(u) | \Delta p |^2 \d x
\leqslant
  \tfrac{1}{2} \int_\Omega  m_{\eps,\delta}(u) | \Delta p |^2 \d x  + C  \norm{  \tfrac{m'_{\eps,\delta}(u)}{m^{1/2}_{\eps,\delta}(u)} \nabla u }_{L^{p}(\Omega)}^2 \| \nabla p \|_{L^{q}(\Omega)}^2,
$$
whence
$$
\tfrac{\d}{\d t} \| u \|^2_{H^{s+1}(\Omega)}  + \tfrac{1}{2} \int_\Omega  m_{\eps,\delta}(u) | \Delta p |^2 \d x
\leqslant C  \norm{ \tfrac{m'_{\eps,\delta}(u)}{m^{1/2}_{\eps,\delta}(u)} \nabla u }_{L^{p}(\Omega)}^2 \| \nabla p \|_{L^{q}(\Omega)}^2,
$$
where  $\frac{1}{p}+\frac{1}{q} = \frac{1}{2}$. Note that by standard interpolation results we have
\[
\begin{split}
 \| \nabla p \|_{L^{q}(\Omega)} &\leqslant \|p \|_{W^1_{q}(\Omega)} \leqslant C \|  p \|_{H^{k}(\Omega)} =
C \| u \|_{H^{k+2s}(\Omega)}    \\
&\leqslant C \| u \|^{\theta_0}_{H^{2(s+1)}(\Omega)} \|  u \|^{1-\theta_0}_{H^{ s + 1}(\Omega)} =
C \| \Delta p \|^{\theta_0}_{L^{2}(\Omega)} \| u \|^{1-\theta_0}_{H^{s+1}(\Omega)},
\end{split}
\]
%and 
%$$
%\| \nabla u \|_{L^{p}(\Omega)} \leqslant \| u \|_{W^1_{p}(\Omega)} ,
%\leqslant C \| u \|_{H^{l}(\Omega)} \leqslant
%C \|  u \|^{\theta_2}_{H^{s+1}(\Omega)} \|  u \|^{1-\theta_2}_{H^{s}(\Omega)},
%$$
where $ 2 \geqslant k \geqslant 1+ \frac{d}{p} $ with $p \geqslant d$ and $\theta_0 = \frac{k+s-1}{s+1}$. Using these estimates,
we deduce that
\[
\begin{split}
\tfrac{\d}{\d t} \| u \|^2_{H^{s+1}(\Omega)}  &+ C \int_\Omega  m_{\eps,\delta}(u) \abs{ \Delta p }^2 \d x
 \\
&\leqslant C   \mathop {\sup} \limits_{\bar \Omega} \left( \tfrac{  (m'_{\eps,\delta}(u) )^2 }{m^{1+\theta_1}_{\eps,\delta}(u)}\right)
 \| \nabla u \|^{2 }_{L^{p}(\Omega)} \|  u \|^{2(1-\theta_0)}_{H^{ s+1 }(\Omega)}  \| m^{\frac{1}{2} }_{\eps,\delta}(u) \Delta p \|^{2\theta_0}_{L^{2}(\Omega)}.
\end{split}
\]
Therefore, we arrive at
$$
\tfrac{\d}{\d t} \| u \|^2_{H^{s+1}(\Omega)}  + C\int_\Omega  m_{\eps,\delta}(u) | \Delta p |^2 \d x
\leqslant
C\, \varepsilon^{-1}   \mathop {\sup} \limits_{\bar \Omega} (u^{\beta - \frac{2}{1-\theta_0}} )
 \|  \nabla u \|^{\frac{2 }{1-\theta_0}}_{L^{p}(\Omega)} \|  u \|^{2}_{H^{ s+1 }(\Omega)}.
$$
We take $\beta = \frac{2}{1-\theta_0} > 2$, we deduce that
\begin{equation}\label{bs-3}
   \| u (t) \|^2_{H^{s+1}(\Omega)}  \leqslant   \| u_{0,\eps,\delta}\|^2_{H^{s+1}(\Omega)} e^{ C\, \varepsilon^{-1} \int \limits_0^t \|  \nabla u \|^{\frac{2 }{1-\theta_0}}_{L^{p}(\Omega)} \d \tau }
\end{equation}
for all $ t \geqslant 0$.
As $u_{\eps,\delta} \in L^{\infty}(0,T; H^s(\Omega)) \cap L^{2}(0,T; H^{2s+1}(\Omega))$ then by 
the interpolation theory
$$
u_{\eps,\delta} \in   L^{\frac{2}{\vartheta}}(0,T; H^{s+\vartheta(s+1)}(\Omega)), \text{ where } \vartheta \in (0,1).
$$
Since $ L^{\frac{2}{\vartheta}}(0,T; H^{s+\vartheta(s+1)}(\Omega)) \subset L^{\frac{2}{1-\theta_0}}(0,T; W_p^{1}(\Omega))$
provided
$$
\tfrac{1}{s+1} \bigl[ 1-s + \tfrac{d(p-2)}{2p} \bigr] \leqslant \vartheta \leqslant \tfrac{2-k}{s+1} ,
$$
i.\,e.
$$
 d \leqslant p \leqslant \tfrac{d}{k-1 + \frac{d}{2} -s  } \text{ and }  1 +  \tfrac{d}{p} 
 \leqslant  k \leqslant 2 + s -  \tfrac{d}{2} \Rightarrow
 0 <  \tfrac{d}{p}   \leqslant   s -  \tfrac{d-2}{2} 
$$
is true for  $s > \tfrac{(d-2)_+}{2}$, then
$$
\int \limits_0^t \|  \nabla u \|^{\frac{2 }{1-\theta_0}}_{L^{p}(\Omega)} \d \tau \leqslant C(\| u_{0,\eps,\delta}\|_{H^{s}(\Omega)} )
\text{ for all }  t \in [0,T].
$$
As a result, from (\ref{bs-3}) we deduce that
\begin{equation}\label{bs-3-nnn}
   \| u (t) \|^2_{H^{s+1}(\Omega)}  \leqslant   \| u_{0,\eps,\delta}\|^2_{H^{s+1}(\Omega)} e^{  \varepsilon^{-1}C(\| u_{0,\eps,\delta}\|_{H^{s}(\Omega)})}  \text{ for all }  t \in [0,T].
\end{equation}
\end{proof}

We now derive the $\alpha$-entropy estimate at the level of the approximating solutions $u_{\eps,\delta} > 0$, by using Proposition~\ref{ex-reg}.

\begin{proposition}[global $\alpha$-entropy estimate] \label{prop:alpha_entropy_est-new}
Let $u = u_{\eps,\delta} > 0$ be a weak solution in the  class $\mathscr{A}_{\eps,\delta}(u_0)$. 
%{\color{blue}NOTE that here $\mathscr{G}_{\alpha} = \mathscr{G}_{\alpha}^{\eps,\delta}$.}
% sense of Definition \ref{def-pos}.
If $ \alpha \in (-1 + \frac{d}{4(s+1) - d}, 1 ]$ and $s \in ( \frac{(d-2)_+}{2}, 1)$ then there exists a positive constant $C_1 =C_1(d, \alpha, s)$ such that
\begin{equation} \label{eq:alpha_entropy_2_globalt-new}
\int \limits_{\Omega } \mathscr{G}_{\alpha}^{\eps,\delta}(u)   \d x  + \tfrac{4}{ (\alpha +2)^2}  \iint \limits_{\Omega_T} \abs{\Dusm (u^{\frac{\alpha +2}{2}} ) }^2 \d x \d t \leqslant
\int \limits_{\Omega } \mathscr{G}_{\alpha}^{\eps,\delta}(u_{0,\eps,\delta}) \d x  + C_1 \iint \limits_{\Omega_T} { u^{\alpha +2 }  \d x \d t } ,
\end{equation}
where $C_1 = 0$ if $\alpha = 0$.
\end{proposition}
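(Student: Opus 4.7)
The strategy is to test the regularized equation with $(\mathscr{G}_\alpha^{\eps,\delta})'(u)$, which is admissible because by Proposition \ref{ex-reg} the solution $u = u_{\eps,\delta}$ is strictly positive and enjoys $u \in L^\infty(0,T;H^{s+1}(\Omega)) \cap L^2(0,T;H^{2(s+1)}(\Omega))$. By the very design of the approximate entropy, $m_{\eps,\delta}(z)(\mathscr{G}_\alpha^{\eps,\delta})''(z) = z^\alpha$ for $z>0$, so after one integration by parts (using the Neumann boundary conditions) I obtain the baseline identity
$$
\frac{d}{dt}\int_\Omega \mathscr{G}_\alpha^{\eps,\delta}(u)\,dx + \int_\Omega u^\alpha \nabla u \cdot \nabla (-\Delta)^s u\,dx = 0.
$$
Setting $w := u^{(\alpha+2)/2}$, the pointwise identities $u^\alpha \nabla u = \tfrac{2}{\alpha+2} u^{\alpha/2}\nabla w$ and $u^\alpha|\nabla u|^2 = \tfrac{4}{(\alpha+2)^2}|\nabla w|^2$, together with a further integration by parts, let me recast the dissipation in the split form $-\int [\tfrac{2\alpha}{(\alpha+2)^2} u^{-1}|\nabla w|^2 + \tfrac{2}{\alpha+2} u^{\alpha/2}\Delta w](-\Delta)^s u\,dx$.

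To extract the principal part with the coefficient $\tfrac{4}{(\alpha+2)^2}$, I invoke the nonlocal chain rule of Lemma \ref{lem-GG}, case (i), at order $\mu = s \in (0,1)$ with $\phi(z) = z^{(\alpha+2)/2}$. This gives $(-\Delta)^s w = \tfrac{\alpha+2}{2} u^{\alpha/2}(-\Delta)^s u - \mathcal{I}_s[u]$, with $\mathcal{I}_s[u] \geq 0$ when $\alpha \geq 0$ and $\mathcal{I}_s[u]\le 0$ when $-1<\alpha<0$. Substituting into the second summand and using Proposition \ref{lem:ip} to identify $\int (-\Delta)w \cdot (-\Delta)^s w\,dx = \int|\Dusm w|^2\,dx$ yields
$$
\frac{d}{dt}\int \mathscr{G}_\alpha^{\eps,\delta}(u)\,dx + \tfrac{4}{(\alpha+2)^2}\int |\Dusm w|^2\,dx = \tfrac{2\alpha}{(\alpha+2)^2}\int u^{-1}|\nabla w|^2(-\Delta)^s u\,dx + \tfrac{4}{(\alpha+2)^2}\int \Delta w\cdot \mathcal{I}_s[u]\,dx.
$$

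The final task --- and the main technical difficulty --- is to dominate the right-hand side by $C_1\int u^{\alpha+2}\,dx$, up to a small absorbable fraction of $\int|\Dusm w|^2\,dx$. For the algebraic term I use $u^{-1}|\nabla w|^2 = (\tfrac{\alpha+2}{2})^2 u^{\alpha-1}|\nabla u|^2$, H\"older's inequality, the interpolation \eqref{dong-01}, and the fractional Sobolev embedding of $H^{s+1}_N(\Omega)$; the sharp threshold $\alpha > -1 + d/(4(s+1)-d)$ is precisely what is needed so that the resulting Lebesgue norms of $w$ are controlled by $\|w\|_{L^2}^2 = \int u^{\alpha+2}\,dx$. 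For the chain-rule residue, the heat-kernel bounds \eqref{control_kernel} applied to the kernel representation of $\mathcal{I}_s[u]$ reduce the estimate to essentially the same framework. The obstacle is that neither remainder carries a useful sign (since $(-\Delta)^s u$ and $\Delta w$ can change sign), so the estimates rely entirely on the embedding threshold rather than on sign information; this contrasts with the classical local case $s=1$, where the analogous manipulation produces the single non-negative term $\tfrac{\alpha^2}{4}\int u^{\alpha-2}|\nabla u|^4\,dx$ that is bounded by Gagliardo--Nirenberg. The special case $\alpha = 0$ is degenerate: the algebraic term vanishes through the $\alpha$ prefactor, and $\mathcal{I}_s[u] \equiv 0$ because $\phi(z) = z$ has $\phi'' \equiv 0$, so $C_1 = 0$ and the identity reduces to the basic energy balance \eqref{eq:energy_esti2}. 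A final integration over $t \in (0,T)$ then concludes the proof of \eqref{eq:alpha_entropy_2_globalt-new}.
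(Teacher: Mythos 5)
You identify the correct starting point — test with $(\mathscr{G}_\alpha^{\eps,\delta})'(u)$, use $m_{\eps,\delta}(\mathscr{G}_\alpha^{\eps,\delta})'' = z^\alpha$, and bring in the nonlocal chain rule of Lemma~\ref{lem-GG} — but the decomposition you choose leads into a genuine gap that the hand-waved final paragraph does not close.

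Concretely, after writing the dissipation as $\tfrac{2}{\alpha+2}\int u^{\alpha/2}\nabla w\cdot\nabla p\,\d x$, you split it by integrating by parts first and then apply the chain rule only at order $\mu=s\in(0,1)$, arriving at
\begin{equation*}
\frac{\d}{\d t}\int \mathscr{G}_\alpha^{\eps,\delta}(u)\,\d x + \tfrac{4}{(\alpha+2)^2}\int |\Dusm w|^2\,\d x = \tfrac{2\alpha}{(\alpha+2)^2}\int u^{-1}|\nabla w|^2(-\Delta)^s u\,\d x + \tfrac{4}{(\alpha+2)^2}\int \Delta w\, \mathcal{I}_s[u]\,\d x .
\end{equation*}
The first remainder is a genuine triple product: $u^{-1}|\nabla w|^2 = (\tfrac{\alpha+2}{2})^2 u^{\alpha-1}|\nabla u|^2$ carries a negative power of $u$ (for $\alpha<1$), it is multiplied by the \emph{sign-indefinite} quantity $(-\Delta)^s u$, and you need the final constant $C_1$ to be independent of $\eps,\delta$ even as $\inf u_{\eps,\delta}\to 0$. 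Nothing in H\"older, \eqref{dong-01}, or the Sobolev embeddings produces a bound of the form $\sigma\int|\Dusm w|^2 + C\int u^{\alpha+2}$ for this term; in the classical case $s=1$ the analogous term is $\tfrac{\alpha^2}{4}\int u^{\alpha-2}|\nabla u|^4$ and it is \emph{one-signed}, whereas yours is not, and you even flag that contrast without resolving it. The second remainder is worse than it looks: moving $\Delta$ onto $\mathcal{I}_s[u]$ reproduces $(-\Delta)\mathcal{I}_s[u]$, which, as the Appendix shows, decomposes into $\mathcal{I}_{s+1}[u]$ plus pieces whose coefficient $\tfrac{1}{\Gamma(1-\mu)}\int_0^{+\infty}(\int_\Omega K\,\d y)\tfrac{\d t}{t^{\mu}}$ is formally divergent; the cancellations that render the result finite are exactly the content of the higher-order chain rule. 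So handling your second term \emph{is} rederiving Lemma~\ref{lem-GG}(iii), not an application of it.

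The paper avoids both problems by not integrating by parts before invoking the chain rule: it keeps $\frac{\d}{\d t}\int\mathscr{G}_\alpha = -\tfrac{1}{\alpha+1}\int u^{\alpha+1}(-\Delta)^{s+1}u$, applies Lemma~\ref{lem-GG}(iii) at order $\mu=s+1$ to $\phi(z)=z^{(\alpha+2)/2}$, multiplies by $w$ and integrates, and then uses identity \eqref{ps-new} to extract a \emph{second} copy of $\int|\Dusm w|^2$ and thereby upgrade the coefficient to $\tfrac{4}{(\alpha+2)^2}$. After this the remainder collapses into a \emph{single} symmetric kernel integral
\[
- \tfrac{\alpha}{(\alpha+1)(\alpha+2)^2\Gamma(-s-1)}\int_\Omega\int_0^{+\infty}\Bigl(\int_\Omega K(x,y,t)F(u(x),u(y))\,\d y\Bigr)\tfrac{\d t}{t^{s+2}}\,\d x,
\]
with $F$ given explicitly in \eqref{rrt-fff}. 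The decisive point — and the ingredient your proposal is missing — is the algebraic bound $|F(r,w)|\leqslant C(\alpha)|r-w|^{\alpha+2}$ valid for $\alpha\in(-1,1]$: combined with the heat-kernel bound \eqref{control_kernel} and the H\"older regularity $u(\cdot,t)\in C^{2(s+1)-d/2}$ furnished by Proposition~\ref{ex-reg}, this is what turns the remainder into something dominated by $C_1\int u^{\alpha+2}$, with $C_1$ depending only on $d,\alpha,s$. Unless you reorganize your two remainder terms to reveal this symmetric $F$-structure — which in effect means switching to the paper's $\mu=s+1$ chain rule and the identity \eqref{ps-new} — the final estimate in your proposal does not go through.
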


\begin{proof}[Proof of Proposition \ref{prop:alpha_entropy_est-new}.]
For notational convenience, we set  $\mathscr{G}_{\alpha} = \mathscr{G}_{\alpha}^{\eps,\delta}$.  Multiplying (\ref{eq:ft}) by $\mathscr{G}'_{\alpha}(u) $, we have
\begin{equation}\label{eq:conto1-new}
\begin{split}
 \frac{\d}{\d t}\int_\Omega \mathscr{G}_{\alpha}(u)  \d x   & = \int  \mathscr{G}_{\alpha}'(u) \div(m(u)\nabla p) \d x
  =   -\int_\Omega  m(u) \mathscr{G}''_{\alpha}(u)\nabla u\cdot \nabla p\d x \\
 & =  -\int_\Omega   u^{\alpha} \nabla u\cdot \nabla p\d x   =  \tfrac{1}{\alpha + 1}\int_\Omega u^{\alpha +1}   \Delta p\d x \\
 & =
- \tfrac{1}{\alpha + 1}\int_\Omega u^{\alpha +1}  (- \Delta)^{s+1} u \d x .
\end{split}
\end{equation}
%
%Next, we use the following equality for $\mu \in (1,2)$ (see Lemma \ref{lem-GG}(iii)):
%\begin{equation}\label{gg-0-01-n}
%(-\Delta)^{\mu} \phi(u(x)) := \phi'(u(x))(-\Delta)^{\mu} u (x) -  \mathcal{J}_\mu [u](x),
%\end{equation}
%where
%$$
%\mathcal{J}_\mu [u](x) = \mathcal{I}_{\mu } [u](x) - \phi''(u(x)) |\nabla u(x)|^2  \tfrac{1}{ \Gamma(1-\mu)}  \int \limits_0^{+\infty} { \Bigl( \int \limits_{\Omega} {  K(x,y,t) dy}   \Bigr) \tfrac{dt}{t^{\mu}}},
%$$
%$$
%\mathcal{I}_\mu[u](x) := -  \tfrac{1}{ \Gamma(-\mu)} \int \limits_0^{+\infty} { \Bigl(
%\int \limits_{\Omega} { K(x,y,t) \int \limits_{u(x)}^{u(y)} {  \phi''(z)   (u(y) - z) \,dz} dy}   \Bigr) \tfrac{dt}{t^{1+\mu}}},
%$$
%and $K(x,y,t)$ is the kernel of the heat operator.
%
Applying 
%(\ref{gg-0-01-n})
Lemma \ref{lem-GG}(iii)  with $\mu = s + 1 \in (1,2)$ and $\phi(z) = z^{\frac{\alpha +2}{2}} $, we obtain
\begin{equation}\label{rrt-ktr}
(- \Delta)^{s+1} u^{\frac{\alpha +2}{2}} = \tfrac{\alpha +2}{2} u^{\frac{\alpha}{2}} (- \Delta)^{s+1} u -
\mathcal{J}_{s+1} [u](x).
\end{equation}
Multiplying this equality by $u^{\frac{\alpha +2}{2}}$ and after then integrating over $\Omega$, we have
$$
\int_\Omega  \bigl| (- \Delta)^{\frac{s+1}{2}} u^{\frac{\alpha +2}{2}} \bigr|^2  \d x =
\tfrac{\alpha +2}{2} \int_\Omega u^{\alpha +1}  (- \Delta)^{s+1} u \, \d x -
\int_\Omega u^{\frac{\alpha + 2}{2}} \mathcal{J}_{s+1} [u] \, \d x .
$$
So, from (\ref{eq:conto1-new}) we arrive at
$$
\frac{\d}{\d t}\int_\Omega \mathscr{G}_{\alpha}(u)  \d x +
\tfrac{2}{(\alpha +1)(\alpha +2)} \int_\Omega  \bigl| (- \Delta)^{\frac{s+1}{2}} u^{\frac{\alpha +2}{2}} \bigr|^2  \d x =
-\tfrac{2}{(\alpha +1)(\alpha +2)} \int_\Omega u^{\frac{\alpha + 2}{2}} \mathcal{J}_{s+1} [u] \, \d x 
$$
provided $\alpha > - 1$.
Using (\ref{ps-new}) with $\mu = s+1 \in (1,2)$ and $v = u^{\frac{\alpha + 2}{2}}$, we have
\[
\begin{split}
\int_\Omega u^{\frac{\alpha + 2}{2}} \mathcal{J}_{s+1} [u] \, \d x &=
- \tfrac{ \alpha  }{  \alpha +2 } \tfrac{1}{ \Gamma( -s)}  \int \limits_{\Omega} { |\nabla u^{\frac{\alpha +2}{2}}(x)|^2  \int \limits_0^{+\infty} { \Bigl( \int \limits_{\Omega} {  K(x,y,t) dy}   \Bigr) \tfrac{dt}{t^{s+1}}} dx} \\
&\quad+ \int_\Omega u^{\frac{\alpha + 2}{2}} \mathcal{I}_{s+1} [u] \, \d x =
\tfrac{\alpha}{ \alpha +2 } \int_\Omega { \bigl| (- \Delta)^{\frac{s+1}{2}} u^{\frac{\alpha +2}{2}} \bigr|^2  \d x}  \\
&\quad + \tfrac{\alpha}{2( \alpha +2)\Gamma(-s-1)}  \int \limits_{\Omega} { \int \limits_0^{+\infty} { \Bigl( \int \limits_{\Omega} {  K(x,y,t) (u^{\frac{\alpha + 2}{2}}(y) - u^{\frac{\alpha + 2}{2}}(x))^2 \,dy}   \Bigr) \tfrac{dt}{t^{s+2}}} dx}  \\
& \quad-\tfrac{\alpha(\alpha +2)}{4\Gamma(-s-1)} \int \limits_\Omega { u^{\frac{\alpha + 2}{2}}(x)
\int \limits_0^{+\infty} { \Bigl( \int \limits_{\Omega} {  K(x,y,t) \int \limits_{u(x)}^{u(y)} {  z^{\frac{\alpha -2}{2}} (u(y) - z) dz} \,dy}   \Bigr) \tfrac{dt}{t^{s+2}}} \, \d x } .
\end{split}
\]
From here, we arrive at
\begin{equation}\label{rt-n-1}
\begin{split}
&\frac{\d}{\d t}\int_\Omega \mathscr{G}_{\alpha}(u)  \d x +
\tfrac{4}{ (\alpha +2)^2} \int_\Omega  \bigl| (- \Delta)^{\frac{s+1}{2}} u^{\frac{\alpha +2}{2}} \bigr|^2  \d x  \\
 & = -  \tfrac{  \alpha }{(\alpha +1)( \alpha +2)^2 \Gamma(-s-1)}    \int \limits_{\Omega} { \int \limits_0^{+\infty} { \Bigl( \int \limits_{\Omega} {  K(x,y,t) F(u(x), u(y)) \,dy}   \Bigr) \tfrac{dt}{t^{s+2}}} dx}.
\end{split}
\end{equation}
Note that
$$
u^{\frac{\alpha + 2}{2}} (x) \int \limits_{u(x)}^{u(y)} { z^{\frac{\alpha -2}{2}} (u(y) - z) \, dz} =
\tfrac{2}{\alpha +2}u^{\alpha +2}(x) + \tfrac{4}{\alpha(\alpha +2) }(u(x) u(y))^{\frac{\alpha +2}{2}} -
\tfrac{2}{\alpha}u^{\alpha +1}(x)u(y),
$$
whence
\begin{equation}
\label{rrt-fff}
\begin{split}
F(u(x), u(y)) & : = ( u^{\frac{\alpha +2}{2} }(y) - u^{\frac{\alpha +2}{2} }(x) )^2 -
 \tfrac{ (\alpha +2)^2}{2} u^{\frac{\alpha + 2}{2}} (x) \int \limits_{u(x)}^{u(y)} { z^{\frac{\alpha -2}{2}} (u(y) - z) \, dz} \\
& =u^{ \alpha +2}(y) - (\alpha +1) u^{\alpha +2}(x) -  \tfrac{4(\alpha +1)}{\alpha}(u(x) u(y))^{\frac{\alpha +2}{2}} +
\tfrac{(\alpha +2)^2}{\alpha}u^{\alpha +1}(x)u(y).
\end{split}
\end{equation}
Note that $F(r,w)$ is continuous function for all $r \geqslant 0$ and $w \geqslant 0$ such that $F(r,r) = 0$
for all $r \geqslant 0$, $\partial_r F(r,w) + \partial_w F(w,r) = 0$ at $r = w$, and $F(\lambda r, \lambda w) = \lambda^{\alpha +2}F(r,w)$
for any $\lambda > 0$.  Taking $r = k$ and $w= k(1+\nu)$ for $\nu \geqslant -1$, we arrive at
$$
F(r,w) = k^{\alpha + 2} \Phi(\nu), \ |r-w|^{\alpha +2} = k^{\alpha + 2} |\nu|^{\alpha + 2},  \text{ where } \Phi(\nu) := F(1,1+\nu),
$$
whence
$$
\frac{F(r,w)}{|r-w|^{\alpha +2}} = \frac{\Phi(\nu)}{|\nu|^{\alpha + 2}}.
$$
From here, we find that
\begin{equation}\label{est-ant}
| F(r, w)| \leqslant  C(\alpha) \,  |r -w |^{\alpha +2} \ \ \ \forall\, r,\,w \geqslant 0
\end{equation}
provided $\alpha \in (-1, 1] $, where
$$
C(\alpha) := \mathop {\sup} \limits_{\nu \geqslant -1} \bigl | \tfrac{\Phi(\nu)}{|\nu|^{\alpha + 2}} \bigr | .
$$
So, we need to show that $0 < C(\alpha) < +\infty$. Really,  
$$
\bigl | \tfrac{\Phi(-1)}{|-1|^{\alpha + 2}} \bigr | = \alpha + 1,  \bigl | \tfrac{\Phi(\nu)}{|\nu|^{\alpha + 2}} \bigr | \to 1 \text{ as } \nu \to +\infty.
$$
If $|\nu| < 1$ then
$$
\Phi(\nu) =   \tfrac{(\alpha +1)(\alpha +2)^2}{12} \nu^3 +
\tfrac{(\alpha +1)(\alpha +2)^2 (\alpha -\frac{4}{3})}{32} \nu^4  +...    
$$
and
$$
\bigl | \tfrac{\Phi(\nu)}{|\nu|^{\alpha + 2}} \bigr | \to 0 \text{ as } \nu \to 0 \text{ if } \alpha < 1,
\ \bigl | \tfrac{\Phi(\nu)}{|\nu|^{\alpha + 2}} \bigr | \to \tfrac{3}{2} \text{ as } \nu \to 0 \text{ if } \alpha = 1.
$$
Therefore, the function $\frac{\Phi(\nu)}{|\nu|^{\alpha + 2}}$ is continuous and bounded for all $\nu \geqslant -1$. 
As a result, we obtain that $ C(\alpha) > 0$ is bounded.

%
%then
%$$
%\abs{ F(r, r(1+\nu))} \leqslant C(\alpha) \, r^{\alpha +2} |\nu|^3
%$$
%for $\nu$ such that $\abs{\nu} < 1 $. 
%We take $\nu = \frac{w-r}{r}$ with $\abs{w-r} < r$. 
%Note that 
%\[
%\frac{r^{\alpha-1}\abs{r-w}^{3}}{\abs{r-w}^{\alpha +2}} = \frac{1}{r^{1-\alpha}}\abs{r-w}^{1-\alpha}\leqslant 1,
%\]
%and thus, 
%we arrive at
%\begin{equation}\label{est-ant}
%| F(r, w)| \leqslant C(\alpha) \, r^{\alpha -1} |r -w |^3  \leqslant C(\alpha) \,  |r -w |^{\alpha +2}
%\end{equation}
%provided $\alpha \leqslant 1 $.

Therefore,  using (\ref{est-ant}) and the continuity of $u$ w.r.t. $x$, we get, for any $\eta>0$, 
$$
\abs{\iint_{\{ |x - y| \leqslant \eta \}} { K(x,y,t) F(u(x), u(y)) \d y \d x}} \leqslant
C \iint_{\{ |x - y| \leqslant \eta \}} { K(x,y,t)
| u(y) - u(x) |^{\alpha +2}   \d y \d x}.
$$
Moreover,   by (\ref{control_kernel}) we find that 
$$
\int_0^{+\infty} { t^{-(s+2)} K(x,y,t) \, dt } \leqslant
C  \int_0^{+\infty} { t^{-(s+2 + \frac{d}{2})} e^{-\frac{(x-y)^2}{4t}} \, dt } =
 C  \frac{2^{d+2(s+1)}\Gamma(\frac{d+2(s+1)}{2}) }{|x - y|^{d+2(s+1)}},
$$
and recalling that for $t$ fixed
$u(\cdot, t) \in C^{\vartheta}(\bar \Omega)$ with $\vartheta > \frac{2(s+1)}{\alpha +2}$, we have
\[
\begin{split}
&\abs{\int \limits_0^{+\infty} { \left( \iint_{\{ |x - y| \leqslant \eta \} } {  K(x,y,t) F(u(x), u(y)) \,dy dx }   \right) \tfrac{dt}{t^{s+2}}}}
   \\
\qquad &\qquad \leqslant C \int_{\Omega}{ \int_{\{ \abs{h} \leqslant \eta \}} { \tfrac{| u(x+h) - u(x) |^{\alpha +2} }{h^{d +2(s+1)}}  \d h}  \d x}\\
\qquad &\qquad \leqslant  C \, \eta^{ \vartheta(\alpha+2) - 2(s+1)}.
\end{split}
\]
Thanks to Proposition \ref{ex-reg}, we know that 
$0 < u_{0,\varepsilon,\delta} \in H^{ s+1 }(\Omega)$ implies
$0 < u_{\varepsilon,\delta} \in L^{\infty} (0,T; H^{ s+1}(\Omega)) \cap L^{2} (0,T; H^{2(s+1)}(\Omega)) $. In particular, due to the
compact embedding $H^{2(s+1)}(\Omega)$ in $C^{2(s+1) - \frac{d}{2}}(\bar \Omega)$, we have
$u_{\varepsilon,\delta}(\cdot, t) \in C^{2(s+1) - \frac{d}{2}}(\bar \Omega)$ for a.\,e. $t > 0$.
We fix $ \vartheta:= 2(s+1) - \frac{d}{2}$ and we observe that in the current range for $\alpha$ (in particular, $\alpha > -1 + \frac{d}{4(s+1) - d}$)
we have that $\vartheta > \frac{2(s+1)}{\alpha +2} $. Therefore, 
we arrive at
\begin{equation}\label{ant-sym}
\mathop {\lim}_{\eta \to 0} \int_0^{+\infty} { \left( \iint_{\{ \abs{x - y} \leqslant \eta \} } {  K(x,y,t) F(u(x), u(y)) \,dy dx }   \right) \tfrac{dt}{t^{s+2}}} = 0.
\end{equation}
Consequently, we conclude with the help of (\ref{rt-n-1}) and of (\ref{ant-sym}) that
\begin{equation}\label{rt-n-2}
\frac{\d}{\d t}\int_\Omega \mathscr{G}_{\alpha}(u)  \d x +
\tfrac{4}{ (\alpha +2)^2} \int_\Omega  \bigl| (- \Delta)^{\frac{s+1}{2}} u^{\frac{\alpha +2}{2}} \bigr|^2  \d x \leqslant
 C \int_{\Omega} { u^{\alpha + 2} \, \d x }.
\end{equation}
Integrating (\ref{rt-n-2}) in time, we obtain (\ref{eq:alpha_entropy_2_globalt-new}).
\end{proof}

Next, we state and prove the following {\itshape $\alpha$-entropy a priori estimate}, which is a consequence of 
Proposition \ref{prop:alpha_entropy_est-new}.
% {\color{red}Check: Proposition 3.1 contains the estimates on $u_{\eps,\delta}$}.
% {\color{red}Why we state this proposition for the limit solution that we have not yet constructed? Maybe it is better to write the proposition for $u_{\eps,\delta}$, namely solutions in the class $\mathscr{A}_{\eps,\delta}$. Then, when we prove existence we state that solutions satisfy the entropy estimate. Moreover, the name ''local in time'' is a bit misleading as 3.28 holds for any $T>0$. Probably this proposition is an heritage of an old draft. All the needed infos are already contained in Proposition 3.2} 

\begin{proposition}[$\alpha$-entropy a priori estimate]
\label{prop:alpha_entropy_est}
Assume that $s \in \left(\frac{(d-2)_+}{2}, 1 \right) $. Let $u = u_{\eps,\delta} > 0$ be a weak solution in the  class $\mathscr{A}_{\eps,\delta}(u_0)$.
The following inequalities hold
\begin{itemize}
\item If $\max\{ -1 + \frac{d}{4(s+1) - d}, n -2\} < \alpha \leqslant  1$ then there exist  a positive constant
$C_g=C_g(d,\Omega, \alpha, n)$ and $T^* > 0$ such that for any $T \in [0,T^*]$ there holds
\begin{equation}
\label{eq:alpha_entropy_est2}
\begin{split}
 \int_{\Omega } \mathscr{G}_{\alpha}^{\eps,\delta} (u(T))   \d x  + \tfrac{2}{ (\alpha +2)^2 } \iint_{\Omega_T} \abs{\Dusm (u^{\frac{\alpha +2}{2}} ) }^2 \d x \d t
 \leqslant
C_g \int_{\Omega } \mathscr{G}_{\alpha}^{\eps,\delta}(u_{0,\eps,\delta}) \d x;
 \end{split}
 \end{equation}
\item If $-1 + \frac{d}{4(s+1) - d} < \alpha \leqslant   1$ 
%{\color{blue} $-1 + \frac{d}{4(s+1) - d} < \alpha \leqslant \min\{ 1, \frac{4s}{d-2s} \} = 1$ (as $s > \frac{(d-2)_+}{2} $)} 
then  for any $T>0$
 \begin{equation} \label{eq:alpha_entropy_2_globalt}
 \int_{\Omega } \mathscr{G}_{\alpha}^{\eps,\delta} (u)   \d x  + \tfrac{4}{ (\alpha +2)^2 } \iint_{\Omega_T} \abs{\Dusm (u^{\frac{\alpha +2}{2}} ) }^2 \d x \d t \\
 \leqslant
\int_{\Omega }\mathscr{G}_{\alpha}^{\eps,\delta} (u_{0,\eps,\delta}) \d x + C\,T\, \norm{u_{0,\eps,\delta}}_{\dot{H}^s(\Omega)}^{\alpha+2}.
 \end{equation}
 \end{itemize}
\end{proposition}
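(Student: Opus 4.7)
The plan is to derive both inequalities from the global $\alpha$-entropy estimate \eqref{eq:alpha_entropy_2_globalt-new} of Proposition~\ref{prop:alpha_entropy_est-new} by controlling the remainder term $C_1\iint_{\Omega_T} u^{\alpha+2}\,\d x\,\d t$. Setting $v:=u^{(\alpha+2)/2}$, one has $\iint_{\Omega_T} v^2 = \iint_{\Omega_T} u^{\alpha+2}$, and the dissipation in \eqref{eq:alpha_entropy_2_globalt-new} is $\iint_{\Omega_T}|\Dusm v|^2$, which pointwise in time equals $\|v(\cdot,t)\|_{\dot H^{s+1}(\Omega)}^2$ by Proposition~\ref{prop:2r}. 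The natural tool is a Gagliardo--Nirenberg interpolation of the schematic form
\begin{equation*}
\|v\|_{L^2(\Omega)}^2 \leqslant C\,\|v\|_{\dot H^{s+1}(\Omega)}^{2\theta}\,\|v\|_{L^r(\Omega)}^{2(1-\theta)} + \text{lower order},
\end{equation*}
combined with Young's inequality to absorb the $\|v\|_{\dot H^{s+1}}^2$ factor into the dissipation. The exponents $r$ and $\theta$ are fixed by dimensional scaling, and $r$ is chosen according to the quantity by which one can control the lower-order factor.

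For Part 2 (global-in-time), I take $r=4/(\alpha+2)$, so that $\|v\|_{L^r}^r = \int_\Omega u^2\,\d x$. Combining \eqref{eq:equiv}, mass conservation, and the energy estimate \eqref{eq:energy_esti2} then yields $\|u(\cdot,t)\|_{L^2(\Omega)} \leqslant C(\|u_0\|_{\dot H^s(\Omega)} + \|u_0\|_{L^1(\Omega)})$ uniformly in $t$. Applying Young's inequality, absorbing into the dissipation of \eqref{eq:alpha_entropy_2_globalt-new}, and integrating in time deliver \eqref{eq:alpha_entropy_2_globalt}; the condition $\alpha > -1 + d/(4(s+1) - d)$ is exactly the scaling constraint that keeps $\theta$ strictly below $1$, so Young's inequality is applicable.

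For Part 1 (local-in-time), the extra assumption $\alpha > n-2$ is exploited through the pointwise lower bound $\mathscr{G}_\alpha(z) \geqslant c\, z^{\alpha-n+2} - C$ (read off directly from \eqref{eq:entropy}), which gives $\int_\Omega u^{\alpha-n+2}\,\d x \leqslant C(1 + E(t))$ where $E(t):= \int_\Omega \mathscr{G}^{\eps,\delta}_\alpha(u)\,\d x$. When $\alpha \geqslant 2n-2$ I choose $r = 2(\alpha-n+2)/(\alpha+2) \geqslant 1$, for which $\|v\|_{L^r}^r = \int u^{\alpha-n+2}$ is directly entropy-controlled; when $n-2 < \alpha < 2n-2$ a preliminary H\"older interpolation between $u^{\alpha-n+2}$ and $u^{\alpha+2}$ is required, after which the $u^{\alpha+2}$ factor is reabsorbed into the left-hand side by a further Young's inequality (admissible precisely because $\alpha > n-2$). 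In either case one arrives at a differential inequality
\begin{equation*}
E'(t) + \tfrac{2}{(\alpha+2)^2}\int_\Omega|\Dusm v|^2\,\d x \leqslant C\bigl(1 + E(t)\bigr)^p,
\end{equation*}
with $p > 1$ depending on $\alpha, n, s, d$. This nonlinear ODE has a maximal existence time $T^* = T^*(E(0))$, on which one concludes $E(t) \leqslant C_g\,E(0)$ after absorbing the additive constant into $C_g$.

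The main technical obstacle is verifying that the Gagliardo--Nirenberg scaling exponent $\theta$ lies strictly in $(0,1)$ throughout the admissible range of $(\alpha, s, n, d)$, and, in the regime $n-2 < \alpha < 2n-2$ of Part 1, handling the resulting self-referential bound $U \leqslant \eta D + C\,\Phi(E)\,U^q$ with $q \in (0,1)$ via a further Young step; the condition $\alpha > -1 + d/(4(s+1) - d)$ is precisely the scaling constraint ensuring $\theta < 1$, so once this is verified the remaining bookkeeping is routine. If $r < 1$ happens to arise, one may alternatively trade the $L^r$ control against the embedding $H^{s+1}(\Omega) \hookrightarrow L^\infty(\Omega)$ (valid since $s+1 > d/2$ throughout $s > (d-2)_+/2$), pairing $L^1$ with $L^\infty$ and reabsorbing the resulting $L^\infty$ piece via the interpolation \eqref{dong-01}.
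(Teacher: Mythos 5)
Your plan for Part~1 (the local-in-time estimate \eqref{eq:alpha_entropy_est2}) is in fact the paper's proof: apply the Gagliardo--Nirenberg inequality of Lemma~\ref{G-N-nn} to $w=u^{(\alpha+2)/2}$ with $b=\tfrac{2(\alpha-n+2)}{\alpha+2}$, bound $\int u^{\alpha-n+2}$ by the entropy using $\alpha>n-2$, then absorb with Young and run a nonlinear Gr\"{o}nwall argument on a short time interval. However, the case distinction you introduce at $\alpha=2n-2$ is superfluous: Lemma~\ref{G-N-nn} explicitly admits any $b\in(0,2)$, so $b=\tfrac{2(\alpha-n+2)}{\alpha+2}<1$ (equivalently $\alpha<2n-2$) is perfectly allowed, and the paper handles the whole range $\alpha>n-2$ in a single pass. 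Also, $\theta<1$ is automatic in Lemma~\ref{G-N-nn} whenever $b\in(0,2)$; the constraint $\alpha>-1+\tfrac{d}{4(s+1)-d}$ is not a Gagliardo--Nirenberg scaling condition but is inherited from Proposition~\ref{prop:alpha_entropy_est-new}, where it ensures the H\"{o}lder exponent $\vartheta=2(s+1)-\tfrac{d}{2}$ exceeds $\tfrac{2(s+1)}{\alpha+2}$.

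For Part~2 your route diverges from the paper's and has a genuine gap. You propose to interpolate with $r=b=\tfrac{4}{\alpha+2}$, but $b\geqslant 2$ whenever $\alpha\leqslant 0$, which sits outside the admissible range $b\in(0,2)$ of Lemma~\ref{G-N-nn}; the hypothesis $\alpha>-1+\tfrac{d}{4(s+1)-d}$ permits $\alpha\leqslant 0$ whenever $d<2(s+1)$, so this is not an empty regime. Your fallback via $H^{s+1}\hookrightarrow L^\infty$ is described only for $r<1$ and does not cover $r\geqslant 2$. Separately, the ``absorb into the dissipation'' step is incompatible with the conclusion as stated: \eqref{eq:alpha_entropy_2_globalt} keeps the full coefficient $\tfrac{4}{(\alpha+2)^2}$, whereas any Young-type absorption would reduce it. The paper avoids all of this: by the energy estimate \eqref{eq:energy_esti2} one has $\|u(t)\|_{\dot H^s}\leqslant\|u_{0,\eps,\delta}\|_{\dot H^s}$ uniformly in $t$, the Sobolev embedding \eqref{emb1}--\eqref{emb3} then gives $u\in L^\infty(0,T;L^{2d/(d-2s)_+}(\Omega))$, hence $\operatorname*{ess\,sup}_t\int_\Omega u^{\alpha+2}\,\d x\leqslant C\|u_{0,\eps,\delta}\|_{\dot H^s}^{\alpha+2}$, and one simply substitutes this bound into the right-hand side of \eqref{eq:alpha_entropy_2_globalt-new}. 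No interpolation, no Young, no coefficient loss.
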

\begin{proof}[Proof of Proposition \ref{prop:alpha_entropy_est}.]
We work within the approximation scheme, assuming 
$u=u_{\eps,\delta} > 0$, and make use of estimate~(\ref{eq:alpha_entropy_2_globalt-new}).

We begin by proving \eqref{eq:alpha_entropy_2_globalt}. To this end, we recall that, under 
the current assumptions, the weak solutions constructed satisfy the energy dissipation estimate 
(see \cite[Theorem 3.1]{DNLST24}):
 $$
 \norm{u(t)}_{\dot{H}^s(\Omega)}\leqslant \norm{u_{0,\eps,\delta}}_{\dot{H^s}(\Omega)}.
 $$
 Therefore, thanks to the Sobolev embeddings, we have that $u\in L^\infty(0,T;L^{\frac{2d}{d-2s}}(\Omega))$ and thus
 $u^{\alpha+2}\in L^1(\Omega)$ whenever  $0< \alpha +2 \leqslant \frac{2d}{d-2s}$, namely,  $-2 < \alpha \leqslant \frac{4s}{d-2s}$.
 As a result, we have that in this range of $\alpha$ (and of $n$) there exists a constant $C=C(\Omega,d,s)$ such that
$$
\mathop {\textrm{ess sup}} \limits_{t\in (0,T)} \int_{\Omega} u^{\alpha +2}\d x \leqslant C\norm{u_{0,\eps,\delta}}_{\dot{H}^s(\Omega)}^{\alpha+2},
$$
and then from (\ref{eq:alpha_entropy_2_globalt-new}) we get (\ref{eq:alpha_entropy_2_globalt}).

Next, we will prove (\ref{eq:alpha_entropy_est2}). To absorb the right-hand side of (\ref{eq:alpha_entropy_2_globalt-new}), i.\,e. $\int_\Omega u^{\alpha +2}\d x$, with the terms in the left-hand side of the one, we are forced to restrict the time interval. Thus, the resulting estimate will be only {\emph {local} } in time.

Applying Lemma~\ref{G-N-nn} to $w:= u^{\frac{\alpha +2}{2}}$ with $b = \frac{2(\alpha - n+2)}{\alpha+2}$,
$\theta = \frac{n d}{n d + 2(s+1)(\alpha - n +2)}$ and  $\alpha > n -2$, thanks to  Young's inequality in the form:
\[
a b \leqslant \eps a^p + C(\eps)b^{p'}  \qquad \forall \, a, b>0, \qquad  \tfrac{1}{p}+ \tfrac{1}{p'} = 1,
\] 
%  We recall the interpolation inequality  (see \cite[Lemma A.1]{DNLST24}):
%$$
%\| w\|_{L^2(\Omega)} \leqslant C \| \Dusm w \|_{L^2(\Omega)}^{\theta} \|w\|^{1 -\theta}_{L^{\frac{2(\alpha - n+2)}{\alpha+2}}(\Omega)}
%+ C \|w\|_{L^{\frac{2(\alpha - n+2)}{\alpha+2}}(\Omega)},
%$$
%where $\theta = \frac{n d}{n d + 2(s+1)(\alpha - n +2)}$ and  $\alpha > n -2$.
%Therefore, thanks to  Young's inequality in the form:
%\[
%a b \leqslant \eps a^p + C(\eps)b^{p'}  \qquad \forall \, a, b>0, \qquad  \tfrac{1}{p}+ \tfrac{1}{p'} = 1,
%\]
we have, with obvious choices of $a, b$ and $\eps, p$,
\[
\begin{split}
 \int_{\Omega}u^{\alpha+2}\d x = \int_\Omega w^2 \d x & = \norm{w}_{L^2(\Omega)}^2
  \leqslant C \| \Dusm w \|_{L^2(\Omega)}^{2\theta} \|w\|^{2(1 -\theta)}_{L^{\frac{2(\alpha - n+2)}{\alpha+2}}(\Omega)}
+ C \|w\|_{L^{\frac{2(\alpha - n+2)}{\alpha+2}}(\Omega)}^2 \\
& \quad\leqslant \tfrac{2}{ (\alpha +2)^2 } \norm{\Dusm\left(u^{\frac{\alpha+2}{2}}\right)}^2_{L^2(\Omega)} + C\left(\int_{\Omega}u^{\alpha-n+2}\d x\right)^{\frac{\alpha +2}{\alpha-n+2}}.
\end{split}
\]
As a result, (\ref{eq:alpha_entropy_2_globalt-new}) becomes
\begin{equation}\label{eq:entropy_est_prelim}
\begin{split}
\int_\Omega \mathscr{G}_{\alpha}^{\eps,\delta}(u(t)) \d x     + \tfrac{2}{ (\alpha +2)^2 } \iint_{\Omega_t} \abs{\Dusm (u^{\frac{\alpha +2}{2}} ) }^2   \d x \d \tau    \\
\qquad \qquad \qquad \leqslant \int_\Omega \mathscr{G}_{\alpha}^{\eps,\delta}(u_{0,\eps,\delta}) \d x + C \int_0^t { \left( \int_\Omega \mathscr{G}^{\eps,\delta}_{\alpha}(u)   \d x  \right)^{\frac{\alpha +2}{\alpha - n + 2}} \d \tau}.
\end{split}
\end{equation}
Observe that since $\alpha >n -2$ by assumption, then $\frac{\alpha + 2}{\alpha-n +2}>1$. Therefore,
Gr\"{o}nwall's lemma gives
$$
\int_{\Omega} \mathscr{G}_{\alpha}^{\eps,\delta}(u)\d x \leqslant 2^{\frac{\alpha-n+2}{n}}\int_{\Omega}\mathscr{G}^{\eps,\delta}_{\alpha}(u_{0,\eps,\delta})\d x 
$$
for any $t\in [0, T^*]$, where
$$
T^*:= \frac{ \alpha-n+2 }{2 C n \left(\int_\Omega \mathscr{G}_{\alpha}^{\eps,\delta}(u_{0,\eps,\delta})\d x\right)^{\frac{n}{\alpha-n+2}}}.
$$
On the same time interval
\[
\begin{split}
\int_{0}^t \left(\int_{\Omega}\mathscr{G}_{\alpha}^{\eps,\delta}(u(\tau))\d x\right)^{\frac{\alpha + 2}{\alpha-n +2}}\d \tau & \leqslant 2^{\frac{\alpha +2}{n}}T^{*}\left(\int_{\Omega}\mathscr{G}_{\alpha}^{\eps,\delta}(u_{0,\eps,\delta})\d x\right)^{\frac{\alpha + 2}{\alpha-n +2}}\\
&\quad =\tfrac{2^{\frac{\alpha-n+2}{n}}(\alpha-n+2) }{C\, n} \int_{\Omega}\mathscr{G}_{\alpha}^{\eps,\delta}(u_{0,\eps,\delta})\d x.
\end{split}
\]
Therefore, from \eqref{eq:entropy_est_prelim} we arrive at (\ref{eq:alpha_entropy_est2}) with
$$
C_g =   \tfrac{2^{\frac{\alpha-n+2}{n}}(\alpha-n+2) }{C\, n} + 1 .
$$
%To conclude with \eqref{eq:alpha_entropy_est2} and with \eqref{eq:alpha_entropy_2_globalt} we have to remove the approximation. We can do it using Lemma \ref{lemma:convergence_entropy} together with the semicontinuity of norms with respect to weak convergence.
\end{proof}

\subsection{Proof of the Theorem \ref{th:ex}: Existence of $\alpha$-entropy solutions.}

We refine our analysis by combining the uniform estimate from Proposition \ref{prop:estimates_eps_delta}, 
as given in \eqref{eq:est_old}, with the $\alpha$-entropy estimate established in Proposition \ref{prop:alpha_entropy_est}.

We recall that the exponent $n$ lies in the range
\begin{equation}
\label{eq:interval_n}
n\in ( \tfrac{2(s+1)}{ 4(s+1)-d }, \tfrac{ d + 2(1-s)}{(d-2s)_+} + \tfrac{1}{2}),
\end{equation}
and that estimate  \eqref{eq:alpha_entropy_2_globalt} holds for any $\alpha$ such that 
\begin{equation}
\label{eq:rangealpha}
\alpha \in \left(-1 + \tfrac{d}{4(s+1) - d} ,1 \right].  
% {\color{blue}  Here, \min\{ 1, \tfrac{4s}{d-2s} \} = 1 \text{ as } s > \tfrac{d-2}{2} }
\end{equation} 
% 
%
%To have \ref{prop:alpha_entropy_est-new} useful for the limiting process we have to tune $\alpha$ in such a way that 
%\begin{equation}
%\label{eq:rhs_alpha_entro}
%\iint_{\Omega_T}u_{\eps,\delta}^{\alpha+2}\d x\d t\xrightarrow{(\eps,\delta)\to (0,0)}\iint_{\Omega_T} u^{\alpha+2}\d x \d t.
%\end{equation}
We recall that $u_{\eps,\delta}$ verifies 
\begin{equation}
\label{eq:weak_eps_delta}
\int_{0}^T  \langle \partial_t u_{\eps,\delta} , v \rangle_{(W^{1,q}(\Omega))^*, W^{1,q}(\Omega)} \, \d t = \iint_{\Omega_T} J(u_{\eps,\delta})\cdot\nabla v\,\d x \,\d t ,
\end{equation}
where $J_{\eps,\delta}$ is defined weakly as 
\begin{equation}
\label{eq:flux_eps_delta}
\iint_{\Omega_T} J_{\eps,\delta}(u_{\eps,\delta})\cdot {\bf V} \d x \d t = -\iint_{\Omega_T} p_{\eps,\delta}\nabla(m_{\eps,\delta}(u_{\eps,\delta}))\cdot {\bf V} \d x\d t - \iint_{\Omega_T} p_{\eps,\delta}m_{\eps,\delta}(u_{\eps,\delta})\textrm{div} {\bf V} \d x \d t 
\end{equation}
for any ${\bf V} \in C^{\infty}(\Omega_T)$.
Thanks to Proposition \ref{prop:estimates_eps_delta} and standard compactness results, there exist functions
$u$ and  $p = (-\Delta)^s u$, along with a (non-relabelled) subsequence of $\eps,\delta$ such that  
\begin{equation}
\label{eq:conv_eps_delta}
\begin{split}
& u_{\eps,\delta}\xrightarrow{\eps,\delta \to 0}u \quad \textrm{ strongly in } C^{0}([0,T];L^p(\Omega))\,\,\,\,\forall p< p_s := \tfrac{2d}{d-2s},\\
& \nabla u_{\eps,\delta} \xrightarrow{\eps,\delta \to 0}\nabla u \quad \textrm{ strongly in } L^2(0,T;H^{r_0}(\Omega))\,\,\forall r_0<s,\\
& p_{\eps,\delta}\xrightarrow{\eps,\delta \to 0} p \quad \textrm{ strongly in } L^2(0,T; H^{1-r_0}(\Omega))\,\,\forall r_0\in (s,1].
\end{split}
\end{equation}
Since 
\[
\int_{0}^T  \langle \partial_t u_{\eps,\delta} , v \rangle_{(W^{1,q})^{*}, W^{1,q}} \, \d t  = -\iint_{\Omega_T}u_{\eps,\delta}\partial_t v \,\d x,\d t -\int_{\Omega}u_{0,\eps,\delta} v(x,0)\,\d x, 
\]
we easily obtain that 
\begin{equation}
\label{eq:limit_lhs}
\lim_{(\eps,\delta)\to (0,0)}\int_{0}^T  \langle \partial_t u_{\eps,\delta} , v \rangle_{(W^{1,q})^{*}, W^{1,q}} \, \d t = -\iint_{\Omega_T}u\,\partial_t v \,\d x\d t -\int_{\Omega}u_0(x) v(x,0)\,\d x.
\end{equation}
Now, we pass to the limit in the r.h.s. of \eqref{eq:weak_eps_delta}, namely in 
\[
\label{eq:rhs_ex}
\begin{split}
-\iint_{\Omega_T} J(u_{\eps,\delta})\cdot\nabla v\d x \d t &= \iint_{\Omega_T} p_{\eps,\delta}\nabla(m_{\eps,\delta}(u_{\eps,\delta}))\cdot \nabla v\d x \d t + \iint_{\Omega_T} p_{\eps,\delta}m_{\eps,\delta}(u_{\eps,\delta})\Delta v \d t \d x \\
&= I_{1}(\eps,\delta) + I_2(\eps,\delta).
\end{split}
\]
To this end note that possibly extracting a further subsequence, we can assume that 
$u_{\eps,\delta}	\to u$, $\nabla u_{\eps,\delta}\to \nabla u$ and $p_{\eps,\delta}\to p$ almost everywhere in $\Omega_T$. Therefore, 
\begin{equation}
\label{eq:ae}
\begin{split}
&p_{\eps,\delta} \nabla( m_{\eps,\delta}(u_{\eps,\delta}))\cdot \nabla v \xrightarrow{(\eps,\delta)\to (0,0) }n\,p \,u^{n-1} \nabla u\cdot \nabla v\qquad \textrm{a.e. in }\Omega_T,\\
& p_{\eps,\delta}m_{\eps,\delta}(u_{\eps,\delta})\Delta v \xrightarrow{(\eps,\delta)\to (0,0) } p \,u^n \Delta v\qquad \textrm{a.e. in }\Omega_T.
\end{split}
\end{equation}
Recall that
\begin{equation}
\label{eq:m'}
m_{\eps,\delta}'(y) = \frac{y^{-1}(n y^{\beta} + \varepsilon \beta y^n)}{y^{\beta} + \varepsilon y^n + \delta y^{n+\beta}}m_{\eps,\delta}(y)
\leqslant \beta \, y^{-1} m_{\eps,\delta}(y) \leqslant \beta \, y^{n-1},
\end{equation}
%Now, integrate by parts and obtain
%\[
%\begin{split}
%-\iint_{\Omega_T} J(u_{\eps,\delta})\cdot\nabla v\d x \d t = \iint_{\Omega_T} p_{\eps,\delta}\nabla(m_{\eps,\delta}(u_{\eps,\delta}))\cdot \nabla v + \iint_{\Omega_T} p_{\eps,\delta}m_{\eps,\delta}(u_{\eps,\delta})\Delta v.
%\end{split}
%\]
and that 
the $\alpha$-entropy estimate allows to control
\[
\iint_{\Omega_T}\abs{(-\Delta)^{\frac{1+s}{2} }u_{\eps,\delta}^{\frac{\alpha+2}{2}}}^2 = \int_{0}^T \norm{u^{\frac{\alpha+2}{2}}_{\eps,\delta}}_{H^{1+s}(\Omega)}^2 \d t = \int_{0}^T\norm{\nabla u_{\eps,\delta}^{\frac{\alpha+2}{2}}}^2_{\dot H^s(\Omega)} \d t,
\]
then
\begin{equation}
\label{eq:entro1}
\norm{\nabla u_{\eps,\delta}^{\frac{\alpha+2}{2}}}_{L^2(0,T;L^{p_s}(\Omega))}\leqslant C.
\end{equation}
Thus, we rewrite
\[
\begin{split}
\nabla m_{\eps,\delta}(u_{\eps,\delta})&= \tfrac{2}{\alpha+2}m_{\eps,\delta}'(u_{\eps,\delta}) u_{\eps,\delta}^{-\frac{\alpha}{2}}\nabla (u_{\eps,\delta}^{\frac{\alpha+2}{2}})\\
& = \tfrac{2}{\alpha+2} u_{\eps,\delta}^{n-1-\frac{\alpha}{2}} \tfrac{m'_{\eps,\delta}(u_{\eps,\delta})}{u^{n-1}}\nabla (u_{\eps,\delta}^{\frac{\alpha+2}{2}}),
\end{split}
\]
and note that since $n >\tfrac{2(s+1)}{4(s+1)-d}=\tfrac{1}{2} + \tfrac{d}{2(4(s+1)-d)}$ we have that $n-1-\frac{\alpha}{2}   >  0$. 
%On the other hand, if $n < p_s + 1 + \alpha/2$, we have that 
%$u_{\eps,\delta}^{n-1-\alpha/2}  \in L^\infty(0,T; L^{q}(\Omega))$ for  $1 < q \leqslant \frac{p_s}{n-1 -\alpha/2}$. 
The term $I_1(\eps,\delta)$ can be rewritten as 
\[
\begin{split}
I_{1}(\eps,\delta) &= \tfrac{2}{\alpha+2} \iint_{\Omega_T} {p_{\eps,\delta} \, u_{\eps,\delta}^{n-1-\frac{\alpha}{2}}\tfrac{m'_{\eps,\delta}(u_{\eps,\delta})}{u_{\eps,\delta}^{n-1}}\nabla (u_{\eps,\delta}^{\frac{\alpha+2}{2}})\cdot \nabla v \, \d x \d t}\\
& \leqslant C\norm{p_{\eps,\delta}}_{L^2(0,T;L^{r'}(\Omega))}\norm{\nabla m_{\eps,\delta}(u_{\eps,\delta})}_{L^2(0,T;L^{r}(\Omega))},
\end{split}
\]
where $r = \frac{2d}{d + 2(1 -r_0)}$ and $r' = \frac{2d}{d - 2(1 -r_0)}$ is the Sobolev immersion exponent of $H^{1-r_0}(\Omega) \hookrightarrow L^r(\Omega)$. 
Then we take $r_1 = p_s $ and  $r_2 = \frac{d}{1+s-r_0}$ in such a way that $\frac{1}{r} = \frac{1}{r_1} + \frac{1}{r_2}$. 
To have a useful estimate, we need that 
\begin{equation}
\label{eq:needed}
r_2 (n-1-\tfrac{\alpha}{2}) \leqslant  p_s.
\end{equation}
 As $n<\tfrac{d + 2(1-s)}{(d-2s)_+} + \frac{1}{2}$ by assumption, we can tune $\alpha$ in \eqref{eq:rangealpha} in such a way that $1 + \frac{\alpha}{2}  < n \leqslant 1 + \frac{\alpha}{2} + \frac{2(1+s-r_0)}{(d-2s)_+} < \frac{d+2(1-s)}{(d-2s)_+}+ \frac{\alpha}{2}$, which implies \eqref{eq:needed}.

Therefore, due to \eqref{eq:m'}, \eqref{eq:entro1} and  \eqref{eq:conv_eps_delta}, we get that there exists a constant $C$ independent of $\eps,\delta$ such that 
\[
\begin{split}
\int_{0}^T \| \nabla m_{\eps,\delta}(u_{\eps,\delta})\|^2_{L^{r}(\Omega)}  \d t &\leqslant
C \int_{0}^T \|  u_{\eps,\delta}^{n-1-\frac{\alpha}{2}} \nabla (u_{\eps,\delta}^{\frac{\alpha+2}{2}})\|^2_{L^{r}(\Omega)}  \d t
  \\
&  \leqslant C \int_0^T \| \nabla (u_{\eps,\delta}^{\frac{\alpha+2}{2}}) \|^2_{L^{r_1}(\Omega)} \| u_{\eps,\delta}^{n-1-\frac{\alpha}{2}} \|^2_{L^{r_2}(\Omega)} \d t  \leqslant C.
\end{split}
\]
%
%
%
%
%
%%
%In fact, IF $n< \frac{2d}{d-2s}+\alpha/2$ we have that there exists $q\in (\frac{p_s}{p_s-1}, \frac{p_s}{n-1-\alpha})$ (where $p_s = \frac{2d}{d-2s}$). This $q$ satisfies by construction 
%\[
%\frac{1}{q}+\frac{1}{p_s}<1.
%\]
%Therefore, we just have to take $r:= \frac{1}{\frac{1}{q}+\frac{1}{p_s}}$. 
%
% 
Consequently, we have that along a further subsequence
\[
 \nabla m_{\eps,\delta}(u_{\eps,\delta})\xrightarrow{(\eps,\delta) \to (0,0)} \mathfrak{V} \quad \textrm{ weakly in } L^2(0,T; L^{r}(\Omega )).
\] 
On the other hand, since $ \nabla m_{\eps,\delta}(u_{\eps,\delta})\xrightarrow{(\eps,\delta) \to (0,0)}  n u^{n-1} \nabla u$ almost everywhere in $\Omega_T$, 
we identify $\mathfrak{V}= n u^{n-1} \nabla u$, i.\,e.
\begin{equation}\label{rrt-1}
  \nabla m_{\eps,\delta}(u_{\eps,\delta})\xrightarrow{(\eps,\delta) \to (0,0)} n u^{n-1} \nabla u \quad \textrm{ weakly in } L^2(0,T; L^{r}(\Omega )).
\end{equation}
Now, since \eqref{eq:conv_eps_delta} implies that 
\begin{equation}\label{rrt-2}
 p_{\eps,\delta}\xrightarrow{\eps,\delta \to 0} p \quad \textrm{ strongly in } L^2(0,T; L^{r'}(\Omega)), \ r' = \tfrac{2d}{d - 2(1 -r_0)}, 
\end{equation}
we conclude as $I_{1}(\eps,\delta)$ contains the product of a weakly converging sequence and of a strongly convergent sequence. Therefore,
%
%Using the following  calculus result (yes, this part can be omitted):
%\begin{multline*}
%\text{ if }  f_n(x) \mathop {\to} \limits_{n \to +\infty} f_0(x) \text{ weakly in } L^q(X) \text{ and } 
%g_n(x) \mathop {\to} \limits_{n \to +\infty} g_0(x) \text{ strongly in } L^p(X),    \\
%\text{ where } \tfrac{1}{p} + \tfrac{1}{q} =1,
%\text{ then } \int_X  f_n(x) g_n(x) \d x \mathop {\to} \limits_{n \to +\infty} \int_X  f_0(x) g_0(x) \d x,   
%\end{multline*}
%by (\ref{eq:conv_eps_delta}), i.\,e.
%
%and (\ref{rrt-1}) we conclude that 
\[
\lim_{(\eps,\delta)\to (0,0)}I_{1}(\eps,\delta) = n  \iint_{\Omega_T}p  \, u^{n-1} \nabla u \cdot \nabla v\,\d x \d t. 
\]
The identification of $\lim_{(\eps,\delta)\to (0,0)}I_{2}(\eps,\delta)$ proceeds in a similar way. Indeed, we recall that 
$m_{\eps,\delta}(u_{\eps,\delta})\leqslant   \abs{u_{\eps,\delta}}^n$ almost everywhere in $\Omega_T$. 
Therefore,
$$
\abs{ m_{\eps,\delta}(u_{\eps,\delta}) p_{\eps,\delta} }  \leqslant   
  \abs{u_{\eps,\delta}}^{n}  \abs{p_{\eps,\delta}}.   
$$
As $u_{\eps,\delta} \in L^\infty (0,T; L^{p_s}(\Omega)) \cap L^{\alpha + 2} (0,T; L^{\infty}(\Omega))$ then by space-time interpolation 
we have
\begin{equation}\label{rrt-3}
u_{\eps,\delta} \in L^q (0,T; L^{p}(\Omega)) \text{ with } p = \tfrac{p_s q}{ q-\alpha -2 }, \ q > \alpha +2.
\end{equation}
We choose $q = 2n$. Clearly, $q>\alpha+2$ as the range for $n$ implies that $n> \frac{\alpha}{2}  +1$. 
Then we fix $p = n r$ with $r = \frac{2d}{d + 2(r_0-1)}$, and therefore we need that 
\[
p = nr \leqslant \tfrac{p_s q}{ q-\alpha -2 },
\]
namely, 
\[
n -\tfrac{\alpha}{2} -1 \leqslant \tfrac{d + 2(1-r_0)}{d-2s}.
\]
As before, this inequality is satisfied thanks to \eqref{eq:interval_n} and to \eqref{eq:rangealpha}.
Therefore,
$$
\int_{0}^T \|   m_{\eps,\delta}(u_{\eps,\delta})   \|^2_{L^{r}(\Omega)}  \d t \leqslant
\int_{0}^T \|  u_{\eps,\delta}  \|^{2n}_{L^{ nr}(\Omega)}  \d t
\leqslant    C.
$$
%
%
%
%
%
%
%%By (\ref{rrt-3}) with $q = 2n$ and $p = nr $, we find that
%$$
%\int_{0}^T \|   m_{\eps,\delta}(u_{\eps,\delta})   \|^2_{L^{r}(\Omega)}  \d t \leqslant
%\int_{0}^T \|  u_{\eps,\delta}  \|^{2n}_{L^{ nr}(\Omega)}  \d t
%\leqslant    C,
%$$
%where $r = \frac{2d}{d - 2(r_0-1)}$,  provided $1 + \alpha/2 < n \leqslant  1 + \alpha/2 + \frac{p_s}{r} < \frac{d+2(1-s)}{d-2s}+ 1 + \alpha/2$.
%
As a result,
\begin{equation}\label{rrt-4}
 m_{\eps,\delta}(u_{\eps,\delta})   \xrightarrow{(\eps,\delta) \to (0,0)}  u^{n}  \quad \textrm{ weakly in } L^2(0,T; L^{r}(\Omega )) .
\end{equation}
Then, since   
$ m_{\eps,\delta}(u_{\eps,\delta}) p_{\eps,\delta} \xrightarrow{(\eps,\delta)\to (0,0)}u^n p $ almost everywhere in $\Omega_T$, due to 
(\ref{rrt-2}) and (\ref{rrt-4}),
using the (generalized) Lebesgue Theorem of dominated convergence, we conclude
that 
\[
\lim_{(\eps,\delta)\to (0,0)}I_{2}(\eps,\delta) = \iint_{\Omega_T} u^n p \Delta v\, \d x \d t.
\]
Summing up, we have that $(u,p)$ satisfies
\[
 -\iint_{\Omega_T}u\,\partial_t v \,\d x\d t -\int_{\Omega}u_0(x) v(x,0)\,\d x = n \iint_{\Omega_T}p\, u^{n-1} \nabla u \cdot \nabla v\d x \d t +  \iint_{\Omega_T} p\, u^n \Delta v \d x \d t,
\]
namely, $u$ is a weak solution.

\subsubsection{Limits $\eps,\delta \to 0$ in the $\alpha$-entropy and energy estimate}
\label{ssec:limit_entropy_energy}
In this subsection, we show that the weak solution constructed above satisfies the $\alpha$-entropy estimate \eqref{eq:alpha-entropy-u1} 
as well as the energy estimate \eqref{eq:ineq}. 

We start with the $\alpha$-entropy estimate. 
As $\{ u_{\eps,\delta}\}_{\varepsilon, \delta > 0}$ is uniformly bounded in 
$L^\infty(0,T;H^{s}(\Omega))$ and $\{\partial_t u_{\eps,\delta}\}_{\varepsilon, \delta > 0}$  is uniformly bounded in $L^2(0,T;\left(W^{1,q}(\Omega)\right)^*)$ with $q\geqslant 2$ (see Proposition \ref{prop:estimates_eps_delta}) then, by Aubin-Lions compactness lemma, we have 
\begin{equation}\label{eq:rrr-1}
u_{\eps,\delta} \xrightarrow{(\eps,\delta)\to (0,0)} u  \qquad \text{ strongly in } L^{2}(\Omega_T)  \textrm{ and a.e. in } \Omega_T. 
\end{equation} 
Since  $z^{\frac{\alpha +2}{2}}$ is continuous in $[0, +\infty)$ for $\frac{\alpha +2}{2}  > 0$ then 
\begin{equation}\label{eq:rrr-2}
u_{\eps,\delta}^{\frac{\alpha +2}{2}} \xrightarrow{(\eps,\delta)\to (0,0)} u^{\frac{\alpha +2}{2}}  \qquad   \textrm{ a.e. in } \Omega_T. 
\end{equation} 
Note that by \eqref{eq:alpha_entropy_2_globalt} we obtain $\{ u^{\frac{\alpha +2}{2}}_{\eps,\delta}\}_{\varepsilon, \delta > 0}$ is uniformly bounded in 
$L^2(0,T;H^{1+s}(\Omega))$, and by (\ref{eq:energy_esti2}) we have $\{ u^{\frac{\alpha +2}{2}}_{\eps,\delta}\}_{\varepsilon, \delta > 0}$ is uniformly bounded in 
$L^\infty(0,T;L^{l}(\Omega))$ with $2 < l \leqslant \frac{4d}{(d-2s)_+(\alpha +2)}$, i.\,e. $\alpha < \frac{4s}{(d-2s)_+}$.
From here, we obtain that  $\{ u^{\frac{\alpha +2}{2}}_{\eps,\delta}\}_{\varepsilon, \delta > 0}$ is uniformly bounded in 
$L^m( \Omega_T)$ for some $2 < m \leqslant l + 2 - \frac{ l(d-2(s+1))_+}{d}$. Using this boundedness, we arrive at 
\begin{equation}\label{eq:rrr-3}
 \{ u^{\frac{\alpha +2}{2}}_{\eps,\delta}\}_{\varepsilon, \delta > 0} \text{ is uniformly  integrable in }
 L^2( \Omega_T) .  
\end{equation}
So, due to (\ref{eq:rrr-2}) and (\ref{eq:rrr-3}), we can apply Vitali's convergence theorem and obtain that
\begin{equation}\label{eq:rrr-4}
u_{\eps,\delta}^{\frac{\alpha +2}{2}} \xrightarrow{(\eps,\delta)\to (0,0)} u^{\frac{\alpha +2}{2}}  \qquad   \text{ strongly in } L^{2}(\Omega_T). 
\end{equation} 
Consequently,
\[
\lim_{(\eps,\delta)\to (0,0)}\iint_{\Omega_T}u_{\eps,\delta}^{\alpha+2}(x,t)\d x \d t= \iint_{\Omega_T}u^{\alpha+2}(x,t)\d x\d t.
\]
The convergence above is sufficient to control the right-hand side in the $\alpha$-entropy estimate. 
However, in the running assumption, we have more, namely a strong convergence in $L^2(0,T;H^1(\Omega))$
for $u_{\eps,\delta}^{\frac{\alpha+2}{2}}$.
In fact, we recall that (see \cite[Lemma 9, p. 85]{simon87})
%%%%
% 1 way
%
%From Fr\'{e}chet-Kolmogorov's theorem and (\ref{eq:rrr-4}) we infer that 
%$$
%\mathop {\lim} \limits_{h \to 0} \| u_{\eps,\delta}^{\frac{\alpha +2}{2}}(t+h,x)  - u_{\eps,\delta}^{\frac{\alpha +2}{2}}(t,x)   \|_{L^2(0,T-h; L^2(\Omega))} =0
%$$
%uniformly for $\eps > 0$ and $\delta > 0$. Then, applying \cite[Theorem 5, p.84]{simon87} with $p = 2$, $X= H^{s+1}(\Omega)$,
%$B = H^{1}(\Omega)$ and $Y= L^2(\Omega)$, we get
%\begin{equation}\label{eq:rrr-5}
%u_{\eps,\delta}^{\frac{\alpha +2}{2}} \xrightarrow{(\eps,\delta)\to (0,0)} u^{\frac{\alpha +2}{2}}  \qquad  
%\text{ strongly in } L^{2}(0,T; H^1(\Omega)). 
%\end{equation}  
%%%
%
\begin{lemma}\label{simon-lem9}
Let $X \subset B \subset Y$ be Banach spaces with compact embedding $X \hookrightarrow B$ and let $p\in[1,+\infty] $. 
Let $\mathfrak{F}$ be bounded a subset of $L^p(0,T;Y)$ that is bounded in $ L^p(0,T; X) $ and relatively compact in $L^p(0,T; Y)$. 
Then $ \mathfrak{F} $ is relatively compact in $L^p(0,T; B)$.
\end{lemma}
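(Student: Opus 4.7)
The plan is to reduce the claim to a classical Ehrling--Lions interpolation inequality, which transfers compactness from the weakest Banach space $Y$ to the intermediate space $B$, paying for it with the uniform bound in the strongest space $X$.

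First I would establish that, thanks to the compact embedding $X\hookrightarrow B$ and the continuous embedding $B\hookrightarrow Y$, for every $\eta>0$ there exists $C(\eta)>0$ such that
\begin{equation*}
\|v\|_B \leqslant \eta\,\|v\|_X + C(\eta)\,\|v\|_Y \qquad \forall\, v\in X.
\end{equation*}
This is the classical Ehrling--Lions lemma, proved by contradiction: were the inequality to fail for some $\eta_0>0$, one would find a sequence $(v_k)\subset X$ with $\|v_k\|_B=1$, $\|v_k\|_X\leqslant 1/\eta_0$, and $\|v_k\|_Y\to 0$; the compact embedding $X\hookrightarrow B$ would produce a $B$-convergent subsequence whose limit must be $0$ by $B\hookrightarrow Y$, contradicting $\|v_k\|_B=1$.

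Next, applying this inequality pointwise in $t$ to $v=u_n(t)-u_m(t)$ with $u_n,u_m\in\mathfrak{F}$, and then taking the $L^p(0,T)$-norm (a genuine norm for $p\in[1,+\infty]$), Minkowski's inequality yields
\begin{equation*}
\|u_n-u_m\|_{L^p(0,T;B)} \leqslant \eta\,\|u_n-u_m\|_{L^p(0,T;X)} + C(\eta)\,\|u_n-u_m\|_{L^p(0,T;Y)}.
\end{equation*}
The $L^p(0,T;X)$-boundedness of $\mathfrak{F}$ dominates the first term by a fixed multiple of $\eta$, while relative compactness of $\mathfrak{F}$ in $L^p(0,T;Y)$ lets me extract a subsequence $(u_{n_k})$ that is Cauchy in $L^p(0,T;Y)$. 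Given $\varepsilon>0$, I would first choose $\eta$ small enough to make the $X$-term smaller than $\varepsilon/2$, and then take $k,\ell$ large so that the $Y$-term is smaller than $\varepsilon/2$. The subsequence is therefore Cauchy in $L^p(0,T;B)$, which is complete, giving the claimed relative compactness.

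I do not foresee substantial obstacles: the one delicate point is the Ehrling--Lions lemma itself, and even there the contradiction argument is routine once one notices that the limit produced by the compact embedding lies in $B$ and hence in $Y$, so that the $Y$-vanishing of the sequence forces it to vanish in $B$ as well. The case $p=+\infty$ is treated identically, replacing $L^p(0,T)$-norms by essential suprema.
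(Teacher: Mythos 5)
Your proof is correct, and it is essentially the canonical argument due to Simon — the paper does not reprove this lemma but simply cites \cite[Lemma 9, p.~85]{simon87}, and Simon's own proof proceeds exactly as you describe: first the Ehrling--Lions interpolation inequality (Simon's Lemma 8), then the pointwise-in-time application, Minkowski in $L^p(0,T)$, and the two-step choice of $\eta$ followed by the Cauchy subsequence extraction. All the details check out: the contradiction argument for Ehrling--Lions correctly uses that $v_k\to 0$ in $Y$ and the injectivity of $B\hookrightarrow Y$ to force the $B$-limit to vanish, and the order of quantifiers in the final step (fix the subsequence Cauchy in $L^p(0,T;Y)$ once, then for each $\varepsilon$ pick $\eta$ and then $k,\ell$) is handled properly so that the same subsequence works for every $\varepsilon$.
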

Therefore, applying  Lemma~\ref{simon-lem9} with the choices $\mathfrak{F}= \bigl\{u_{\eps,\delta}^{\frac{\alpha+2}{2}}\bigr\}_{\varepsilon, \delta > 0}$, $p = 2$, $X= H^{s+1}(\Omega)$, $B = H^{1}(\Omega)$ and $Y= L^2(\Omega)$, we get
\begin{equation}\label{eq:rrr-5}
u_{\eps,\delta}^{\frac{\alpha +2}{2}} \xrightarrow{(\eps,\delta)\to (0,0)} u^{\frac{\alpha +2}{2}}  \qquad  
\text{ strongly in } L^{2}(0,T; H^1(\Omega)). 
\end{equation}

We continue with the limit procedure in the $\alpha$-entropy estimate dealing with the left-hand side. 
Now recall that (see \eqref{eq:alpha-entropy-approx})
\[
 \mathscr{G}^{\eps,\delta}_{\alpha}(z) = \mathscr{G}_{\alpha}(z) + \tfrac{\eps z^{\alpha - \beta + 2}}{(\alpha - \beta +2)(\alpha -\beta +1)} + \tfrac{\delta z^{\alpha+2}}{(\alpha+2)(\alpha +1)}, \text{ where } \beta > \alpha +2, \ \alpha > -1,
\]
Thus,  observe that $(\alpha - \beta +2)(\alpha -\beta +1)>0$, for almost all $t\in (0,T)$
\[
\mathscr{G}^{\eps,\delta}_{\alpha}(u_{\eps,\delta}(x,t))\geqslant \mathscr{G}_{\alpha}(u_{\eps,\delta}(x,t))\qquad \textrm{ a.e. in }\Omega. 
\] 
Therefore,  Fatou's Lemma implies
\[
\liminf_{(\eps,\delta)\to (0,0)}\int_{\Omega}\mathscr{G}^{\eps,\delta}_{\alpha}(u_{\eps,\delta}(x,t))\d x\geqslant \int_{\Omega}\mathscr{G}_{\alpha}(u_{\eps,\delta}(x,t))\d x.
\]
Using \eqref{H1p}, \eqref{Hsc-0} and \eqref{Hsc} and reasoning as in \cite[Section 4.4]{DNLST24}, we obtain 
\[
\lim_{(\eps,\delta)\to (0,0)}\int_{\Omega}\mathscr{G}^{\eps,\delta}_{\alpha}(u_{0,\eps,\delta})\d x = \int_{\Omega}\mathscr{G}_{\alpha}(u_{0})\d x.
\] 
Thus, thanks to the semicontinuity of norms w.r.t. weak convergence, we deduce that $u$ verifies 
\begin{equation}
\label{eq:alpha-entropy-u}
\int_{\Omega } \mathscr{G}_{\alpha}(u)   \d x  + \tfrac{4}{ (\alpha +2)^2}  \iint_{\Omega_T} \abs{\Dusm (u^{\frac{\alpha +2}{2}} ) }^2 \d x \d t \leqslant
\int_{\Omega } \mathscr{G}_{\alpha}(u_{0}) \d x  + C_1 \iint_{\Omega_T} { u^{\alpha +2 }  \d x \d t } .
\end{equation}

We now discuss the validity of energy estimate \eqref{eq:ineq}. To this end note that, as in \cite[Subsection 4.4.3]{DNLST24} $u_{\eps,\delta}$ verifies 
\begin{equation}
\label{eq:energy_esti2a}
	\norm{u_{\eps,\delta}(\cdot,t)}^{2}_{\dot{H}^s(\Omega)}
	+ 2 \iint_{\Omega_t}  \abs{g_{\varepsilon, \delta}}^2 \,\d x \,\d \tau
	\leqslant   \norm{u_{0,\eps,\delta}}^2_{\dot{H}^s(\Omega)} \qquad\text{for all } \,t\in (0,T],
\end{equation}
where the vector field $g_{\eps,\delta}$ is identified as 
\[
g_{\eps,\delta}=
\begin{cases}
m_{\eps,\delta}^{1/2}(u_{\eps,\delta})\nabla p_{\eps,\delta} &\qquad \textrm{ on } \,\left\{(x,t) \in \Omega_T: \, u_{\eps,\delta}(x,t) >0\right\},\\
0&\qquad \textrm{ on } \left\{(x,t) \in \Omega_T:\, u_{\eps,\delta}(x,t)=0\right\}.
\end{cases}
\]
Therefore, \eqref{eq:ineq} easily follows by semicontinuity arguments. The justification of (\ref{eq:pseudo_flux_weak}) is as in \cite[Subsection 4.4.3]{DNLST24}.

Next, we  prove the identification (\ref{eq:pseudo_flux}) for the pseudo-flux $g$, under the hypothesis that $n > \frac{4(s+1)}{4(s+1) - d}$.
We observe that $m^{1/2}_{\eps,\delta}$ is Lipschitz continuous and that $m'_{\eps,\delta }(0)=0$. 
Let us fix $\bm{\phi} \in C_c^\infty ({\Omega}_T;\R^d)$. 
We can write
$$ 
\iint_{\Omega_T} \nabla(m^{1/2}_{\eps,\delta}(u_{\eps,\delta}))\cdot \bm{\phi} \,\d x \,\d t = 
 \tfrac{1}{2}\iint_{\Omega_T}m^{-1/2}_{\eps,\delta}(u_{\eps,\delta})m'_{\eps,\delta}(u_{\eps,\delta})\nabla u_{\eps,\delta}\cdot \bm{\phi} \,\d x \,\d t .$$
Then we write
\begin{align*} 
\iint_{\Omega_T}m^{1/2}_{\eps,\delta}(u_{\eps,\delta})\nabla p_{\eps,\delta}\cdot\bm{\phi} \,\d x \,\d t &=
-\iint_{\Omega_T} {m^{1/2}_{\eps,\delta}(u_{\eps,\delta})p_{\eps,\delta} \div\bm{\phi} \,\d x \, \d t}\\ 
&\quad - \tfrac{1}{2}\iint_{\Omega_T} { m^{-1/2}_{\eps,\delta}(u_{\eps,\delta})m'_{\eps,\delta}(u_{\eps,\delta})p_{\eps,\delta} \nabla u_{\eps,\delta} \cdot \bm{\phi} \,\d x \,\d t}\\
&  = -\iint_{\Omega_T} {m^{1/2}_{\eps,\delta}(u_{\eps,\delta})p_{\eps,\delta} \div\bm{\phi} \,\d x \, \d t}\\
& \quad - \tfrac{1}{\alpha +2}\iint_{\Omega} u_{\eps,\delta}^{\frac{n}{2}-1-\frac{\alpha}{2} }\tfrac{m'_{\eps,\delta}(u_{\eps,\delta})}{( m_{\eps,\delta}(u_{\eps,\delta}) u_{\eps,\delta}^{n-2})^{1/2}}
p_{\eps,\delta}\nabla \left(u_{\eps,\delta}^{\frac{\alpha+2}{2}}\right)\cdot \bm{\phi} \, \d x \d t\\
& = I_1(\eps,\delta) + I_2(\eps,\delta) .
\end{align*}
Limit processes in $I_1(\eps,\delta)$ and $I_2(\eps,\delta)$ are similar to previous ones. However, we need 
to control $  m^{1/2}_{\eps,\delta}(u_{\eps,\delta}) $ and $\nabla (m^{1/2}_{\eps,\delta}(u_{\eps,\delta}))$
instead of $  m_{\eps,\delta}(u_{\eps,\delta}) $ and $\nabla (m_{\eps,\delta}(u_{\eps,\delta}))$. 
Due to \eqref{eq:m'}, \eqref{eq:entro1} and  \eqref{eq:conv_eps_delta}, we get that there exists 
a constant $C$ independent of $\eps,\delta$ such that 
\[
\begin{split}
\int_{0}^T \| \nabla (m^{1/2}_{\eps,\delta}(u_{\eps,\delta}))\|^2_{L^{r}(\Omega)}  \d t &\leqslant
C \int_{0}^T \|  u_{\eps,\delta}^{\frac{n}{2}-1-\frac{\alpha}{2} } \nabla (u_{\eps,\delta}^{\frac{\alpha+2}{2}})\|^2_{L^{r}(\Omega)}  \d t
  \\
&  \leqslant C \int_0^T \| \nabla (u_{\eps,\delta}^{\frac{\alpha+2}{2}}) \|^2_{L^{r_1}(\Omega)} \| u_{\eps,\delta}^{\frac{n}{2}-1-\frac{\alpha}{2}} \|^2_{L^{r_2}(\Omega)} \d t  \leqslant C 
\end{split}
\]
provided $
0 < r_2 (\tfrac{n}{2}-1-\tfrac{\alpha}{2}) \leqslant p_s. 
$
This is true if $2 + \alpha < n < \frac{2(d+2(1-s))}{d -2s} + \alpha$. On the other hand, by (\ref{rrt-3}) we have
$$
\int_{0}^T \|   m^{1/2}_{\eps,\delta}(u_{\eps,\delta})   \|^2_{L^{r}(\Omega)}  \d t \leqslant
\int_{0}^T \|  u_{\eps,\delta}  \|^{n}_{L^{ \frac{nr}{2}}(\Omega)}  \d t
\leqslant    C 
$$
provided
$ 
p = \tfrac{n r}{2} \leqslant \tfrac{p_sq}{q - \alpha - 2}, \  q = n > \alpha + 2. 
$ 
This is true if $2 + \alpha < n < \frac{2(d+2(1-s))}{d -2s} + 2 + \alpha$. We thus obtain the following final lower bound for $n$:
$$
n > 1 + \tfrac{d}{4(s+1) - d} = \tfrac{4(s+1)}{4(s+1) - d}.
$$

\section{Finite Speed of Propagation} \label{sec:fs}

\subsection{Local $\alpha$-entropy estimate}
%{\color{red}This part can be put in the finite speed section?}
Now, we prove a local version of Proposition \ref{prop:alpha_entropy_est-new}.
\begin{lemma}[local $\alpha$-entropy estimate]\label{lm:localestimate-000}
\label{lemma:local_entropy}
Let $s\in (\frac{(d-2)_+}{2},1)$ and $n > 0$. Given $u_{0,\eps,\delta}$ satisfying \eqref{eq:hyp_u0_ex}, we let
$u=u_{\eps,\delta} > 0$ be a solution of \eqref{eq:approx_old} with $\gamma = 0$.
Then, for every  $\psi \in C^{1,2}_{t,x} (\bar{\Omega}_T)$ such that $\nabla \psi \cdot  {\bf{n}}  = 0$
 on $\partial \Omega \times [0, \infty)$, the following holds
\begin{equation}
\label{eq:weighted_entropy1}
\begin{split}
&  \int_{\Omega} \mathscr{G}_{\alpha}^{\eps,\delta}(u) \psi^3 \d x - \iint_{\Omega_T} \mathscr{G}_{\alpha}^{\eps,\delta}(u) (\psi^3)_t \d x \d t 
 + \tfrac{4}{ (\alpha +2)^2} \iint_{\Omega_T}  \abs{\Dusm (u^{\frac{\alpha +2}{2}} \psi) }^2 \psi \, \d x \d t \\
  &=  \int_{\Omega} \mathscr{G}^{\eps,\delta}_\alpha(u_{0,\eps,\delta}) \psi^3(x,0) \d x    
  - \tfrac{4}{ (\alpha +2)^2} \iint_{\Omega_T}\Dusm (u^{\frac{\alpha +2}{2}} \psi) \, u^{\frac{\alpha + 2}{2}} \psi \Dusm \psi \, \d x \d t \\
 &+ \tfrac{2}{(\alpha +1)(\alpha +2)} \iint_{\Omega_T} u^{\alpha +2} \psi^2 (- \Delta)^{s+1} \psi \, \d x \d t  +  
 \tfrac{ \alpha  }{(\alpha +1) ( \alpha +2)^2 } \iint \limits_{\Omega_T} { u^{\alpha +2} (-\Delta)^{s+1} \psi^3  \, \d x \d t   } \\
 &- \tfrac{4}{ (\alpha +2)^2} \iint_{\Omega_T}   \Dusm  (u^{\frac{\alpha + 2}{2}} \psi) \, R_{\frac{1+s}{2}}(u^{\frac{\alpha + 2}{2}} \psi,\psi) \, \d x \d t \\
& + \tfrac{4}{ (\alpha +2)^2} \iint_{\Omega_T}  u^{\frac{\alpha + 2}{2}} \psi^2 R_{s+1} ( u^{\frac{\alpha+2}{2}}, \psi) \, \d x \d t  \\
& -  \tfrac{  \alpha }{(\alpha +1)( \alpha +2)^2 \Gamma(-s-1)}    \iint_{\Omega_T} {  \psi^3 \int_0^{+\infty} { \Bigl( \int_{\Omega} {  K(x,y,\tau) F(u(x), u(y)) \,dy}   \Bigr) \tfrac{d\tau}{\tau^{s+2}}} \d x \d t}  \\
  &+ \tfrac{6 }{\alpha +2} \iint_{\Omega_T}  p u^{\frac{\alpha}{2}} \psi \nabla (u^{\frac{\alpha+2}{2}} \psi) \cdot\nabla \psi \, \tfrac{ m'(u) \mathscr{G}'_{\alpha}(u) }{u^{\alpha}} \, \d x \d t \\
 & + 6 \iint_{\Omega_T}  p u^{\frac{\alpha}{2}} \psi u^{\frac{\alpha+2}{2}} \bigl[  \tfrac{m(u) \mathscr{G}'_{\alpha}(u)}{u^{\alpha+1}} - \tfrac{1}{\alpha+1} - \tfrac{1}{\alpha +2} \tfrac{ m'(u) \mathscr{G}'_{\alpha}(u) }{u^{\alpha}} \bigr]  | \nabla \psi|^2 \, \d x \d t \\
  &+ 3 \iint_{\Omega_T}  p u^{\frac{\alpha}{2}} \psi u^{\frac{\alpha+2}{2}} \bigl[  \tfrac{m(u) \mathscr{G}'_{\alpha}(u)}{u^{\alpha+1}} - \tfrac{1}{\alpha+1}   \bigr]  \psi \Delta \psi \, \d x \d t \\
  & = \int_{\Omega} \mathscr{G}^{\eps,\delta}_\alpha(u_{0,\eps,\delta}) \psi^3(x,0) \d x  + 
   \sum_{i=1}^{9}I_{i},
\end{split}
\end{equation}	
where the function $F$ is defined in (\ref{rrt-fff}).
\end{lemma}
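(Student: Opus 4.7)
\textbf{Proof plan for Lemma~\ref{lm:localestimate-000}.}

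The plan is to test the approximate equation $\partial_t u = \div(m(u)\nabla p)$ against $\mathscr{G}_\alpha'(u)\psi^3$ and integrate over $\Omega\times (0,T)$, then convert the resulting flux identity into the stated form via the nonlocal chain rule of Lemma~\ref{lem-GG} together with commutator identities for $\Dusm$ and $(-\Delta)^{s+1}$. The time derivative contributes, after integration by parts in $t$, the three terms $\int_\Omega \mathscr{G}_\alpha(u(T))\psi^3\dd x - \int_\Omega \mathscr{G}^{\eps,\delta}_\alpha(u_{0,\eps,\delta})\psi^3(x,0)\dd x - \iint_{\Omega_T}\mathscr{G}_\alpha(u)(\psi^3)_t\dd x\dd t$ (using that $u\in L^\infty(0,T;H^{s+1}(\Omega))$ by Proposition~\ref{ex-reg} justifies the time integration).

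For the spatial term, expanding $\nabla(\mathscr{G}_\alpha'(u)\psi^3) = \mathscr{G}_\alpha''(u)\psi^3\nabla u + 3\mathscr{G}_\alpha'(u)\psi^2\nabla\psi$ and using the defining identity $m(u)\mathscr{G}_\alpha''(u) = u^\alpha$, I get two contributions: the ``main'' piece $-\iint_{\Omega_T} u^\alpha\psi^3\nabla u\cdot\nabla p\dd x\dd t$ and a ``cutoff'' piece $-3\iint_{\Omega_T} m(u)\mathscr{G}_\alpha'(u)\psi^2\nabla\psi\cdot\nabla p\dd x\dd t$. In the main piece I write $u^\alpha\nabla u = \tfrac{1}{\alpha+1}\nabla(u^{\alpha+1})$, integrate by parts (the Neumann boundary conditions on $\psi$ and on $\nabla p$ kill boundary terms), and replace $\Delta p = -(-\Delta)^{s+1}u$. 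This produces $-\tfrac{1}{\alpha+1}\iint \psi^3 u^{\alpha+1}(-\Delta)^{s+1}u\,\dd x\dd t$ plus a leftover term $\tfrac{3}{\alpha+1}\iint u^{\alpha+1}\psi^2\nabla\psi\cdot\nabla p\,\dd x\dd t$ that, together with the ``cutoff'' piece, will eventually give $I_7, I_8, I_9$ once rewritten in terms of $\nabla(u^{(\alpha+2)/2}\psi)$, $|\nabla\psi|^2$, and $\psi\Delta\psi$ (the bracketed coefficients $\tfrac{m(u)\mathscr{G}_\alpha'(u)}{u^{\alpha+1}}-\tfrac{1}{\alpha+1}$ and $\tfrac{m'(u)\mathscr{G}_\alpha'(u)}{u^\alpha}$ emerge from grouping the $\alpha$-entropy combinations).

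For the core nonlocal term, I apply Lemma~\ref{lem-GG}(iii) with $\mu = s+1\in(1,2)$ and $\phi(z)=z^{(\alpha+2)/2}$ to get $(-\Delta)^{s+1}(u^{(\alpha+2)/2}) = \tfrac{\alpha+2}{2}u^{\alpha/2}(-\Delta)^{s+1}u - \mathcal{J}_{s+1}[u]$, so that $\psi^3 u^{\alpha+1}(-\Delta)^{s+1}u = \tfrac{2}{\alpha+2}\psi^3 u^{(\alpha+2)/2}(-\Delta)^{s+1}(u^{(\alpha+2)/2}) + \tfrac{2}{\alpha+2}\psi^3 u^{(\alpha+2)/2}\mathcal{J}_{s+1}[u]$. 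The $\mathcal{J}_{s+1}$ contribution, after the algebraic manipulation on $u^{(\alpha+2)/2}\int_{u(x)}^{u(y)}z^{(\alpha-2)/2}(u(y)-z)\dd z$ carried out in the proof of Proposition~\ref{prop:alpha_entropy_est-new}, yields precisely the $F(u(x),u(y))$ kernel term. To turn $\int\psi^3 u^{(\alpha+2)/2}(-\Delta)^{s+1}(u^{(\alpha+2)/2})\dd x$ into the coercive quantity $\int\psi\,|\Dusm(u^{(\alpha+2)/2}\psi)|^2\dd x$, I write $\psi^3 u^{(\alpha+2)/2} = \psi^2\cdot(u^{(\alpha+2)/2}\psi)$, use the integration by parts formula of Proposition~\ref{lem:ip} to split derivatives evenly, and apply the commutator identities $(-\Delta)^{s+1}(u^{(\alpha+2)/2}\psi) = \psi(-\Delta)^{s+1}u^{(\alpha+2)/2} + u^{(\alpha+2)/2}(-\Delta)^{s+1}\psi + R_{s+1}(u^{(\alpha+2)/2},\psi)$ and the analogous $\Dusm$-commutator. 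This produces the coercive term, the three ``$\psi$-Laplacian'' terms ($\Dusm\psi$, $(-\Delta)^{s+1}\psi$, $(-\Delta)^{s+1}\psi^3$), and the two remainder terms involving $R_{(s+1)/2}$ and $R_{s+1}$.

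The main obstacle I expect is the bookkeeping: several algebraic identities must be checked to ensure that (i) the commutator terms from the two successive applications of $\Dusm$ combine into exactly the stated $R_{(s+1)/2}$ and $R_{s+1}$ remainders, (ii) the mix of $(-\Delta)^{s+1}\psi$ and $(-\Delta)^{s+1}\psi^3$ arises with the correct coefficients $\tfrac{2}{(\alpha+1)(\alpha+2)}$ and $\tfrac{\alpha}{(\alpha+1)(\alpha+2)^2}$ (the latter coming from reorganizing a term $\int u^{\alpha+2}\psi^2\Dusm\psi\cdot(\text{something})$ together with the chain-rule leftover), and (iii) the regularity required to justify all integrations by parts and the finiteness of the kernel term $F$ is guaranteed by Proposition~\ref{ex-reg} and the estimate \eqref{ant-sym} already established in Proposition~\ref{prop:alpha_entropy_est-new}. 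Once these identifications are performed termwise, equating the time and space contributions and collecting the nine remainder integrals $I_1,\dots,I_9$ in the stated order completes the proof.
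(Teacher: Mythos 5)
Your plan follows the paper's proof closely in outline (test against $\mathscr{G}'_\alpha(u)\psi^3$, integrate by parts, apply the nonlocal chain rule at order $s+1$, use fractional Leibniz/commutator identities at both orders $s+1$ and $\tfrac{s+1}{2}$, collect the cutoff pieces into $I_7,\dots,I_9$), and the parts you spell out — the time-derivative integration by parts, the split into main and cutoff pieces, and the identification of the $B$-piece as $-\iint(m(u)\mathscr{G}'_\alpha(u)-\tfrac{1}{\alpha+1}u^{\alpha+1})\nabla\psi^3\cdot\nabla p$ — are correct.

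However, there is a genuine gap in your treatment of the $\mathcal{J}_{s+1}[u]$ remainder. You write that the $\mathcal{J}_{s+1}$ contribution, after the algebraic manipulation from Proposition~\ref{prop:alpha_entropy_est-new}, "yields precisely the $F(u(x),u(y))$ kernel term." But by Lemma~\ref{lem-GG}(iii),
$$
\mathcal{J}_{s+1}[u] = \mathcal{I}_{s+1}[u] - \phi''(u(x))\,|\nabla u(x)|^2\,\tfrac{1}{\Gamma(-s)}\int_0^{+\infty}\Bigl(\int_\Omega K(x,y,t)\,dy\Bigr)\tfrac{dt}{t^{s+1}},
$$
and only the $\mathcal{I}_{s+1}$ piece produces the $F$-kernel term. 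Your plan does not address the $\phi''(u)|\nabla u|^2$ piece at all. Since $\phi(z)=z^{\frac{\alpha+2}{2}}$ gives $u^{\frac{\alpha+2}{2}}\phi''(u)|\nabla u|^2=\tfrac{\alpha}{\alpha+2}|\nabla u^{\frac{\alpha+2}{2}}|^2$, this piece contributes a weighted integral $\int\psi^3|\nabla u^{\frac{\alpha+2}{2}}|^2(\cdots)\,dx$ that cannot be absorbed by routine commutators or Proposition~\ref{lem:ip}: it must be re-expressed via the localized Parseval-type identity \eqref{ps-new-local}, which is the essential extra ingredient the paper invokes. That identity converts the weighted $|\nabla v|^2$ integral into the fractional seminorm plus the $(-\Delta)^{s+1}\psi^3$ term, the $\Dusm\psi$ cross term, and additional $R_{(s+1)/2}$, $R_{s+1}$ remainders. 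Without this step your coercive coefficient would come out as $\tfrac{2}{(\alpha+1)(\alpha+2)}$ rather than the stated $\tfrac{4}{(\alpha+2)^2}$ (these agree only for $\alpha=0$), and the $(-\Delta)^{s+1}\psi^3$ term with coefficient $\tfrac{\alpha}{(\alpha+1)(\alpha+2)^2}$ would not appear at all. You should make the decomposition of $\mathcal{J}_{s+1}$ explicit and incorporate \eqref{ps-new-local} (or an equivalent local version of \eqref{ps-new}) before claiming the stated identity.
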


\begin{proof}[Proof of Lemma~\ref{lm:localestimate-000}]

%We consider a smooth $\psi$ with $\nabla \psi\cdot n =0$ on $\partial\Omega\times [0,+\infty)$.
 Multiplying (\ref{eq:ft}) by $\mathscr{G}'_{\alpha}(u) \psi^3$, we have
\begin{equation}
\label{eq:conto1}
\begin{split}
& \frac{\d}{\d t}\int_\Omega \mathscr{G}_{\alpha}(u) \psi^3 \d x  -   \int_\Omega \mathscr{G}_{\alpha}(u) (\psi^3)_t \d x  = \int \psi^3 \mathscr{G}_{\alpha}'(u) \div(m(u)\nabla p) \d x \\
& =   -\int_\Omega \psi^3 m(u) \mathscr{G}''_{\alpha}(u)\nabla u\cdot \nabla p\d x - \int_\Omega m(u) \mathscr{G}'_{\alpha}(u) \nabla \psi^3 \cdot \nabla p\d x \\
 & =  -\int_\Omega \psi^3 u^{\alpha} \nabla u\cdot \nabla p\d x  -    \int_\Omega m(u) \mathscr{G}'_{\alpha}(u) \nabla \psi^3 \cdot \nabla p \d x \\
& =  \tfrac{1}{\alpha + 1}\int_\Omega u^{\alpha +1} \psi^3 \Delta p\d x  + \tfrac{1}{\alpha+1}\int_\Omega \nabla (\psi^3)\cdot \nabla p u^{\alpha+1}\d x \\
&-    \int_\Omega m(u) \mathscr{G}'_{\alpha}(u) \nabla \psi^3 \cdot \nabla p \d x \\
%-\tfrac{n}{(\alpha +1)(\alpha - n + 1)}  \int u^{\alpha +1} \nabla \psi^3 \cdot \nabla p \d x \\
& =  \tfrac{1}{\alpha + 1}\int_\Omega u^{\alpha +1} \psi^3 \Delta p\d x -  \int_\Omega ( m(u) \mathscr{G}'_{\alpha}(u) -
\tfrac{1}{\alpha+1} u^{\alpha +1} ) \nabla \psi^3 \cdot \nabla p \d x \\
&=  \tfrac{1}{\alpha + 1} A + B  .
\end{split}
\end{equation}

Multiplying (\ref{rrt-ktr}) by $\psi$ and using the Fractional Leibniz formula as follows
$$
(-\Delta)^{s+1}\left(u^{ \frac{\alpha+2}{2}}\psi\right) =  
  \psi (-\Delta)^{s+1} u^{\frac{\alpha+2}{2}} + u^{ \frac{\alpha+2}{2}} (-\Delta )^{s+1} \psi  + R_{s+1}\left(u^{\frac{\alpha+2}{2}},\psi\right),
$$
we deduce that
$$
(- \Delta)^{s+1} ( u^{\frac{\alpha +2}{2}} \psi) = \tfrac{\alpha +2}{2} u^{\frac{\alpha}{2}} \psi (- \Delta)^{s+1} u -
\psi \mathcal{J}_{s+1} [u]  + u^{ \frac{\alpha+2}{2}} (-\Delta )^{s+1} \psi  + R_{s+1}\left(u^{\frac{\alpha+2}{2}},\psi\right).
$$
Multiplying this equality by $u^{\frac{\alpha +2}{2}} \psi^2$, using
$$
(-\Delta)^{\frac{s+1}{2}}\left(u^{ \frac{\alpha+2}{2}}\psi^2\right) =  
  \psi (-\Delta)^{\frac{s+1}{2}} ( u^{\frac{\alpha+2}{2}} \psi) + u^{ \frac{\alpha+2}{2}} \psi (-\Delta )^{\frac{s+1}{2}} \psi  + R_{\frac{s+1}{2}}\left(u^{\frac{\alpha+2}{2}}\psi,\psi\right),
$$
and after then integrating over $\Omega$, we have
\[
\begin{split}
\int_\Omega  \bigl| (- \Delta)^{\frac{s+1}{2}} (u^{\frac{\alpha +2}{2}} \psi) \bigr|^2 \psi \,  \d x &=  
\tfrac{\alpha +2}{2} \int_\Omega u^{\alpha +1} \psi^3 (- \Delta)^{s+1} u \, \d x -
\int_\Omega u^{\frac{\alpha + 2}{2}} \psi^3 \mathcal{J}_{s+1} [u] \, \d x  \\
&+ \int_\Omega u^{\alpha +2} \psi^2 (- \Delta)^{s+1} \psi \, \d x  - \int_\Omega   (- \Delta)^{\frac{s+1}{2}} (u^{\frac{\alpha +2}{2}} \psi) u^{\frac{\alpha + 2}{2}} \psi (- \Delta)^{\frac{s+1}{2}} \psi  \,  \d x  \\
& +\int_\Omega u^{\frac{\alpha + 2}{2}} \psi^2 R_{s+1}\left(u^{\frac{\alpha+2}{2}},\psi\right) \, \d x - 
\int_\Omega   (- \Delta)^{\frac{s+1}{2}} (u^{\frac{\alpha +2}{2}} \psi) R_{\frac{s+1}{2}}\left(u^{\frac{\alpha+2}{2}}\psi,\psi\right)  \,  \d x . 
\end{split}
\]
% 
%
%Now we use the Fractional Leibniz formula as follows
%\begin{equation}
%\label{eq:leib}
%\begin{split}
%&(-\Delta)^{s+1}\left(u^{(\alpha+2)/2}\psi\right) = (-\Delta)^{s+1}\left(u\,u^{\alpha/2}\psi\right) \\
%&= u^{\alpha/2} \psi (-\Delta)^{s+1} u + u (-\Delta )^{s+1}\left(u^{\alpha/2}\psi\right) + R_{s+1}\left(u, u^{\alpha/2}\psi\right).
%\end{split}
%\end{equation}
%
Therefore,
\begin{equation} \label{eq:A}
\begin{split}
A &=   - \int_\Omega u^{\alpha +1} \psi^3  ( -\Delta)^{s+1} u \d x  \\
  & = - \tfrac{2}{\alpha +2} \int_\Omega  \bigl| (- \Delta)^{\frac{s+1}{2}} (u^{\frac{\alpha +2}{2}} \psi) \bigr|^2 \psi \,  \d x - \tfrac{2}{\alpha +2} \int_\Omega u^{\frac{\alpha + 2}{2}} \psi^3 \mathcal{J}_{s+1} [u] \, \d x \\
& + \tfrac{2}{\alpha +2} \int_\Omega u^{\alpha +2} \psi^2 (- \Delta)^{s+1} \psi \, \d x - \tfrac{2}{\alpha +2} \int_\Omega   (- \Delta)^{\frac{s+1}{2}} (u^{\frac{\alpha +2}{2}} \psi) u^{\frac{\alpha + 2}{2}} \psi (- \Delta)^{\frac{s+1}{2}} \psi  \,  \d x \\
 & + \tfrac{2}{\alpha +2} \int_\Omega u^{\frac{\alpha + 2}{2}} \psi^2 R_{s+1}\left(u^{\frac{\alpha+2}{2}},\psi\right) \, \d x- \tfrac{2}{\alpha +2} \int_\Omega   (- \Delta)^{\frac{s+1}{2}} (u^{\frac{\alpha +2}{2}} \psi) R_{\frac{s+1}{2}}\left(u^{\frac{\alpha+2}{2}}\psi,\psi\right)  \,  \d x .
\end{split}
\end{equation}
%
%As
%\begin{multline*}
%\int \limits_{\Omega}  (-\Delta)^{s+1} ( u^{\alpha +2} \psi^3 ) \d x =
%\int \limits_{\Omega}  u^{\frac{\alpha }{2}} \psi  (-\Delta)^{s+1} ( u^{\frac{\alpha +4}{2}} \psi^2 ) \d x + \\
%\int \limits_{\Omega}  u^{\frac{\alpha +4}{2}} \psi^2  (-\Delta)^{s+1} ( u^{\frac{\alpha }{2}} \psi  ) \d x +
%\int \limits_{\Omega}  R_{s+1} (u^{\frac{\alpha +4}{2}} \psi^2 , u^{\frac{\alpha }{2}} \psi ) \d x = \\
%2 \int \limits_{\Omega}   u^{\frac{\alpha +4}{2}} \psi^2  (-\Delta)^{s+1} ( u^{\frac{\alpha }{2}} \psi  )
%\d x +
%\int \limits_{\Omega}  R_{s+1} (u^{\frac{\alpha +4}{2}} \psi^2 , u^{\frac{\alpha }{2}} \psi ) \d x
%\end{multline*}
%then, using  the following equality
%$$
%\int \limits_{\Omega}  (-\Delta)^{\beta} v \d x  = 0 \text{ for any } v \in H_N^{2\beta}(\Omega),
%$$
%we have
%$$
%\int \limits_{\Omega}   u^{\frac{\alpha +4}{2}} \psi^2  (-\Delta)^{s+1} ( u^{\frac{\alpha }{2}} \psi  )  =
% -\tfrac{1}{2} \int \limits_{\Omega}  R_{s+1} (u^{\frac{\alpha +4}{2}} \psi^2 , u^{\frac{\alpha }{2}} \psi ) \d x.
%$$
%
Using a local version of (\ref{ps-new}), i.\,e. 
 \begin{equation}
 \begin{split}
 \label{ps-new-local}
&\int_{\Omega} {  |(-\Delta)^{\frac{\mu}{2}} (v \psi) |^2 \psi \, dx   }  \\
 &= \tfrac{1}{2} \int_{\Omega} { v^2 (-\Delta)^{ \mu } \psi^3(x)  \, dx   }
 -   \tfrac{1}{ \Gamma(1-\mu)}
\int_{\Omega}{ \psi^3(x) |\nabla v(x)|^2 \int_0^{+\infty} { \Bigl( \int_{\Omega} {  K(x,y,t) dy}   \Bigr) \tfrac{dt}{t^{\mu}}} dx}  \\
&-\tfrac{1}{2 \Gamma(-\mu)} \int \limits_{\Omega}{\psi^3(x) \int \limits_0^{+\infty} { \Bigl(
\int_{\Omega} { K(x,y,t) (v(y) - v(x))^2 dy}   \Bigr) \tfrac{dt}{t^{1+\mu}}} dx}  \\
&-\int_{\Omega} { v \psi (-\Delta)^{\frac{\mu}{2}} \psi  \, (-\Delta)^{\frac{\mu}{2}} (v \psi) \, dx   } - 
\int_{\Omega} { (-\Delta)^{\frac{\mu}{2}} (v \psi) R_{\frac{\mu}{2}} (v\psi,\psi) \, dx   }  \\
&+\int_{\Omega} { v \psi^2 R_{ \mu } (v ,\psi) \, dx   } ,
\end{split}
 \end{equation}
with $\mu = s+1$ and $v = u^{\frac{\alpha + 2}{2}}$,  we have
\[
\begin{split}
\int_\Omega u^{\frac{\alpha + 2}{2}}\psi^3 \mathcal{J}_{s+1} [u] \, \d x &=
- \tfrac{ \alpha  }{  \alpha +2 } \tfrac{1}{ \Gamma( -s)}  \int_{\Omega} {\psi^3(x) |\nabla u^{\frac{\alpha +2}{2}}(x)|^2  \int_0^{+\infty} { \Bigl( \int_{\Omega} {  K(x,y,t) dy}   \Bigr) \tfrac{dt}{t^{s+1}}} dx}  \\
&+\int_\Omega u^{\frac{\alpha + 2}{2}}\psi^3 \mathcal{I}_{s+1} [u] \, \d x =
\tfrac{\alpha}{ \alpha +2 } \int_\Omega  \bigl| (- \Delta)^{\frac{s+1}{2}} (u^{\frac{\alpha +2}{2}} \psi) \bigr|^2 \psi \,  \d x  \\
&+\tfrac{\alpha}{2( \alpha +2)\Gamma(-s-1)}  \int_{\Omega} { \psi^3  \int_0^{+\infty} { \Bigl( \int_{\Omega} {  K(x,y,t) (u^{\frac{\alpha + 2}{2}}(y) - u^{\frac{\alpha + 2}{2}}(x))^2 \,dy}   \Bigr) \tfrac{dt}{t^{s+2}}} dx}  \\
&-\tfrac{\alpha(\alpha +2)}{4\Gamma(-s-1)} \int_\Omega { u^{\frac{\alpha + 2}{2}}(x)\psi^3 
\int_0^{+\infty} { \Bigl( \int_{\Omega} {  K(x,y,t) \int_{u(x)}^{u(y)} {  z^{\frac{\alpha -2}{2}} (u(y) - z) dz} \,dy}   \Bigr) \tfrac{dt}{t^{s+2}}} \, \d x }  \\
&-  \tfrac{ \alpha  }{ 2( \alpha +2) } \int_{\Omega} { u^{\alpha +2} (-\Delta)^{s+1} \psi^3   \, dx   } + 
\tfrac{ \alpha  }{  \alpha +2 } \int_{\Omega} { u^{\frac{\alpha + 2}{2}} \psi (-\Delta)^{\frac{s+1}{2}} \psi  \, (-\Delta)^{\frac{s+1}{2}} (u^{\frac{\alpha + 2}{2}} \psi) \, dx   }  \\
&+\tfrac{ \alpha  }{  \alpha +2 } \int_{\Omega} { (-\Delta)^{\frac{s+1}{2}} (u^{\frac{\alpha + 2}{2}} \psi) R_{\frac{s+1}{2}} (u^{\frac{\alpha + 2}{2}}\psi,\psi) \, dx   } -  
\tfrac{ \alpha  }{  \alpha +2 } \int_{\Omega} { u^{\frac{\alpha + 2}{2}} \psi^2 R_{ s+1 } (u^{\frac{\alpha + 2}{2}} ,\psi) \, dx   }.
\end{split}
\]
Using this equality in (\ref{eq:A}), we deduce that
\begin{equation} \label{eq:A-2}
\begin{split}
A &=  - \tfrac{4(\alpha+1)}{(\alpha +2)^2} \int_\Omega  \bigl| (- \Delta)^{\frac{s+1}{2}} (u^{\frac{\alpha +2}{2}} \psi) \bigr|^2 \psi \,  \d x   \\
 & + \tfrac{2}{\alpha +2} \int_\Omega u^{\alpha +2} \psi^2 (- \Delta)^{s+1} \psi \, \d x +  
 \tfrac{ \alpha  }{ ( \alpha +2)^2 } \int_{\Omega} { u^{\alpha +2} (-\Delta)^{s+1} \psi^3  \, dx   } \\
 &- \tfrac{4(\alpha+1)}{(\alpha +2)^2} \int_\Omega   (- \Delta)^{\frac{s+1}{2}} (u^{\frac{\alpha +2}{2}} \psi) u^{\frac{\alpha + 2}{2}} \psi (- \Delta)^{\frac{s+1}{2}} \psi  \,  \d x \\
 & + \tfrac{4(\alpha+1)}{(\alpha +2)^2}\int_\Omega u^{\frac{\alpha + 2}{2}} \psi^2 R_{s+1}\left(u^{\frac{\alpha+2}{2}},\psi\right) \, \d x - \tfrac{4(\alpha+1)}{(\alpha +2)^2} \int_\Omega   (- \Delta)^{\frac{s+1}{2}} (u^{\frac{\alpha +2}{2}} \psi) R_{\frac{s+1}{2}}\left(u^{\frac{\alpha+2}{2}}\psi,\psi\right)  \,  \d x \\
 & - \tfrac{\alpha}{ ( \alpha +2)^2 \Gamma(-s-1)}  \int_{\Omega} { \psi^3  \int_0^{+\infty} { \Bigl( \int_{\Omega} {  K(x,y,t) (u^{\frac{\alpha + 2}{2}}(y) - u^{\frac{\alpha + 2}{2}}(x))^2 \,dy}   \Bigr) \tfrac{dt}{t^{s+2}}} dx} \\
 & + \tfrac{\alpha }{2 \Gamma(-s-1)} \int_\Omega { u^{\frac{\alpha + 2}{2}}(x)\psi^3 
\int_0^{+\infty} { \Bigl( \int_{\Omega} {  K(x,y,t) \int_{u(x)}^{u(y)} {  z^{\frac{\alpha -2}{2}} (u(y) - z) dz} \,dy}   \Bigr) \tfrac{dt}{t^{s+2}}} \, \d x }.
\end{split}
\end{equation}

On the other hand,
\begin{equation}\label{eq:A-0}
\begin{split}
  B = & \int_\Omega p\div \bigl(( m(u) \mathscr{G}'_{\alpha}(u) -
\tfrac{1}{\alpha+1} u^{\alpha +1}) \nabla \psi^3\bigr) \d x\\
    & = \int_\Omega p \, m'(u) \mathscr{G}'_{\alpha}(u) \nabla u \cdot \nabla \psi^3 \d x + \int_\Omega p \, ( m(u) \mathscr{G}'_{\alpha}(u) - \tfrac{1}{\alpha+1} u^{\alpha +1}) \Delta \psi^3 \d x \\
     &  =    \tfrac{6 }{\alpha +2} \int_\Omega p u^{\frac{\alpha}{2}} \psi \nabla (u^{\frac{\alpha+2}{2}} \psi) \cdot\nabla \psi \, \tfrac{ m'(u) \mathscr{G}'_{\alpha}(u) }{u^{\alpha}} \d x  -  \tfrac{6 }{\alpha +2} \int_\Omega p(u) u^{\frac{\alpha}{2}} \psi u^{\frac{\alpha+2}{2}} \tfrac{ m'(u) \mathscr{G}'_{\alpha}(u) }{u^{\alpha}}  | \nabla \psi|^2  \d x \\
     & + 6 \int _\Omega p \, ( m(u) \mathscr{G}'_{\alpha}(u) - \tfrac{1}{\alpha+1} u^{\alpha +1}) \psi |\nabla \psi|^2 \d x
       +      3 \int_\Omega p \, ( m(u) \mathscr{G}'_{\alpha}(u) - \tfrac{1}{\alpha+1} u^{\alpha +1}) \psi^2 \Delta \psi  \d x  \\
      &  =    \tfrac{6 }{\alpha +2} \int_\Omega p u^{\frac{\alpha}{2}} \psi \nabla (u^{\frac{\alpha+2}{2}} \psi) \cdot\nabla \psi \, \tfrac{ m'(u) \mathscr{G}'_{\alpha}(u) }{u^{\alpha}} \d x \\
      & + 6 \int_\Omega p u^{\frac{\alpha}{2}} \psi u^{\frac{\alpha+2}{2}} \bigl[  \tfrac{m(u) \mathscr{G}'_{\alpha}(u)}{u^{\alpha+1}} - \tfrac{1}{\alpha+1} 
      - \tfrac{1}{\alpha +2} \tfrac{ m'(u) \mathscr{G}'_{\alpha}(u) }{u^{\alpha}} \bigr]  | \nabla \psi|^2  \d x \\
        & + 3 \int_\Omega p u^{\frac{\alpha}{2}} \psi u^{\frac{\alpha+2}{2}} \bigl[  \tfrac{m(u) \mathscr{G}'_{\alpha}(u)}{u^{\alpha+1}} - \tfrac{1}{\alpha+1}   \bigr]  \psi \Delta \psi  \d x .
        \end{split}
\end{equation}
Time integration leads to the relation (\ref{eq:weighted_entropy1}).
%{\color{red}
% Concerning
%\[
%\mathscr{R}= \int_{\Omega}\frac{c_{n,\alpha}u^{\alpha+1}-Q_\alpha(u)m(u)}{m^{1/2}(u)}\nabla \psi^3\cdot(m^{1/2}(u)\nabla p)\d x.
%\]
%We know that $m^{1/2}(u)\nabla p$ is bounded uniformly w.r.t. $\eps,\delta,\gamma$ in $L^2(L^{q'})$ with $q'<2$.
%Is it true that
%\[
%\frac{c_{n,\alpha}u^{\alpha+1}-Q_\alpha(u)m(u)}{m^{1/2}(u)} \xrightarrow{\eps,\delta,\gamma\to 0} 0 \hbox{ in } L^2(L^q)??
%\]
%}
\end{proof}

%\section{Existence of weak solutions for a large class of parameters}
%\label{sec:ex}
%In this section we prove Theorem \ref{th:ex}.

For an arbitrary $S > 0$  we consider the sets
\begin{equation}\label{e-7}
\Omega (S) := \Omega \setminus \{ x \in \bar{\Omega} : |x | \leqslant  S \}, \qquad \Omega_T(S) := (0,T)  \times \Omega(S).
\end{equation}
Given $\delta > 0$, we consider the cut-off functions $\psi_{S,\delta }
\in C^{\infty} (\mathbb{R})$ such that $0 \leqslant \psi_{S,\delta} \leqslant 1$, and 
\begin{equation}\label{e-9}
\psi_{S,\delta } (x) =
\begin{cases}
1 & \text{ if } x \in \Omega(S +  \delta),\\
0 & \text{ if } x \in  \Omega \setminus \Omega(S),
\end{cases}
\end{equation}
$$
|  \nabla \psi_{S,\delta }| \leqslant \tfrac{C} {  \delta },\quad
| (-\Delta)^{ s +1 } \psi_{S,\delta } | \leqslant \tfrac{C} {  \delta^{2(s+1)} }
$$
for all $ x \in \mathbb{R}$, $s \in [0,1)$.

Finally, we introduce the entropy concentrated on 
annuli
\begin{equation}
\label{eq:ene_annuli}
A_T(S) := \iint_ {\Omega_T(S)}{  u^{\alpha +2} \dd x \dd t}.	
\end{equation}
As we will see in a while, $A_T(S)$ will play a key role in the proof of the finite speed of propagation.
\begin{lemma}[Final local $\alpha$-entropy inequality]
\label{lm:add-000}
Let $s\in (\frac{(d-2)_+}{2},1)$ and $n\in (\tfrac{2(s+1)}{ 4(s+1)-d },2)$. Suppose the initial data $u_0$ satisfies \eqref{eq:hyp_u0_ex} and
$$
\int \limits_{\Omega} \mathscr{G}_\alpha(u_0)  \d x < \infty.
$$
Let  $u$ be a weak solution in the class $\mathscr{A}(u_0)$.
Then, for any $\delta > 0$, we have 
\begin{equation}
 \label{main-alp}
\begin{split}
 \int_{\Omega(S+  \delta)}{\mathscr{G}_{\alpha}(u) \,dx}  +
C \iint_{\Omega_T(S +\delta) }{ |(-\Delta)^{\frac{s+1}{2}}(u^{\frac{\alpha +2}{2}} \psi_{S,\delta } ) |^2   \,\d x  \d t}    \\
\leqslant  \int_{\Omega(S)}{\mathscr{G}_{\alpha}(u_0) \,dx} +  \tfrac{C }{\delta^{2(s+1)}} A_T(S)  + \tfrac{C }{\delta^{2(s+1)}}
A^{1 - \varpi}_T(0) A^{\varpi}_T(S)  ,
\end{split}
\end{equation}
where $(n-1)_+ < \alpha \leqslant 1$, $\varpi := \frac{s}{2s+1}$.
\end{lemma}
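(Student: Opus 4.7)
The plan is to apply the local identity \eqref{eq:weighted_entropy1} (from Lemma~\ref{lm:localestimate-000}) with the cut-off $\psi=\psi_{S,\delta}$ and approximants $u_{\eps,\delta}>0$, then estimate each of the nine remainder terms $I_1,\dots,I_9$, and finally pass to the limit $\gamma,\eps,\delta\to 0$ using the compactness already established in the proof of Theorem~\ref{th:ex}. Since $\psi_{S,\delta}\equiv 1$ on $\Omega(S+\delta)$, the two left-hand-side quantities majorize $\int_{\Omega(S+\delta)}\mathscr{G}_\alpha(u)\,dx$ and $C\iint_{\Omega_T(S+\delta)}|(-\Delta)^{(s+1)/2}(u^{(\alpha+2)/2}\psi_{S,\delta})|^2\,dx\,dt$. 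On the other hand, $\supp\psi_{S,\delta}\subseteq\Omega(S)$, so the initial contribution $\int_\Omega\mathscr{G}^{\eps,\delta}_\alpha(u_{0,\eps,\delta})\psi^3(x,0)\,dx$ is bounded by $\int_{\Omega(S)}\mathscr{G}_\alpha(u_0)\,dx$ (up to vanishing errors) thanks to \eqref{H1p}--\eqref{Hsc}.

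For the \emph{geometric} terms $I_1$--$I_5$, each factor stemming from $\psi$ satisfies $|(-\Delta)^{(s+1)/2}\psi|\leq C\delta^{-(s+1)}$ and $|(-\Delta)^{s+1}\psi|\leq C\delta^{-2(s+1)}$, while the commutator remainders $R_{(s+1)/2}$, $R_{s+1}$ obey analogous Leibniz-type bounds. A Cauchy--Schwarz pairing against the top-order factor $(-\Delta)^{(s+1)/2}(u^{(\alpha+2)/2}\psi)$ followed by Young's inequality absorbs a small multiple of the LHS and leaves a residual controlled by $C\delta^{-2(s+1)}\iint_{\supp\psi}u^{\alpha+2}\,dx\,dt\leq C\delta^{-2(s+1)}A_T(S)$. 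The $F$-term $I_6$ is handled through the pointwise bound $|F(r,w)|\leq C|r-w|^{\alpha+2}$ from \eqref{est-ant} together with the kernel estimate in \eqref{control_kernel}, which converts it into a fractional Gagliardo-type integral; splitting the $|x-y|$-domain at a threshold of order $\delta$, exactly as around \eqref{ant-sym}, yields the same $C\delta^{-2(s+1)}A_T(S)$ bound.

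The main difficulty lies in the \emph{pressure triple} $I_7,I_8,I_9$, in which the nonlocal $p=(-\Delta)^s u$ appears multiplied by $\nabla\psi$ or $\Delta\psi$. The bracketed scalar coefficients in \eqref{eq:weighted_entropy1} are uniformly bounded by \eqref{eq:bound_entropy_1}--\eqref{eq:bound_energy21}, thanks to the standing assumption $\alpha>(n-1)_+$ that makes $A=0$ in \eqref{eq:entropy} and removes the offending negative power of $u$. Cauchy--Schwarz then reduces the problem (after absorbing the ensuing $\iint|\nabla(u^{(\alpha+2)/2}\psi)|^2$ into the LHS via $\dot H^{s+1}\hookrightarrow\dot H^1$, which is licit because $s<1$) to controlling
\begin{equation*}
\delta^{-2(s+1)}\iint_{\Omega_T(S)} p^2\,u^{\alpha+2}\,dx\,dt.
\end{equation*}

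This is the heart of the matter: $p$ is not controlled locally, so any estimate must invoke the global pressure bound. The plan is to interpolate $(-\Delta)^s u$ between the $H^{-s}$-type information coming from the basic energy estimate \eqref{eq:ineq} (itself majorized by $A_T(0)$ after invoking the Sobolev embedding on $u\in L^\infty_t H^s_x$) and the $H^{s+1}$-information of $u^{(\alpha+2)/2}\psi$ coming from the LHS of \eqref{main-alp}. Interpolating $L^2$ between $H^{-s}$ and $H^{s+1}$ produces the fractional weight $\theta=s/(2s+1)$ on the high-regularity side, which, after a H\"older split combined with the restriction of $u$ to $\supp\psi\subseteq\Omega(S)$, yields precisely the mixed factor $A_T^{1-\varpi}(0)A_T^{\varpi}(S)$ with $\varpi=s/(2s+1)$. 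The careful bookkeeping of the H\"older exponents so as to simultaneously absorb the top-order term and keep the correct dependence on $A_T(0)$ and $A_T(S)$ is the delicate point, and it is where the restrictions $\alpha>(n-1)_+$ and $n<2$ become essential.
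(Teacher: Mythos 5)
Your overall scaffold is correct (apply Lemma~\ref{lm:localestimate-000} with $\psi=\psi_{S,\delta}$ on the approximants, estimate $I_1,\dots,I_9$, pass to the limit), and you correctly flag the pressure triple $I_7,I_8,I_9$ as the delicate step and correctly predict the value $\varpi=\tfrac{s}{2s+1}$. However, your treatment of that step has a genuine gap, in two places.

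First, the claim that you can reduce $I_7$ to controlling $\delta^{-2(s+1)}\iint_{\Omega_T(S)}p^2u^{\alpha+2}$ by ``absorbing $\iint|\nabla(u^{(\alpha+2)/2}\psi)|^2$ via $\dot H^{s+1}\hookrightarrow\dot H^1$'' only produces the weight $\delta^{-2}$ (one factor from $|\nabla\psi|$, one from Young), not $\delta^{-2(s+1)}$. The paper instead uses the \emph{interpolation}
$\|\nabla v\|_{L^2}\le\|\Dusm v\|_{L^2}^{1/(s+1)}\|v\|_{L^2}^{s/(s+1)}$
(cf.~\eqref{dong-01} with $\beta=\tfrac12$); it is precisely the sub-quadratic power $\tfrac{2}{s+1}<2$ on $\Dusm$ that, after Young's inequality, generates the extra $\delta^{-2s}$ and ultimately the $\varpi=\tfrac{s}{2s+1}$ exponent. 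The plain embedding forgets this crucial bookkeeping.

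Second, and more fundamentally, the mechanism you propose for bounding $\iint p^2u^{\alpha+2}$ --- ``interpolate $(-\Delta)^su$ between the $H^{-s}$-information of \eqref{eq:ineq} and the $H^{s+1}$-information of $u^{(\alpha+2)/2}\psi$'' --- is not well-posed: interpolation of Sobolev norms applies to a single fixed function, whereas here you are mixing seminorms of $u$ with seminorms of the nonlinear object $u^{(\alpha+2)/2}\psi$. The computation $\|f\|_{L^2}\le\|f\|_{H^{-s}}^{1-\varpi}\|f\|_{H^{s+1}}^{\varpi}$ applied to $f=p$ would produce $\|u\|_{H^s}^{1-\varpi}\|u\|_{H^{3s+1}}^{\varpi}$, which is not what appears on the right-hand side of~\eqref{main-alp}. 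What the paper actually uses is a \emph{weighted} interpolation inequality (recalled at the start of its proof, and of the same family as \cite[Lemma~A.6]{DNLST24}):
\[
\|\psi\,u^{\alpha/2}(-\Delta)^\beta u\|_{L^2(\Omega)}\le
\|\Dusm(u^{(\alpha+2)/2}\psi)\|^{\theta}_{L^2(\Omega)}\|u^{(\alpha+2)/2}\psi\|_{L^2(\Omega)}^{1-\theta}
+\tfrac{C}{\delta^{2\beta}}\|u^{(\alpha+2)/2}\|_{L^2(\Omega)},\qquad\theta=\tfrac{2\beta}{1+s},
\]
applied with $\beta=s$ so that $(-\Delta)^\beta u=p$. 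This directly replaces the uncontrolled $p\,u^{\alpha/2}\psi$ by a localized high-regularity factor (absorbable into the LHS) plus a global error $\delta^{-2s}\|u^{(\alpha+2)/2}\|_{L^2(\Omega)}$ that survives into $A_T(0)$. Combining that with the $\nabla$-interpolation above, then Young and H\"older in time, yields exactly $\delta^{-2(s+1)}A_T(0)^{(s+1)/(2s+1)}A_T(S)^{s/(2s+1)}$. Without this specific weighted lemma --- which is the genuinely nonstandard ingredient here --- your reduction to $\iint p^2u^{\alpha+2}$ has no way to be closed, since $p^2$ is global and there is no room to peel off a localized factor of the right homogeneity.
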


\begin{proof}[Proof of Lemma~\ref{lm:add-000}]
First, we recall the following interpolation inequality (whose proof is similar to that of \cite[Lemma A.6]{DNLST24}):
\[
\begin{split}
\| \psi u^{\frac{\alpha}{2}} (- \Delta)^{\beta} u \|_{L^2(\Omega)}
\leqslant \| \Dusm (u^{\frac{\alpha +2}{2}} \psi)  \|^{\theta}_{L^2(\Omega) }
\|  u^{\frac{\alpha +2}{2}} \psi   \|^{1 - \theta}_{L^2(\Omega) }  + \tfrac{C}{\delta^{2\beta}}\|u^{\frac{\alpha +2}{2}} \|_{L^2(\Omega) } ,
\end{split}
\]
where $\beta \in (0, \frac{1+s}{2})$, $\alpha \geqslant 0$, and $ \theta = \frac{2\beta}{1+s}$.
Next, we recall that the commutator $R_\beta$ satisfies the estimate
$$
\| R_{\beta}(f,g) \|_{L^2(\Omega)} \leqslant \| f \|_{L^2(\Omega)} \| (-\Delta)^{\beta} g \|_{L^{\infty}(\Omega)} 
$$
for  $\beta \in (0,2)$. The proof of this estimate follows from \cite[Theorem 1.2]{Dong19} in the case
$f, g \in \mathcal{S}(\mathbb{R}^d)$, and from \cite[Theorem 1.1]{VazquezHungSire} in the setting of bounded domains.

Next, we assume that $(n-1)_+ < \alpha \leqslant 1$, and we choose the test function $\psi = \psi_{S,\delta }(x)$ in (\ref{eq:weighted_entropy1}). 
The terms on the right-hand side of  \eqref{eq:weighted_entropy1} can then be estimated as follows:
\[
\begin{split}
I_1 &=  \iint_{\Omega_T} \Dusm (u^{\frac{\alpha +2}{2}} \psi_{S,\delta }) u^{\frac{\alpha + 2}{2}} \psi_{S,\delta } \Dusm  \psi_{S,\delta } \d x \d t   \\
&\quad\leqslant \tfrac{1}{2} \iint_{\Omega_T}  \abs{ \Dusm (u^{\frac{\alpha +2}{2}} \psi_{S,\delta })}^2   \psi_{S,\delta } \d x  \d t  + \tfrac{1}{2} 
 \iint_{\Omega_T}  u^{ \alpha + 2 } \psi_{S,\delta }  \abs{ \Dusm  \psi_{S,\delta } }^2 \d x  \d t  \\
&\quad\leqslant \tfrac{1}{2} \iint_{\Omega_T}  \abs{ \Dusm (u^{\frac{\alpha +2}{2}} \psi_{S,\delta }) }^2   \psi_{S,\delta } \d x  + \tfrac{C}{\delta^{2(s+1)}}
\int_{\Omega(S)}u^{ \alpha + 2 }  \d x  \d t  ,
\end{split}
\]
\[
\begin{split}
I_2 + I_3 &= \iint_{\Omega_T}   u^{\alpha +2} \bigl(  \psi_{S,\delta }^2 (- \Delta)^{s+1} \psi_{S,\delta } +  \tfrac{ \alpha  }{2(\alpha +2)} (-\Delta)^{s+1} \psi_{S,\delta }^3 \bigr) \, \d x \d t\\
 & \leqslant 
\tfrac{C}{\delta^{2(s+1)}} \iint_{\Omega_T(S)} u^{ \alpha + 2 }  \d x \d t ,
\end{split}
\]

\[
\begin{split}
I_4 & =   \iint_{\Omega_T} \Dusm  (u^{\frac{\alpha + 2}{2}} \psi_{S,\delta }) \,  R_{\frac{1+s}{2}}(u^{\frac{\alpha + 2}{2}} \psi_{S,\delta },\psi_{S,\delta }) \d x \d t     \\
&\leqslant \sigma \int_0^T \norm{ \Dusm  (u^{\frac{\alpha +2}{2}}\psi_{S,\delta }) }^2_{L^2(\Omega )} \d t +
C(\sigma) \int_0^T \norm{ R_{\frac{1+s}{2}}(u^{\frac{\alpha + 2}{2}} \psi_{S,\delta },\psi_{S,\delta })}^2_{L^2(\Omega )} \d t \\
&\leqslant \sigma \int_0^T  \norm{\Dusm  (u^{\frac{\alpha +2}{2}} \psi_{S,\delta }) }^2_{L^2(\Omega )} \d t + \tfrac{C(\sigma)}{\delta^{2(s+1)}}
\iint_{\Omega_T(S)} u^{ \alpha + 2 }  \d x \d t,
\end{split}
\]
$$
 I_5 = \iint_{\Omega_T} u^{\frac{\alpha + 2}{2}} \psi_{S,\delta }^2 R_{s+1} (u^{\frac{\alpha+2}{2}}, \psi_{S,\delta }) \d x \d t \leqslant     
 \tfrac{C}{\delta^{2(s+1)}} \left( \iint_{\Omega_T(S)} u^{ \alpha + 2 }  \d x \d t \right)^{\frac{1}{ 2}}
 \left( \iint_{\Omega_T } u^{ \alpha + 2 }  \d x \d t \right)^{\frac{1}{ 2}},
$$

$$
I_6 =\iint_{\Omega_T } {  \psi_{S,\delta }^3(x) \int_0^{+\infty} { \Bigl( \int_{\Omega} {  K(x,y,\tau) F(u(x), u(y)) \,dy}   \Bigr) \tfrac{d\tau}{\tau^{s+2}}} \d x \d t }
\leqslant  C \iint_{\Omega_T(S)} u^{ \alpha + 2 }  \d x \d t,
$$

\[
\begin{split}
I_7 &=   \iint_{\Omega_T} p(u) u^{\frac{\alpha}{2}} \psi_{S,\delta } \nabla (u^{\frac{\alpha+2}{2}} \psi_{S,\delta }) \cdot \nabla \psi_{S,\delta } 
  \, \tfrac{ m'(u) \mathscr{G}'_{\alpha}(u) }{u^{\alpha}}  \d x \d t
   \\
&\leqslant \tfrac{C}{\delta} \int_0^T \| p(u) u^{\frac{\alpha}{2}} \psi_{S,\delta }  \|_{L^2(\Omega )} \| \nabla (u^{\frac{\alpha+2}{2}} \psi_{S,\delta }) \|_{L^2(\Omega )} \d t  \\
&\leqslant \tfrac{C}{\delta} \int_0^T \| p(u) u^{\frac{\alpha}{2}} \psi_{S,\delta } \|_{L^2(\Omega )}
\| \Dusm  (u^{\frac{\alpha +2}{2}} \psi_{S,\delta }) \|^{\frac{1}{s+1}}_{L^2(\Omega )}\|  u^{\frac{\alpha +2}{2}} \psi_{S,\delta }  \|^{\frac{s}{s+1}}_{L^2(\Omega )} \d t
 \\
&\leqslant \sigma \int_0^T \| \Dusm  (u^{\frac{\alpha +2}{2}} \psi_{S,\delta }) \|^2_{L^2(\Omega )} \d t +
\tfrac{C(\sigma)}{\delta^{2(s+1)}} \iint_{\Omega_T(S)} u^{ \alpha + 2 }  \d x \d t \\
& + 
\tfrac{C(\sigma)}{\delta^{2(s+1)}} \|  u^{\frac{\alpha +2}{2}} \|^{\frac{2(1+s)}{2s+1}}_{L^2(\Omega_T )}
\|  u^{\frac{\alpha +2}{2}} \psi_{S,\delta }  \|^{\frac{2s}{2s+1}}_{L^2(\Omega_T )},
\end{split}
\]
\[
\begin{split}
I_8 + I_9 &= 3 \iint_{\Omega_T} p(u) u^{\frac{\alpha}{2}}\psi_{S,\delta } u^{\frac{\alpha+2}{2}} \left( 2\left[  \tfrac{m(u) \mathscr{G}'_{\alpha}(u)}{u^{\alpha+1}} 
- \tfrac{1}{\alpha+1} - \tfrac{1}{\alpha +2} \tfrac{ m'(u) \mathscr{G}'_{\alpha}(u) }{u^{\alpha}} \right]  \abs{\nabla \psi_{S,\delta }}^2 \right. \\
&+ \left.\left[  \tfrac{m(u) \mathscr{G}'_{\alpha}(u)}{u^{\alpha+1}} - \tfrac{1}{\alpha+1}   \right] \psi_{S,\delta } \Delta \psi_{S,\delta } \right)  
\d x \d t \\
&\leqslant  \tfrac{C}{\delta^2} \int_0^T \norm{ p(u) u^{\frac{\alpha}{2}} \psi_{S,\delta } }_{L^2(\Omega )}  \norm{ u^{\frac{\alpha +2}{2}} }_{L^2(\Omega(S) )} \d t \leqslant
  \sigma \int_0^T \norm{ \Dusm  (u^{\frac{\alpha +2}{2}} \psi_{S,\delta }) }^2_{L^2(\Omega )} \d t  \\
&+  \tfrac{C(\sigma)}{\delta^{2(s+1)}} \iint_{\Omega_T(S)} u^{ \alpha + 2 }  \d x \d t  +
\tfrac{C(\sigma)}{\delta^{2(s+1)}} \norm{  u^{\frac{\alpha +2}{2}}}_{L^2(\Omega_T )}
\norm{ u^{\frac{\alpha +2}{2}}}_{L^2(\Omega_T(S) )} .
\end{split}
\]

Since $u^{\frac{\alpha +2}{2}} \psi_{S,\delta } \in H^{s+1}_N (\Omega)$ and $\text{supp} (u^{\frac{\alpha +2}{2}} \psi_{S,\delta }) \subset \Omega(S) $ then,
due to (\ref{fr-1}) with $\mu = \frac{s+1}{2}$, we get
\begin{equation}\label{rrr-int}
\| (-\Delta)^{\frac{s+1}{2}} ( u^{\frac{\alpha +2}{2}} \psi_{S,\delta })  \|_{L^2(\Omega)} \leqslant   
 \| (-\Delta)^{\frac{s+1}{2}} (u^{\frac{\alpha +2}{2}} \psi_{S,\delta })  \|_{L^2(\Omega(S+\delta))} + \tfrac{C}{\delta^{s+1}}  \| u^{\frac{\alpha +2}{2}}   \|_{L^2(\Omega(S) )}.
\end{equation}
Using these estimates,  we eventually arrive at
\[
\begin{split}
&\int_{\Omega(S+  \delta)}{\mathscr{G}_{\alpha}^{\eps,\delta} (u) \,dx}  +
\tfrac{2}{(\alpha +2)^2} \iint_{\Omega_T(S +\delta) }{ |(-\Delta)^{\frac{s+1}{2}}(u^{\frac{\alpha +2}{2}} \psi_{S,\delta } ) |^2   \,\d x  \d t}   \\
&\leqslant \int_{\Omega(S)}{\mathscr{G}_{\alpha}^{\eps,\delta}(u_{0,\eps,\delta}) \,dx} +
\sigma \iint_{\Omega_T(S+\delta) }{ |(-\Delta)^{\frac{s+1}{2}}(u^{\frac{\alpha +2}{2}} \psi_{S,\delta } ) |^2   \,\d x  \d t}
+ \tfrac{C }{\delta^{2(s+1)}}   \iint_{\Omega_T(S)} {  u^{\alpha+ 2}  \, \d x \d t}  \\
&+\tfrac{C }{\delta^{2(s+1)}} \Bigl( \iint_{\Omega_T(S)} {  u^{\alpha+ 2}  \, dx dt} \Bigr)^{\frac{s}{2s+1}}
 \Bigl(\iint_{\Omega_T} {  u^{\alpha+ 2}  \, dx dt} \Bigr)^{\frac{s+1}{2s+1}}   \\
&+\tfrac{C}{\delta^{ 2(s+1) }}  \Bigl( \iint_{\Omega_T(S)} {  u^{\alpha+ 2}  \, \d x \d t} \Bigr)^{\frac{1}{2}}
 \Bigl(\iint_{\Omega_T} {  u^{\alpha+ 2}  \, dx dt} \Bigr)^{\frac{ 1}{2 }} ,
\end{split}
\]
whence choosing $\sigma\in (0,1)$ small enough and $T >0$, after letting $\varepsilon,\,\delta \to 0$, we obtain (\ref{main-alp}).
\end{proof}

\subsection{Proof of FSP for $n \in (\tfrac{2(s+1)}{ 4(s+1)-d },2)$}

We assume that $\Omega = B_R (0)$. 
We prove the property of finite propagation speed by contradiction. More precisely, we assume that for all
$t \in (0,T]$, the support of $u(\cdot, t)$ is $\bar{\Omega}$, i.\,e., $\operatorname{supp}u(\cdot, t) = \bar{\Omega}$
for every $t \in (0,T]$. We are thus assuming that $u(\cdot, t)>0$ in $\bar{\Omega}$ and for any  $t\in (0,T]$.
The argument will show that there exists $T^{**}>0$ and a function $d=d(T)<R$ such that the entropy concentrated on annuli \eqref{eq:ene_annuli} verifies
$$
A_T(S) := \iint_{\Omega_T(S)}u^{\alpha+2}\d x \d t = 0 \qquad\text{for all }T\in [0,T^{**}] \text{ and for all }  S\geqslant   d(T).
$$
This is clearly in contradiction with the assumed $u >0$ in $\bar{\Omega}$.

The $\alpha$-entropy inequality (\ref{eq:alpha-entropy-u1}) and Sobolev embeddings give that the solution $u$ is {H\"{o}lder continuous in $\bar{\Omega}$ for a.\,e. $t \in (0,T)$. Thus, since $u>0$, by the Weierstrass Extreme value Theorem there exists $ c(t) := \mathop {\min}_{x \in \bar{\Omega}} u^{\frac{\alpha +2}{2}}(t,x) \in L^2(0,T)$ such that
$$
u^{\frac{\alpha +2}{2}}(\cdot, t)\geqslant  c(t) > 0 \qquad \text{ in }  \bar{\Omega} \text{ for  a.\,e. } t \in (0,T].
$$ }
Consequently,
$$
A_T(S) \geqslant  \frac{\pi^{\frac{d}{2}}}{\Gamma( \frac{d}{2} + 1 )}R^{d-1}(R-S)
\int_0^T {{c^2(t)} \,dt} =:  C_0(T) (R - S).
$$
Let $\gamma_T:[0,R]\to [0,+\infty)$ be a non-increasing function such that
\begin{equation}\label{ddr}
\begin{cases}
0 < \gamma_T(0) \leqslant   C_0(T) R, \\
\gamma_T(S) \leqslant   C_0(T) (R-S)  &\text{for all } S\in[0,R],\\
\gamma_T(S) > 0 & \text{for all }  S\in[0,R),\\
\gamma_T(S + \delta)  \leqslant   \kappa  \gamma_T^{\beta + \frac{\beta}{\lambda} }(S)  &\text{for all }  S \geqslant  0, \ \delta > 0,
\end{cases}
\end{equation}
with $0 < \gamma_T(0) < \min\{1, C_0(T) \}R $, where $\lambda>0$ is a free parameter while $\beta$ and $\kappa$ verify
\begin{equation} \label{eq:parameters}
\beta :=  1+\tfrac{n(1-\theta) }{\alpha - n + 2 } > 1, \qquad
\kappa \in \Bigl(0,  \frac{1 - \frac{\gamma_T(0)}{R} }{ \gamma_T^{\beta -1 + \frac{\beta}{\lambda}}(0)} \Bigr).
\end{equation}
We observe that a function satisfying the assumptions (\ref{ddr}) exists, e.\,g. $\gamma_T(S) = \frac{\gamma_T(0)}{R}(R-S)_+^{(\beta + \frac{\beta}{\lambda})^{\frac{S}{\delta}}}$ 
and $R = \kappa^{\frac{1}{\beta + \frac{\beta}{\lambda} -1}} \gamma_T(0) \in (0,1]$.
Thanks to the Stampacchia's lemma (see Lemma~\ref{lem-st-n}) a function satisfying the assumptions \eqref{ddr} also verifies $\gamma_T(S) \equiv 0 $ for all $S \geqslant  R $.
As a result, we have
\begin{equation}\label{as-b}
 A_T(S) \geqslant  \gamma_T(S )   \qquad\text{for all } T >0, \  S \in [0,R],
\end{equation}
and, thus, for any $S\in [0,R]$ and for any $T>0$, calling $\nu := 1-\varpi>0$,
\begin{equation}\label{eq:A_T}
A_T(S) +  A^{1 - \varpi}_T(0) A_T^{\varpi}(S) \leqslant 2 \left ( \tfrac{A_T (0) }{\gamma_T(S)}\right)^{\nu} A_T(S) =: \tilde{C}_T(S) A_T(S).
\end{equation}
Estimate (\ref{main-alp}) becomes
\begin{equation} \label{e-2-004}
\begin{split}
\int_{\Omega(S+ \delta)}{\mathscr{G}_{\alpha}(u) \,dx} & +
C \iint_{\Omega_T(S+ \delta)}{ \bigl|(-\Delta)^{\frac{s+1}{2}}(u^{\frac{\alpha +2}{2}} \psi_{S,\delta } )  \bigr|^2   \,dx dt}  \\
& \leqslant \int_{\Omega(S)}{\mathscr{G}_{\alpha}(u_0) \,dx} +
\tfrac{\tilde{C}_T(S)}{ \delta^{2(s+1)  } } A_T(S)  =: R_T(S,\delta).
\end{split}
\end{equation}

Applying the Gagliardo--Nirenberg type interpolation inequality (see Lemma~\ref{G-N-nn})
in the region $\Omega$ to the function $v(\cdot,t) :=  u^{\frac{\alpha +2}{2}} (\cdot,t)$ with $b = \frac{2(\alpha - n+2)}{\alpha+2}$
 and $\theta  = \tfrac{n d}{n d + 2(s+1) (\alpha -n + 2)}$, after
integrating in time and using (\ref{eq:alpha-entropy-u1}), we get
$$
A_T(0) \leqslant   C\,T^{\mu} \|u_0\|^{(\alpha-  n+2)\beta}_{L^{\alpha-  n+2}(\Omega)} + C\,T  \|u_0\|^{2}_{L^{\alpha-  n+2}(\Omega)}, \ \ \mu := 1-\theta,
$$
whence
\begin{equation}\label{a-in}
A_T(0) \leqslant   C\,T^{\mu} \|u_0\|^{(\alpha-  n+2)\beta}_{L^{\alpha-  n+2}(\Omega)} {\text{ for small enough }} T > 0.
\end{equation}

Next, applying the homogeneous Gagliardo-Nirenberg type interpolation inequality (see Lemma~\ref{G-N-nn})
in the region $\Omega(S+  \delta)$ to the function $v(\cdot,t) := u^{\frac{\alpha +2}{2}}(\cdot,t)\psi _{S+\delta,\delta }(\cdot) $ with 
$b = \frac{2(\alpha - n+2)}{\alpha+2}$ and $\theta  = \tfrac{n d}{n d + 2(s+1) (\alpha -n + 2)}$, we find that
\begin{eqnarray} \label{rrr-ll}
\| u^{\frac{\alpha +2}{2}} \psi _{S+\delta,\delta } \|^{ 2}_{L^{ 2}(\Omega(S+\delta))}
&\leqslant C \|  (-\Delta)^{\frac{s+1}{2}} ( u^{\frac{\alpha +2}{2}} \psi _{S+\delta,\delta } ) \|^{ 2 \theta }_{L^{2}  (\Omega)}  \| u^{\frac{\alpha +2}{2}} \psi _{S+\delta,\delta } \|^{
2(1-\theta) }_{L^{\frac{2(\alpha -n +2)}{\alpha +2} }(\Omega(S+ \delta))}.
\end{eqnarray}
From (\ref{rrr-ll}), using (\ref{rrr-int}), it follows that
\[
\begin{split}
\|   u^{\frac{\alpha +2}{2}} \psi _{S+\delta,\delta } \|^{2}_{L^{2}(\Omega(S+\delta))} &
\leqslant
C \|  (-\Delta)^{\frac{s+1}{2}} ( u^{\frac{\alpha +2}{2}} \psi _{S+\delta,\delta } ) \|^{2\theta }_{L^{2}  (\Omega(S+2\delta))}  \| u^{\frac{\alpha +2}{2}} \psi _{S+\delta,\delta } \|^{ 2(1-\theta) }_{L^{\frac{2(\alpha -n +2)}{\alpha +2} }(\Omega(S+ \delta))} \\
&+  ( \tfrac{C}{  \delta})^{ 2 \theta (s+1 ) } \|    u^{\frac{\alpha +2}{2}} \psi _{S+\delta,\delta }  \|^{ 2 \theta  }_{L^{2}  (\Omega(S+ \delta))}  \| u^{\frac{\alpha +2}{2}} \psi _{S+\delta,\delta } \|^{ 2 (1-\theta) }_{L^{\frac{2(\alpha -n +2)}{\alpha +2} }(\Omega(S+ \delta))},
\end{split}
\]
whence, using  Young's inequality, we arrive at
\begin{equation}\label{int-3}
 \begin{split}
 \|   u^{\frac{\alpha +2}{2}}  \|^{2}_{L^{2}(\Omega(S+ 2\delta))}  &\leqslant 
C \|  (-\Delta)^{\frac{s+1}{2}} ( u^{\frac{\alpha +2}{2}} \psi _{S+\delta,\delta } ) \|^{ 2\theta  }_{L^{2}  (\Omega(S+ 2\delta))}
 \| u   \|^{ (\alpha +2)(1-\theta) }_{L^{ \alpha -n +2}(\Omega(S+ \delta))}   \\
& +\tfrac{C}{  \delta^{\frac{ 2\theta(s+1)}{ 1-\theta  }}}
\| u \|^{ \alpha + 2}_{L^{ \alpha -n +2 } (\Omega(S+ \delta))} .
\end{split}
\end{equation}
Integrating  (\ref{int-3}) with respect to time, using H\"older's inequality and \eqref{e-2-004}, exchanging $S +\delta$
with $S$, we arrive at the following relation:
$$
A_T(S + \delta) \leqslant C\,T^{ 1-\theta  } R^{1+\frac{n(1-\theta) }{\alpha - n + 2  }}_T(S,\delta)
+ \tfrac{C}{  \delta^{\frac{2\theta(s+1)}{ 1-\theta  }}} T  R^{1+\frac{n}{\alpha - n + 2}}_T(S,\delta).
$$
Now, we consider $S \geqslant S_1 \geqslant r_0$ which implies that $u_0\equiv 0$ on $\Omega(S)$, 
 therefore  $\mathscr{G}_{\alpha}(u_0)  = 0$  for all $ x \in \Omega(S)$. Consequently,
$$
R_T(S,\delta) \leqslant   \tfrac{C}{ \delta^{2(s+1)} }  A_T(0).
$$
Therefore, we can find $T^*=T^*(\delta)$ such that, for $T\leqslant   T^*$ (and $S\geqslant  S_1$), 
\begin{equation}\label{eq:fsp_energy1}
A_T(S + \delta) \leqslant   C\,T^{ 1-\theta  } R^{1+\frac{n(1-\theta) }{\alpha - n + 2  }}_T(S,\delta).
\end{equation}
Moreover, due to (\ref{eq:A_T}), we deduce that
\begin{equation}\label{e-12}
A_T(S +\delta) \leqslant  C_4 T^{\mu} \bigl(\tilde{C}_T(S) \delta^{- \tilde{\alpha} } A_T(S) \bigr)^{\beta}
\end{equation}
for all $S \geqslant S_1 \geqslant r_0$, where
$$
\mu =  1-\theta  , \ \tilde{\alpha} = 2(s+1) , \
\beta = 1+\tfrac{n(1-\theta) }{\alpha - n + 2 } > 1.
$$
Hence, calling $ \tilde{A}_T(S ) : = \tilde{C}_T^{ - \lambda}(S) A_T(S) $, by (\ref{e-12})  
we have
\begin{equation}\label{e-12-000}
\tilde{A}_T(S +\delta) \leqslant  C_4 \kappa^{\lambda \nu} T^{\mu} \bigl( \delta^{-\tilde{\alpha}} \tilde{A}_T(S) \bigr)^{\beta}
\end{equation}
for $S \geqslant S_1$. Applying the classical Stampacchia's lemma (see Lemma~\ref{L-1}(i))
to (\ref{e-12-000})
with $\alpha = \tilde{\alpha} \beta$, $\beta  > 1$, and $C = C_4 \kappa^{\lambda \nu}   T^{\mu}$,
we find that
\begin{equation}\label{e-13}
\tilde{A}_T(S) = 0 \ \ \ \forall\, S \geqslant d(T) = S_1 + 2^{\frac{\beta}{\beta -1}} (C_4 \kappa^{\lambda \nu} T^{\mu}  \tilde{A}_T^{\beta -1}(S_1) )^{\frac{1}{\tilde{\alpha}\beta}}.
\end{equation}
As, in view of (\ref{a-in}),
$$
\tilde{A}_T(S_1) = 2^{-\lambda} \bigl ( \tfrac{\gamma_T(S_1)}{A_T(0)} \bigr)^{\lambda \nu} A_T(S_1) \leqslant 
2^{-\lambda} A_T(0) \leqslant C \,  T^{\mu} \|u_0\|_{L^{\alpha -n +2}(\Omega)}^{(\alpha -  n + 2)\beta}  
$$
then  an upper bound for $d(T)$ is
$$
d(T) \leqslant S_1 + C_5 T^{\frac{\mu }{\tilde{\alpha}}  } =
S_1 + C_5  T^{\frac{ 1-\theta  }{  2(s+1)  }  } .
$$
For $T \leqslant   T^{**} := \min \bigl\{T^*, \bigl(\frac{R-S_1 }{C_5}\bigr)^{\frac{2(s+1)}{ 1-\theta } }\bigr\}$ we have that
$$
C_5 \, T^{\frac{  1-\theta  }{  2(s+1)  }  }  < R - S_1.
$$
Thus, for $T\leqslant   T^{**}$ we have $d(T) <R$.
On the other hand, by (\ref{as-b}) we have
$$
\tilde{A}_T(S) \geqslant 2^{-\lambda} A_T^{-\lambda \nu}(0) \gamma_T^{1 + \lambda\nu}(S) \geqslant 
 C \, \gamma_T^{1 + \lambda\nu}(S) > 0 \text{ for all } S \in [0,R). 
$$
So, since $\tilde{A}_T(S) = 0$ for all $S \geqslant  d(T) $,  and  $d(T) <  R $  for all {$T \in [0,T^{**}]$}, we obtain the contradiction to (\ref{as-b}).

Next, we look for an exact estimate for interface speed. For this, it remains to estimate
$\tilde{A}_T(S_1)$  and to choose an optimal value of $S_1$.
Applying the homogeneous Gagliardo-Nirenberg type interpolation inequality (see Lemma~\ref{G-N-nn})
in the region $\Omega(S+\delta)$ to the function $v(\cdot,t) := u^{\frac{\alpha +2}{2}}(\cdot,t)\psi _{S+\delta,\delta }(\cdot) $
with $ b = \frac{2}{\alpha +2} $  and $\theta  = \tfrac{d(\alpha+1)}{d(\alpha+1)+2(s+1)}$, we find that
\[
\begin{split}
\| u^{\frac{\alpha +2}{2}} \psi _{S+\delta,\delta }  \|^2_{L^2(\Omega(S+ \delta))} &\leqslant 
 C\, M^{\frac{4(s+1)}{d(\alpha+1)+ 2(s+1)}}  \|(-\Delta)^{\frac{s+1}{2}} ( u^{\frac{\alpha +2}{2}} \psi _{S+\delta,\delta } ) \|^{\frac{2d(\alpha+1)}{d(\alpha+1)+ 2(s+1)}}_{L^{2}  (\Omega)}  \\
& \leqslant C\, M^{\frac{4(s+1)}{d(\alpha+1) + 2(s+1)}}  \|(-\Delta)^{\frac{s+1}{2}} ( u^{\frac{\alpha +2}{2}} \psi _{S+\delta,\delta } ) \|^{\frac{2d(\alpha+1)}{d(\alpha+1) + 2(s+1)}}_{L^{2}  (\Omega(S+ 2\delta))} \\
&+ \tfrac{C M^{\frac{4(s+1)}{d(\alpha+1) + 2(s+1)}}}{  \delta ^\frac{2d(\alpha+1)(s+1)}{d(\alpha+1) + 2(s+1)} } \| u^{\frac{\alpha +2}{2}} \psi _{S+\delta,\delta }  \|^{\frac{2d(\alpha+1)}{d(\alpha+1) + 2(s+1)}}_{L^{2}  (\Omega(S+ \delta))}  ,
\end{split}
\]
whence
\begin{equation}\label{int-5}
\| u^{\frac{\alpha +2}{2}}  \|^2_{L^2(\Omega(S+2\delta))}   \leqslant  
 C\, M^{\frac{4(s+1)}{d(\alpha+1) + 2(s+1)}}  \|(-\Delta)^{\frac{s+1}{2}} ( u^{\frac{\alpha +2}{2}} \psi _{S+\delta,\delta } ) \|^{\frac{2d(\alpha+1)}{d (\alpha+1) + 2(s+1)}}_{L^{2}  (\Omega(S+ 2\delta))} +
\tfrac{C M^{2}}{ \delta ^{d(\alpha+1)}} .
\end{equation}
Integrating  (\ref{int-5}) with respect to time, using H\"older's inequality and (\ref{e-2-004}),
exchanging $S + \delta$ with $S$, we arrive at the following
relation:
\begin{equation}\label{e-12-2}
A_T(S + \delta) \leqslant  C_6(M) T^{\frac{2(s+1)}{d(\alpha+1) +2(s+1)}} \bigl( \delta^{-\tilde{\alpha}} \tilde{C}_T(S)  A_T(S) \bigr)^{ \frac{d(\alpha+1)}{d(\alpha+1) +2(s+1)}}
\end{equation}
for $S \geqslant r_0  $ and enough small $T >0$. Hence, by (\ref{e-12-2}) for
$ \tilde{A}_T(S ) : = \tilde{C}_T^{ - \lambda }(S) A_T(S) $ we have
\begin{equation}\label{e-12-222}
\tilde{A}_T(S +\delta) \leqslant  C_7(M)  \kappa^{\lambda \nu}  
 T^{\frac{2(s+1)}{d(\alpha+1) +2(s+1)} } \bigl( \delta^{- \tilde{\alpha}} \tilde{A}_T(S) \bigr)^{ \frac{d(\alpha+1)}{d(\alpha+1) +2(s+1)} }
\end{equation}
for $S \geqslant r_0 $. Applying the Stampacchia's lemma \ref{L-1}
to (\ref{e-12-222})
with ${\alpha} =\frac{d\tilde{\alpha}(\alpha+1)}{d(\alpha+1) +2(s+1)}  $, ${\beta} =  \frac{d(\alpha+1)}{d(\alpha+1) +2(s+1)}  < 1$, and
$ 
C =  C_7(M)  \kappa^{\lambda \nu}   T^{\frac{2(s+1)}{d(\alpha+1) +2(s+1)} } 
$, 
 we find that
\begin{equation}\label{int-6}
\tilde{A}_T(S) \leqslant \bigl(C_8(M) T + C\,r_0^\frac{ d \tilde{\alpha}(\alpha+1)}{2(s+1)} \tilde{A}_T(r_0) \bigr) \, S^{- \frac{ d \tilde{\alpha}(\alpha+1)}{2(s+1)}} \leqslant   
 2C_8(M) T \, S^{- \frac{ d \tilde{\alpha}(\alpha+1)}{2(s+1)}} \ \  \forall\,S \geqslant r_0,
\end{equation}
where
$$
C_8(M) = 2^{\frac{d \tilde{\alpha}(\alpha+1) (d(\alpha+1)+2(s+1))}{4(s+1)^2}}C_7^{\frac{ d(\alpha+1)+2(s+1) }{2(s+1)}}(M)  \kappa^{ \frac{ \lambda \nu ( d(\alpha+1)+2(s+1)) }{2(s+1)} }.
$$
%From (\ref{e-13}), using  (\ref{int-6}) with $S = \Gamma(T) := d(T)- r_0 \leqslant r_0$, we arrive at
%\begin{multline*}
%\Gamma(T) \leqslant  2^{\frac{\beta}{\beta -1}} \bigl[C_4 T^{\mu}
%(\kappa A^{\beta -1 + \frac{\beta}{\lambda}}_T(0))^{\lambda\nu} \tilde{A}_T^{\beta -1}(\Gamma(T)) \bigr]^{\frac{1}{ \tilde{\alpha}\beta}}
%\leqslant    \\
%C_9(M) T^{\frac{\mu + \beta -1}{\tilde{\alpha} \beta}} (\Gamma(T))^{-\frac{ d( \beta -1 )(\alpha+1)}{ 2\beta(s+1) }},
%\end{multline*}
%where $ C_9(M) = 2^{\frac{\beta}{\beta -1}}C_4^{\frac{1}{\tilde{\alpha} \beta}} (\kappa A^{\beta -1 + \frac{\beta}{\lambda}}_T(0))^{\frac{\lambda\nu}{\tilde{\alpha} \beta}}
%C^{\frac{\beta -1}{ \tilde{\alpha}\beta}}_8(M) $. So,
%\begin{multline*}
% \Gamma(T) \leqslant
%  C^{\frac{2 \beta (s+1)}{ 2\beta(s+1) + d(\beta-1)(\alpha+1) }}_9(M)
%T^{ \frac{2(s+1)(\mu + \beta -1) }{ \tilde{\alpha} ( 2\beta(s+1) + d(\beta-1)(\alpha+1) ) } } =   \\
%C_{10}(M) T^{ \frac{2(s+1)}{ \tilde{\alpha} (  n d + 2(s + 1))}} =
%C_{10}(M) T^{ \frac{1}{   nd + 2(s+1)   }} .
%\end{multline*}
%As a result, we find that
%$$
%d(T) \leqslant  r_0 + C_{10}(M) T^{ \frac{1}{   nd + 2(s+1)   }}  \ \ \ \forall\, T \leqslant T^{**}.
%$$
%%%%%
Combining (\ref{int-6}) for $S = S_1 - r_0$ with (\ref{e-13}), we get
$$
d(T) \leqslant S_1 + C_9(M)\, T^{\frac{\mu +\beta -1}{ \tilde{\alpha}\beta}}
(S_1 - r_0)^{- \frac{ d \tilde{\alpha}(\alpha+1)}{2(s+1)}}.
$$
Minimizing the right hand side on $S_1$, we find that
$$
d(T) \leqslant r_0 + C(M) T^{\frac{1}{nd + 2(s+1)}}.
$$

\section{Lower-bound on the waiting time} \label{sec:LWT}

We now turn to the proof of the optimal lower bounds on waiting times.

\begin{proof}[Proof of Theorem~\ref{th:wt}]
We assume that $\supp u_0 \subset \Omega \setminus \Omega(r_0)$.
We prove that there exists a time $T_0\in (0,T^{*}]$ such that we have
\begin{align*}
u(\cdot, t) = 0 \quad \text{ in } \Omega(r_0) \quad \text{for}\quad 0 < t < T_0.
\end{align*}
With the same notations as in the proof of Theorem~\ref{th:1}, we recall that we have the refined entropy inequality:
$$
 \int_{\Omega(S+ \delta)}{\mathscr{G}_{\alpha}(u) \,\d x}  +
C \iint_{\Omega_T(S+ \delta)}{ \bigl|(-\Delta)^{\frac{s+1}{2}}(u^{\frac{\alpha +2}{2}} \psi_{S,\delta } )  \bigr|^2   \, \d x \d t}   
 \leqslant     \int_{\Omega( S)}{\mathscr{G}_{\alpha} (u_0) \dd x} + \tfrac{\tilde{C}_T(S)}{ \delta^{2(s+1)  } } A_T(S).
$$
Arguing as in (\ref{e-12-000}),  we arrive at
\begin{align}\label{wtp-inq}
\begin{aligned}
\tilde{A}_T(S+\delta) &\leqslant    C_4 \kappa^{\lambda \nu}  T^{\mu} \left( \delta^{- 2(s+1) } \left( \int_{\Omega(S)} \delta^{2(s+1)} \mathscr{G}_{\alpha}(u_0)  \dd x  + \tilde{A}_T(S) \right) \right)^{\beta} \\
& \leqslant   C_4 \kappa^{\lambda \nu} T^{\mu} \left( \delta^{-2(s+1)} \bigl(  \mathcal{S} \, \delta^{\sigma}  + \tilde{A}_T(S) \bigr) \right)^{\beta} 
\end{aligned}
\end{align}
holds for all $S \in [r_0-\delta, r_0]$, where we introduced the notations
\begin{align*}
&\mu =   1-\theta , \quad \theta  = \frac{nd}{nd + 2(s+1) (\alpha -n +2)}, \quad
\beta  =  1 + \frac{n(1-\theta) }{\alpha -n +2}, \\
& \sigma  = d + 2(s+1) + (\alpha -n +2)\gamma,\quad \mathcal{S} :=  \sup_{\delta>0} \delta^{-\gamma(\alpha -n +2)} \fint_{ B_{r_0}(0) \setminus B_{r_0 -\delta}(0) } \mathscr{G}_{\alpha}(u_0(x))  \dd x.
\end{align*}  

Arguing as in \cite[Theorem 2.1]{GiacomelliGruen} we deduce that 
there exists a time $T_0 > 0$ such that
\begin{align*}
\tilde{A}_{T_0} (r_0) \equiv 0  \Rightarrow  \int_0^{T_0} \int_{ \Omega(r_0)} u^{\alpha +2} (t,x) \dd x \dd t \equiv 0
\end{align*}
provided
$$
\sigma = d + 2(s+1) + (\alpha -n+ 2)\gamma \geqslant   \tfrac{2\beta(s+1)}{\beta-1}, \text{ i.\,e. } \gamma \geqslant   \tfrac{2(s+1)}{n}.
$$
Specifically, we define 
$$
f(\xi) = \tilde{A}_T(S+\delta), \ \ \ f(\eta) = \tilde{A}_T(S), \ \ \ \xi -\eta = \delta.
$$
As a result, inequality (\ref{wtp-inq}) becomes
$$
f(\xi) \leqslant \tfrac{C_4 \kappa^{\lambda \nu} T^{\mu}}{(\xi -\eta)^{2\beta(s+1)}} (f(\eta) + \mathcal{S}(\xi -\eta)^{\sigma})^{\beta}.
$$
To apply the inhomogeneous Stampacchia lemma (Lemma~\ref{lem:stampacchia2}) with 
$$
R = r_0 +\delta, \ \ \ c_0 = C_4 \kappa^{\lambda \nu} T^{\mu}, \ \ \ 
\tilde{\mathcal{S}} = \mathcal{S}, \ \ \ \alpha = 2\beta(s+1), \ \ \ \beta > 1,
$$ 
it suffices to verify that 
$$
(r_0 +\delta)^{\frac{\alpha}{\beta -1}} \geqslant (2^{\frac{\beta(\alpha + \beta -1)}{\beta -1}}C_4 \kappa^{\lambda \nu} T^{\mu} )^{ \frac{1}{\beta -1} } ( \tilde{A}_T(0) + \mathcal{S} (r_0 +\delta)^{\frac{\alpha}{\beta-1}}),
$$
where we take $\sigma \geqslant \frac{\alpha}{\beta-1}$. The above condition reduces to 
$$
(2^{\frac{\beta(\alpha + \beta -1)}{\beta -1}} C_4 \kappa^{\lambda \nu} )^{ \frac{1}{\beta -1} } \mathcal{S}   T^{ \frac{\mu}{\beta -1} }    \leqslant 1, \text{ whence } T  \leqslant (2^{\frac{\beta(\alpha + \beta -1)}{\beta -1}} C_4 \kappa^{\lambda \nu} )^{- \frac{1}{\mu}} \mathcal{S}^{ - \frac{\beta -1}{\mu}} .
$$
Consequently, we obtain the following lower bound for the waiting time: 
$$
T_0 \geqslant \left(2^{\frac{\beta(\alpha + \beta -1)}{\beta -1}}  C_4 \kappa^{\lambda \nu} \right)^{-\frac{1}{\mu}} \mathcal{S}^{ - \frac{\beta -1}{\mu}}
= C\, \mathcal{S}^{-\frac{n}{\alpha -n + 2}},
$$
and we conclude that 
$$
\tilde{A}_T(r_0 +\delta) = 0 \text{ for all } T \in [0,T_0].
$$
Since $\delta > 0$ is arbitrary, by the monotonicity of $\tilde{A}_T(S)$ with respect to $S$, we obtain that $\tilde{A}_T(r_0) = 0$ for all $T \in [0,T_0]$.

%{\color{red} We now rely on the Stampacchia's Lemma \ref{lem:stampacchia2}. 
%In particular we have to ensure that condition \eqref{stamglem:h21} is satisfied for some choices of $R$ and of $T\in (0,T^*]$.
%Arguing as in \cite[Theorem 2.1]{GiacomelliGruen} we deduce that}
%%Arguing as in \cite{GiacomelliGruen}, by relying on Stampacchia's lemma (see Lemma~\ref{lem:stampacchia2}),
%%we have that 
%there exists a time $T_0$ such that
%\begin{align*}
%\tilde{A}_{T_0} (r_0) \equiv 0  \Rightarrow  \int_0^{T_0} \int_{ \Omega(r_0)} u^{\alpha +2} (t,x) \dd x \dd t \equiv 0
%\end{align*}
%provided
%$$
%d + 2(s+1) + (\alpha -n+ 2)\gamma \geqslant   \frac{2\beta(s+1)}{\beta-1},
%$$
%i.\,e. $\gamma \geqslant   \frac{2(s+1)}{n}$,  and that the following lower bound holds:
%\begin{align*}
%T_0 \geqslant  C \mathcal{S}^{-\frac{\beta-1}{\mu}} = C \mathcal{S}^{-\frac{n}{\alpha -n + 2}}.
%\end{align*}

\end{proof}

\vspace{5mm}
\section*{Acknowledgments}

Antonio Segatti acknowledges support from PRIN 2022 (Project no. 2022J4FYNJ), funded by MUR, Italy, and the European Union -- Next Generation EU, Mission 4 Component 1 CUP F53D23002760006 and the support of Indam-Gnampa.

Roman Taranets was supported by NRFU project no. 2023.03/0074 ``Infinite-dimensional evolutionary equations with multivalued and stochastic dynamics'' 
and by a grant from the Simons Foundation (Award no. 00017674, Presidential Discretionary Ukraine Support Grants).

\appendix

\section{Auxiliary Lemmas}
\label{app:lemmas}
%{\color{red}I would reorganize the appendix leaving here only those results which are non original and that the reader might skip.}
%{\color{red}I would prefer to make a Section called ''Technical Instruments'' (or similar names) containing Lemma A4, Corollary A.1. }  
%
%{\color{red}I would also put Proposition A.1 in the paper, before proving Existence. }

%\begin{lemma}[Gagliardo-Nirenberg's interpolation inequality]\label{A.4}
% If $\Omega  \subset \mathbb{R}^N $ is a bounded
%domain with piecewise-smooth boundary, $a > 1$, $b \in (0, a),\ d
%> 1,$ and $0 \leqslant i < j,\ i,j \in \mathbb{N}$, then there
%exist positive constants $d_1$ and $d_2$ $(d_2 = 0 \text{ if }
%\Omega$ is unbounded$)$ depending only on $\Omega ,\ d,\ j,\ b,$
%and $N$ such that the following inequality is valid for every
%$v(x) \in W^{j,d} (\Omega ) \cap L^b (\Omega )$:
%$$
%\left\| {D^i v} \right\|_{L^a (\Omega )}  \leqslant d_1 \left\|
%{D^j v} \right\|_{L^d (\Omega )}^\theta  \left\| v \right\|_{L^b
%(\Omega )}^{1 - \theta }  + d_2 \left\| v \right\|_{L^b (\Omega )}
%,\quad \theta  = \tfrac{{\tfrac{1} {b} + \tfrac{i} {N} - \tfrac{1}
%{a}}} {{\tfrac{1} {b} + \tfrac{j} {N} - \tfrac{1} {d}}} \in \left[
%{\tfrac{i} {j},1} \right)\!\!.
%$$
%\end{lemma}

\begin{lemma}[Gagliardo--Nirenberg-type inequality] $($see \cite[Lemma A.1]{DNLST24}$)$\label{G-N-nn}
 If $\Omega  \subset \mathbb{R}^N $ is a bounded
domain with piecewise-smooth boundary, $b \in (0, 2)$  and $s \in (0,1)$, then there
exist  positive constants  $C_1$ and $C_2$ ($C_2 =0$ if $\Omega$ is unbounded or $w(x)$ has a compact support)
depending only on $\Omega ,\ s,\ b,$
and $N$ such that the following inequality is valid for every
$w(x) \in H^{s+1}(\Omega ) \cap L^b (\Omega )$:
$$
\| w  \|_{L^2 (\Omega )}  \leqslant   C_1  \|
(-\Delta)^{\frac{s+1}{2}} w  \|_{L^2 (\Omega )}^\theta  \| w  \|_{L^b
(\Omega )}^{1 - \theta } + C_2 \| w  \|_{L^b
(\Omega )}, \quad  \theta  = \frac{{\frac{1} {b}   - \frac{1}
{2}}} {{\frac{1} {b} + \frac{s+1} {N} - \frac{1} {2}}} \in  [0,1 ) .
$$
\end{lemma}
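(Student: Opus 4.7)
The plan is to derive this Gagliardo--Nirenberg-type inequality by combining the spectral identification of $(-\Delta)^{(s+1)/2}$ on $\Omega$ with the classical Sobolev embedding and a standard Hölder interpolation between Lebesgue spaces.

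First, I would use Proposition~\ref{prop:2r} to identify $\|(-\Delta)^{(s+1)/2}w\|_{L^2(\Omega)}$ with the homogeneous seminorm $\|w\|_{\dot H^{s+1}_N(\Omega)}$, and then invoke the equivalence in (\ref{eq:equiv}) to pass to the full Sobolev norm: $\|w\|_{H^{s+1}_N(\Omega)}^2 \leq C\bigl[(\int_\Omega w\,dx)^2 + \|(-\Delta)^{(s+1)/2} w\|_{L^2(\Omega)}^2\bigr]$. Since $\Omega$ is bounded, Hölder (when $b\geq 1$) or Jensen applied to the constant function (when $b<1$) bounds $|\int_\Omega w|$ by a multiple of $\|w\|_{L^b(\Omega)}$, thereby producing a full $H^{s+1}$ bound at the price of an additive $\|w\|_{L^b}$ correction.

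Next, I would apply the Sobolev embedding $H^{s+1}(\Omega)\hookrightarrow L^q(\Omega)$ from Proposition~2.2, with $q = \frac{2N}{N-2(s+1)}$ when $s+1 < N/2$, any $q<\infty$ when $s+1 = N/2$, or $q=\infty$ when $s+1 > N/2$. Combining this with the Hölder interpolation $\|w\|_{L^2(\Omega)}\leq \|w\|_{L^q(\Omega)}^{\theta}\|w\|_{L^b(\Omega)}^{1-\theta}$ where $\theta$ satisfies $\tfrac{1}{2} = \tfrac{\theta}{q}+\tfrac{1-\theta}{b}$, a direct algebraic manipulation using $\tfrac{1}{q}=\tfrac{1}{2}-\tfrac{s+1}{N}$ yields exactly $\theta = \frac{1/b - 1/2}{1/b + (s+1)/N - 1/2}$, matching the stated exponent.

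Putting the two steps together, we get $\|w\|_{L^2}\leq C\|w\|_{H^{s+1}}^{\theta}\|w\|_{L^b}^{1-\theta}\leq C\bigl(\|(-\Delta)^{(s+1)/2}w\|_{L^2} + \|w\|_{L^b}\bigr)^\theta \|w\|_{L^b}^{1-\theta}$, and the subadditivity $(a+b)^\theta \leq a^\theta + b^\theta$ for $\theta\in [0,1)$ splits this into the claimed $C_1\|(-\Delta)^{(s+1)/2}w\|_{L^2}^{\theta}\|w\|_{L^b}^{1-\theta} + C_2 \|w\|_{L^b}$. When $\Omega$ is unbounded or $w$ has compact support, the homogeneous Sobolev embedding $\dot H^{s+1}\hookrightarrow L^q$ holds scale-invariantly without any additive correction, so $C_2 = 0$. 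The main technical difficulty will be the borderline case $s+1 = N/2$, where the embedding loses the $L^\infty$ endpoint and one must pick a sufficiently large but finite $q$ and verify by a limiting argument that the resulting $\theta$ still matches the formula; a secondary subtlety is the regime $b\in (0,1)$ in which $L^b$ is merely quasinormed, requiring care in applying Hölder and Jensen, but neither issue affects the scaling exponent $\theta$, which is fixed purely by dimensional analysis.
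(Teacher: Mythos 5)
Your plan correctly captures the shape of such a proof, and the scaling computation that produces $\theta$ is right, but as written it has two genuine gaps, both in regimes the paper actually uses.

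First, the step passing from the seminorm to the full norm via $\abs{\int_\Omega w}\leq C\norm{w}_{L^b(\Omega)}$ is false when $b<1$. Jensen/H\"older run the wrong way there: $t\mapsto t^b$ is concave for $b<1$, which yields $\norm{w}_{L^b(\Omega)}\leq C\norm{w}_{L^1(\Omega)}$, not the reverse (take $w=\mathbbm{1}_E$ with $\abs{E}=t\to 0$: then $\norm{w}_{L^1}/\norm{w}_{L^b}=t^{1-1/b}\to\infty$). Since the lemma is invoked in the paper with $b=\tfrac{2(\alpha-n+2)}{\alpha+2}$ and $b=\tfrac{2}{\alpha+2}$, both of which can be $<1$, this is not a corner case. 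Repairing it requires either Nirenberg's truncation/iteration device for sub-unit exponents, or first interpolating $\norm{w}_{L^1}\leq\norm{w}_{L^b}^{1-\sigma}\norm{w}_{L^2}^\sigma$ and absorbing the resulting $\norm{w}_{L^2}$ factor — neither of which is in your write-up.

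Second, and more seriously for the ranges relevant here: your exponent derivation uses $\tfrac1q=\tfrac12-\tfrac{s+1}{N}$, which only defines a genuine Lebesgue index when $s+1<N/2$. For $N=d\in\{1,2\}$ and any $s\in(0,1)$ (and for $d=3,\ s>1/2$) one has $s+1>N/2$, so this $q$ is negative. Your fallback ``use $q=\infty$'' then produces $\theta_\infty=1-\tfrac b2$ from $\tfrac12=\tfrac{1-\theta_\infty}{b}$, which is strictly \emph{larger} than the claimed $\theta=\tfrac{1/b-1/2}{1/b+(s+1)/N-1/2}$, and the two inequalities are not interderivable. In particular the $C_2=0$ case (compact support) with exponent $\theta_\infty$ fails the dimensional/scaling check you yourself invoke, so the $L^\infty$ route cannot give the stated estimate. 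Establishing the scale-invariant $\theta$ when $s+1>N/2$ needs a different mechanism (e.g.\ a frequency-splitting argument on $\mathbb R^N$ plus extension, or a truncation argument), not Sobolev embedding followed by H\"older.
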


\begin{lemma}[Classical Stampacchia's lemma] $($see \cite[Lemme 4.1,p.14]{S1}$)$\label{L-1}
Let $f(x)  $ be non-negative, non-increasing in $[x_0,+\infty)$ function. Assume that $f$ satisfies
\begin{equation}\label{s-1}
 f(y) \leqslant \frac{C}{(y -x)^{\alpha}} f^{\beta}(x)  \text{ for } y > x \geqslant x_0,
\end{equation}
where $C,\,\alpha,\, \beta$ are some positive constants. Then

(i) if  $\beta > 1$ we have
$$
f(y) = 0  \text{ for all }  y \geqslant x_0 + d,
$$
where $d^{\alpha} = C f^{\beta -1}(x_0) 2^{\frac{\alpha\beta}{\beta -1}}$;

(ii) if  $\beta = 1$ we get
$$
f(y) \leqslant e^{1- \zeta(y -x_0)}f(x_0) \text{ for all }  y \geqslant x_0,
$$
where $ \zeta = (e\,C)^{- \frac{1}{\alpha}}$;

(iii) if  $\beta < 1$ we obtain
$$
f(y) \leqslant  2^{\frac{\mu}{1-\beta}} \bigl[C^{\frac{1}{1-\beta}} + (2\, x_0)^{\mu} f(x_0) \bigr] y^{-\mu} \text{ for all }  y \geqslant x_0 > 0,
$$
where $ \mu =  \frac{\alpha }{1- \beta}$.
\end{lemma}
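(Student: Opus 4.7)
The plan is to prove all three cases of the Classical Stampacchia Lemma by variants of an iteration argument on a geometric sequence $\{x_n\} \subset [x_0,+\infty)$; the exponents of decay are uniquely determined by homogeneity, and the role of~\eqref{s-1} is to fix the admissible multiplicative constants.

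For case (i) with $\beta > 1$, I would set $x_n := x_0 + d(1 - 2^{-n})$, so that $x_n \nearrow x_0 + d$ and $x_{n+1} - x_n = d\cdot 2^{-(n+1)}$, and prove by induction the geometric decay $f(x_n) \leq f(x_0)\,\rho^n$ for some $\rho \in (0,1)$. Substituting this ansatz into~\eqref{s-1} with $x = x_n$, $y = x_{n+1}$ produces a factor $(2^{\alpha}\rho^{\beta-1})^n$, which forces $\rho = 2^{-\alpha/(\beta-1)}$ in order to close the induction, and from the constant prefactor it forces $d^\alpha = C f^{\beta-1}(x_0)\, 2^{\alpha\beta/(\beta-1)}$ — exactly the value in the statement. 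Since $f \geq 0$ and $\rho < 1$, passing to $n\to\infty$ together with the monotonicity of $f$ yields $f(y) = 0$ for every $y \geq x_0 + d$.

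For case (ii) with $\beta = 1$, the hypothesis reduces to $f(y) \leq C(y-x)^{-\alpha} f(x)$, and I would iterate on a uniform grid $x_n = x_0 + nh$ to get $f(x_0 + nh) \leq (Ch^{-\alpha})^n f(x_0)$. Optimising the rate suggests choosing $h$ so that $Ch^{-\alpha} = e^{-1}$, i.e.\ $h = (eC)^{1/\alpha}$, producing pure exponential decay $f(x_0 + nh) \leq e^{-n}f(x_0)$ with rate $\zeta := h^{-1} = (eC)^{-1/\alpha}$; the monotonicity of $f$ fills in the non-integer points, and the bookkeeping of this interpolation produces exactly the constant $e^{1}$ appearing in $f(y) \leq e^{1 - \zeta(y-x_0)} f(x_0)$. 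For case (iii) with $\beta < 1$ the previous iteration no longer contracts, so I would exploit homogeneity by a dilation argument: plugging $y = 2x$ into~\eqref{s-1} gives $f(2x) \leq Cx^{-\alpha} f^\beta(x)$, and introducing $g(x):= x^\mu f(x)$ with the self-similar exponent $\mu := \alpha/(1-\beta)$ yields the scale-invariant recursion $g(2x) \leq 2^\mu C\, g^\beta(x)$. Iterating along the dyadic scale $x_k = 2^k x_0$ and using $\sum_{j\geq 0}\beta^j = 1/(1-\beta)$ gives uniform bounds on $g(x_k)$, and monotonicity of $f$ interpolates to all $y \geq x_0$, producing the claimed polynomial bound $f(y) \leq 2^{\mu/(1-\beta)}\bigl[C^{1/(1-\beta)} + (2x_0)^\mu f(x_0)\bigr] y^{-\mu}$.

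The principal obstacle is not the qualitative decay — which is essentially forced by scaling — but the \emph{precise} numerical constants: verifying that the induction closes with the stated values of $d$, $\zeta$, and the prefactor in~(iii) requires careful tracking of the geometric sums and of the monotone interpolation between gridpoints. In particular, in case~(iii) one must check that the contribution of the $\beta^n$-exponentiated initial data $g^{\beta^n}(x_0)$ remains bounded uniformly in $n$, which is where the structure of the exponent $\mu = \alpha/(1-\beta)$ is essential; and one must collect the geometric sum $\sum \beta^j$ to extract the exponent $1/(1-\beta)$ on the constants, which ultimately yields the factor $2^{\mu/(1-\beta)}$ in the final bound.
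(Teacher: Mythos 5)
The paper does not prove Lemma~\ref{L-1}: it is stated with a citation to Stampacchia's notes \cite{S1} and nothing more, so there is no in-paper proof to compare against. Your sketch is the standard iterative proof of this classical result, and it is essentially sound.

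For case~(i) your bookkeeping is exact: with $x_n=x_0+d(1-2^{-n})$ and the ansatz $f(x_n)\leqslant f(x_0)\rho^n$, the $n$-dependent factor produced by substituting into~\eqref{s-1} is $(2^\alpha\rho^{\beta-1})^n$, so the induction closes iff $\rho=2^{-\alpha/(\beta-1)}$, and the residual constraint on the prefactor gives precisely $d^\alpha = Cf^{\beta-1}(x_0)2^{\alpha\beta/(\beta-1)}$; monotonicity and $x_n\nearrow x_0+d$ then give $f(x_0+d)=0$. Case~(ii) is also exact: with $h=(eC)^{1/\alpha}$ one gets $f(x_0+nh)\leqslant e^{-n}f(x_0)$, and for $y\in[x_0+nh,x_0+(n+1)h)$ monotonicity gives $f(y)\leqslant e^{-n}f(x_0)$ while $n>\zeta(y-x_0)-1$, yielding $f(y)\leqslant e^{1-\zeta(y-x_0)}f(x_0)$ with $\zeta=h^{-1}=(eC)^{-1/\alpha}$.

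For case~(iii), your change of variables $g(x)=x^\mu f(x)$ with $\mu=\alpha/(1-\beta)$ is the correct scale-invariant device and it does give $g(2x)\leqslant 2^\mu C\,g^\beta(x)$ for $x\geqslant x_0$. One small caution: the naive dyadic iteration $g(x_k)\leqslant(2^\mu C)^{\sum_{j<k}\beta^j}g(x_0)^{\beta^k}$, after using $a^{1-\theta}b^\theta\leqslant a+b$ and interpolating with monotonicity between $x_k$ and $x_{k+1}$, gives a bound of the correct form $\mathrm{const}\cdot y^{-\mu}$, but the multiplicative constant one obtains this way does not coincide term-by-term with the one in the statement (it is neither uniformly larger nor uniformly smaller). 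A marginally slicker route — bound $M_K:=\sup_{x_0\leqslant y\leqslant K}g(y)$ by splitting $[x_0,K]$ into $[x_0,2x_0]$ and $(2x_0,K]$ and absorbing, which yields $M_K\leqslant\max\bigl((2^\mu C)^{1/(1-\beta)},(2x_0)^\mu f(x_0)\bigr)$ — actually gives a bound that is \emph{at least as good} as the one stated, so the claimed constant is certainly attainable. Since the precise constant is immaterial for the lemma's use in the paper, this is not a gap; it is just the constant-tracking you yourself flagged as the delicate part.
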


\begin{lemma}[Inhomogeneous Stampacchia's lemma]$($see \cite[Lemma 3.1]{DalPassoGiacomelliGruen}, \cite[Lemma 2.4]{GiacomelliGruen}$)$\label{lem:stampacchia2}
Let $f:[0,R] \to \R$ be a non-negative non-increasing function such that
	\begin{equation}\label{stamglem:h21}
	f(\xi) \leqslant  \frac{c_0}{(\xi-\eta)^\alpha}(f(\eta)+\widetilde{S}\cdot (R-\eta)^{\frac{\alpha}{\beta-1}})^\beta,
	\end{equation}
	where $0 \leqslant  \eta < \xi \leqslant   R$, $\widetilde{S} \geqslant  0$, and $c_0,\alpha,\beta >0$, and $\beta >1$. In addition, let us assume that
	\begin{equation}\label{stamglem:h22} R^{\frac{\alpha}{\beta -1}} \geqslant  \left( 2^{\frac{\beta(\alpha + \beta -1)}{\beta -1 }} c_0\right)^{\frac{1}{\beta-1}}  (f(0) + \widetilde{S} \cdot R^{\frac{\alpha}{\beta -1}}).
	\end{equation}
	Then $$f(R) = 0.$$
\end{lemma}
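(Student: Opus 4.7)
The plan is to run a classical dyadic Stampacchia-type iteration, with an ansatz calibrated so that the inhomogeneous term $\widetilde{S}\cdot(R-\eta)^{\alpha/(\beta-1)}$ decays at precisely the same geometric rate as the iterates $f(\xi_k)$; this synchronization of rates is both the key idea and the reason the exponent $\alpha/(\beta-1)$ appears in the standing hypothesis \eqref{stamglem:h22}.

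First I would introduce the dyadic sequence $\xi_k := R(1-2^{-k})$, so that $\xi_0=0$, $\xi_k \nearrow R$, $\xi_{k+1}-\xi_k = R\,2^{-(k+1)}$ and $R-\xi_k = R\,2^{-k}$. Setting $a_k := f(\xi_k)$, applying \eqref{stamglem:h21} with $\eta=\xi_k$, $\xi=\xi_{k+1}$ yields the recursion
\begin{equation*}
a_{k+1} \;\leqslant\; \frac{c_0\,2^{\alpha(k+1)}}{R^{\alpha}}\Bigl(a_k + \widetilde{S}\,R^{\alpha/(\beta-1)}\,2^{-k\alpha/(\beta-1)}\Bigr)^{\beta}.
\end{equation*}

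Next I would prove by induction that, with the choice $A := f(0) + \widetilde{S}\,R^{\alpha/(\beta-1)}$, one has
\begin{equation*}
a_k \;\leqslant\; A\cdot 2^{-k\alpha/(\beta-1)} \qquad\text{for every } k\geqslant 0.
\end{equation*}
The base case $k=0$ is immediate since $a_0 = f(0)\leqslant A$. For the inductive step, substituting the hypothesis into the recursion and using that $A + \widetilde{S}R^{\alpha/(\beta-1)}\leqslant 2A$, a short computation reduces the desired bound to the condition
\begin{equation*}
A^{\beta-1}\;\leqslant\; \frac{R^{\alpha}}{c_0\cdot 2^{\beta(\alpha+\beta-1)/(\beta-1)}},
\end{equation*}
which, after taking the $(\beta-1)$-th root, is exactly hypothesis \eqref{stamglem:h22}. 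The bookkeeping of exponents is the one place that needs care: the factor $2^{\alpha(k+1)}$ coming from $(\xi_{k+1}-\xi_k)^{-\alpha}$ must be absorbed by $2^{-k\beta\alpha/(\beta-1)}$ coming from the $\beta$-th power of $a_k$, leaving a surplus $2^{\alpha(k+1)-k\beta\alpha/(\beta-1)} = 2^{\alpha - k\alpha/(\beta-1)}$ that matches the target decay rate $2^{-(k+1)\alpha/(\beta-1)}$ up to a universal constant; this is the calculation that dictates the peculiar exponent $\beta(\alpha+\beta-1)/(\beta-1)$ in \eqref{stamglem:h22}.

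Finally, since $\alpha/(\beta-1)>0$, the geometric bound gives $a_k\to 0$. Because $f$ is non-increasing and $\xi_k\leqslant R$, we have $f(R)\leqslant f(\xi_k)=a_k$ for all $k$, and letting $k\to\infty$ yields $f(R)=0$. The main obstacle, as already indicated, is guessing the correct inductive ansatz: once one sees that both the $f$-term and the $\widetilde{S}$-term must be controlled by the same geometric factor $2^{-k\alpha/(\beta-1)}$, the rest reduces to a single algebraic verification that coincides with \eqref{stamglem:h22}.
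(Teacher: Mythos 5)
Your proof is correct. The paper itself does not supply a proof of this lemma (it is cited from Dal Passo--Giacomelli--Gr\"un and from Giacomelli--Gr\"un), so there is nothing in the present source to compare against; your argument is the standard dyadic iteration argument those references use. The bookkeeping all checks out: with $\xi_k = R(1-2^{-k})$ and the ansatz $a_k \leqslant A\,2^{-k\alpha/(\beta-1)}$ with $A := f(0)+\widetilde{S}R^{\alpha/(\beta-1)}$, the inductive step reduces (after the elementary bound $A+\widetilde{S}R^{\alpha/(\beta-1)}\leqslant 2A$, which holds because $f(0)\geqslant 0$) to $A^{\beta-1}\leqslant R^\alpha\big/\big(c_0\,2^{\alpha+\beta+\alpha/(\beta-1)}\big)$, and the exponent simplifies to $\alpha+\beta+\tfrac{\alpha}{\beta-1}=\tfrac{\beta(\alpha+\beta-1)}{\beta-1}$, so the required smallness condition is exactly \eqref{stamglem:h22}. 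The final passage to $f(R)=0$ uses monotonicity of $f$ correctly.
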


\begin{lemma}$($see \cite[Lemma A.7]{DNLST24}$)$\label{lem-fr}
Assume that $\Omega \subset \mathbb{R}^d$ is a bounded domain  and $\Omega(S) := \Omega \setminus \{ x \in \bar{\Omega}:  |x | \leqslant S \}$.
Let $\psi(x) \in H_N^{2\mu}( \Omega )$ be such that $\text{supp}\,\psi(x) \subseteq \Omega(S) $.
Then there exists a constant $C > 0$ depending on $\mu$ such that  the following estimate holds
\begin{equation}\label{fr-1}
\| (-\Delta)^{\mu} \psi  \|_{L^2(\Omega)} \leqslant \| (-\Delta)^{\mu} \psi  \|_{L^2(\Omega(S+\delta))} +  \tfrac{C}{\delta^{2\mu }}\|\psi  \|_{L^2(\Omega(S))} ,
\end{equation}
where $\mu \in (0,1)$.
\end{lemma}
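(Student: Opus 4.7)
The plan is to first apply the triangle inequality on the $L^2$-norm decomposition $\Omega=\Omega(S+\delta)\cup(\Omega\setminus\Omega(S+\delta))$, giving
\[
\|(-\Delta)^\mu\psi\|_{L^2(\Omega)} \leq \|(-\Delta)^\mu\psi\|_{L^2(\Omega(S+\delta))}+\|(-\Delta)^\mu\psi\|_{L^2(\Omega\setminus\Omega(S+\delta))},
\]
so that proving \eqref{fr-1} reduces to the estimate
$\|(-\Delta)^\mu\psi\|_{L^2(\Omega\setminus\Omega(S+\delta))}\leq C\delta^{-2\mu}\|\psi\|_{L^2(\Omega(S))}$. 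To achieve this, I would use the heat-semigroup representation \eqref{eq:semigroup} of $(-\Delta)^\mu$, valid for $\mu\in(0,1)$, and split the time integral at the parabolic scale $t_\star=\delta^2$. This decomposes $(-\Delta)^\mu\psi = I_L + I_S$, where $I_L$ collects the contribution from $t\geq\delta^2$ and $I_S$ from $t\in(0,\delta^2]$.

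The long-time contribution $I_L$ is immediate: by Minkowski's integral inequality together with the $L^2$-contractivity of the heat semigroup,
\[
\|I_L\|_{L^2(\Omega)} \leq \tfrac{2}{|\Gamma(-\mu)|}\,\|\psi\|_{L^2(\Omega)}\int_{\delta^2}^{+\infty}\tfrac{dt}{t^{1+\mu}} = \tfrac{C}{\delta^{2\mu}}\|\psi\|_{L^2(\Omega(S))},
\]
where the last identity uses $\supp\psi\subseteq\Omega(S)$; this already satisfies the required bound (and holds over all of $\Omega$). For the short-time part the crude Minkowski estimate diverges, so one must exploit cancellation. I would use $\int_\Omega K(x,y,t)dy=1$ to write $e^{t\Delta}\psi(x)-\psi(x)=\int_\Omega K(x,y,t)[\psi(y)-\psi(x)]dy$ and the Gaussian bounds \eqref{control_kernel}. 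Decompose the $x$-region $\{|x|\leq S+\delta\}$ into the bulk $\{|x|\leq S-\delta\}$, where $\psi(x)=0$ and $\dist(x,\supp\psi)\geq\delta$, and the thin annular shell $\{S-\delta<|x|\leq S+\delta\}$ of volume $O(\delta)$. On the bulk, carrying out the $t$-integration in $(0,\delta^2]$ of the Gaussian reduces the problem to an integral operator with kernel pointwise bounded by $C|x-y|^{-d-2\mu}\mathbf{1}_{|x-y|\geq\delta}$, whose $L^1$-mass in $y$ equals $\int_{|z|\geq\delta}|z|^{-d-2\mu}dz\sim\delta^{-2\mu}$; Young's convolution inequality then yields the desired bound. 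On the shell, semigroup cancellation combined with the $O(\delta)$ volume and Gaussian localization of the heat kernel produces a matching estimate.

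The main obstacle I anticipate is the short-time estimate on the annular shell $\{S-\delta<|x|\leq S+\delta\}$: here $\psi(x)$ is generally nonzero and $\dist(x,\supp\psi)$ can be arbitrarily small, so kernel decay alone is insufficient and one cannot avoid working directly with $e^{t\Delta}\psi(x)-\psi(x)$. The essential point will be to use the cancellation in $e^{t\Delta}\psi-\psi$ jointly with the support condition $\supp\psi\subseteq\Omega(S)$ to extract the factor $\delta^{-2\mu}$ without introducing any circular dependence on $\|(-\Delta)^\mu\psi\|_{L^2}$ on the right-hand side, which would otherwise render the argument vacuous.
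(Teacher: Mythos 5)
The paper states this lemma by reference to \cite[Lemma A.7]{DNLST24} and does not reproduce a proof, so there is no in-paper argument to compare your proposal against. Your long-time estimate via Minkowski and $L^2$-contractivity, and your short-time bulk estimate via the pointwise kernel bound $\int_0^{\infty} K(x,y,t)\,t^{-1-\mu}\,\d t \lesssim |x-y|^{-d-2\mu}$ combined with Young/Schur on $\{|x-y|\geqslant \delta\}$, are both correct (indeed, on the bulk $\{|x|\leqslant S-\delta\}$ the kernel bound already holds for \emph{all} $t$, so the split at $t_\star=\delta^2$ is not even needed there). The genuine gap is exactly the one you flag: the short-time contribution on the shell $\{S-\delta<|x|\leqslant S+\delta\}$, and the phrase ``semigroup cancellation combined with the $O(\delta)$ volume and Gaussian localization'' is not an argument.

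The difficulty there is structural, not technical. On the portion $\{S<|x|\leqslant S+\delta\}$ the hypothesis $\supp\psi\subseteq\Omega(S)$ allows $\psi$ to be nonzero and to have arbitrarily small modulus of continuity, so in that region $I_S(x)$ \emph{is} essentially the full singular part of $(-\Delta)^\mu\psi(x)$, which is not controlled by $\delta^{-2\mu}\|\psi\|_{L^2}$ with a constant depending only on $\mu$. A smooth bump supported in the open annulus on a scale $\epsilon\ll\delta$ has $\|(-\Delta)^\mu\psi\|_{L^2(\Omega)}\sim\epsilon^{-2\mu}\|\psi\|_{L^2}$ with essentially all of that mass sitting in $\Omega\setminus\Omega(S+\delta)$; its tail on $\Omega(S+\delta)$ is $O(\epsilon^{d}\delta^{-d/2-2\mu}\|\psi\|_{L^2})=o(\delta^{-2\mu}\|\psi\|_{L^2})$ as $\epsilon\to 0$. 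Thus the shell term cannot be absorbed by either right-hand side term, there is no cancellation to exploit inside $\supp\psi$, and the $O(\delta)$ volume is of no help without a pointwise bound on $(-\Delta)^\mu\psi$ (which is unavailable at $L^2$ regularity). The clean route is the one your bulk argument uses, but it requires $\operatorname{dist}(x,\supp\psi)\geqslant\delta$ on the whole leftover region; this holds if one either strengthens the support hypothesis to $\supp\psi\subseteq\Omega(S+\delta)$, or replaces $\Omega(S+\delta)$ by $\Omega(S-\delta)$ in the first right-hand term so that the leftover region is $\{|x|\leqslant S-\delta\}$. Under such an indexing of the annuli your kernel/Young argument closes the proof directly and the shell never appears; I would check the exact statement in \cite{DNLST24} before trying to push the shell estimate any further.
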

 
\begin{lemma}[Stampacchia-type lemma]$($see \cite[Lemma A.3]{DNLST24}$)$\label{lem-st-n}
Let  $f : [0,+\infty) \to [0, +\infty)$ be a non-negative non-increasing function
such that
\begin{equation}\label{nnn-1}
f(s +\delta) \leqslant   \epsilon f^{\nu}(s) \text{ for all } s \in \mathbb{R}^+, \ \delta > 0,
\end{equation}
for  $\epsilon \in (0,f^{1-\nu}(0))$ and $\nu > 1$. Then
$$
f(s) \equiv 0 \text{ for all } s \geqslant  d :=  \frac{f(0)}{1 - \epsilon f^{\nu -1}(0)}.
$$
\end{lemma}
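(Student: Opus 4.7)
The plan is to iterate the hypothesis with an adaptively chosen step size $\delta_k$ that makes the resulting sum telescope to exactly $d$. Since the inequality $f(s+\delta) \leqslant \epsilon f^\nu(s)$ is valid for every $\delta>0$, we are free to choose $\delta$ depending on $s$ at each step of the iteration, and the natural (in fact essentially optimal) choice turns out to be $\delta_k = f(s_k)$.

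First, if $f(0) = 0$, then monotonicity gives $f \equiv 0$ and the statement is trivial; so assume $f(0)>0$. Define recursively $s_0 := 0$ and
\[
s_{k+1} := s_k + f(s_k),
\]
as long as $f(s_k)>0$ (if $f(s_{k_0})=0$ for some $k_0$, monotonicity yields $f \equiv 0$ on $[s_{k_0},+\infty)$ and we are done, since $s_{k_0}\leqslant d$ by the forthcoming bound). Setting $a_k := f(s_k)$ and applying the hypothesis with $s = s_k$, $\delta = f(s_k)>0$, together with the monotonicity of $f$ (which gives $a_k \leqslant a_0 = f(0)$), we find
\[
a_{k+1} = f(s_{k+1}) \leqslant \epsilon\, f^\nu(s_k) = \epsilon\, a_k^{\nu-1}\, a_k \leqslant \epsilon\, f^{\nu-1}(0)\, a_k = c\, a_k,
\]
where $c := \epsilon\, f^{\nu-1}(0) \in (0,1)$ by the assumption $\epsilon < f^{1-\nu}(0)$.

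Iterating yields $a_k \leqslant c^k f(0)$, so $a_k \to 0$ geometrically. Summing the step sizes,
\[
s_\infty := \lim_{k\to\infty} s_k = \sum_{k=0}^{\infty} a_k \leqslant f(0) \sum_{k=0}^{\infty} c^k = \tfrac{f(0)}{1 - \epsilon f^{\nu-1}(0)} = d.
\]
Finally, for any $s > s_\infty$ there exists $k$ with $s_k < s$; monotonicity and $a_k \to 0$ give $f(s) \leqslant f(s_k) = a_k \to 0$, hence $f(s) = 0$. The endpoint $s = s_\infty$ is handled analogously (as $s_k \to s_\infty^-$, $f(s_\infty) \leqslant a_k \to 0$). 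Since $s_\infty \leqslant d$, we conclude $f(s) = 0$ for all $s \geqslant d$.

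There is no real obstacle here beyond identifying the right choice of step size: a naive fixed-$\delta$ iteration only shows $f(s)\to 0$ as $s\to\infty$ without producing the explicit constant $d$. The key insight is that the freedom to take $\delta$ arbitrary allows us to match $\delta_k = f(s_k)$, turning the nonlinear recursion $a_{k+1}\leqslant \epsilon a_k^\nu$ into a geometric one that can be summed explicitly, with the geometric-series sum yielding precisely the closed-form bound $d = f(0)/(1-\epsilon f^{\nu-1}(0))$.
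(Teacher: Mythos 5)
Your proof is correct. The paper does not actually contain a proof of this lemma — it cites it from \cite{DNLST24} (Lemma A.3) — so there is no internal proof to compare against, but your argument stands on its own and proves exactly the stated conclusion with the stated constant $d$.

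The crucial observation is the one you highlight: since \eqref{nnn-1} is assumed for \emph{all} $\delta>0$, you may take $\delta$ adaptively, and the choice $\delta_k=f(s_k)$ is precisely what turns the nonlinear recursion $a_{k+1}\leqslant \epsilon a_k^\nu$ into the geometric bound $a_{k+1}\leqslant c\,a_k$ (after the uniform estimate $a_k^{\nu-1}\leqslant f^{\nu-1}(0)$), whose series sums exactly to $d=f(0)/(1-c)$. The edge cases — $f(0)=0$, termination when some $a_{k_0}=0$, and the endpoint $s=s_\infty$ — are all handled correctly by monotonicity.

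One side remark, not a flaw in your proof but worth noticing: the hypothesis ``for all $\delta>0$'' is in fact so strong that a shorter argument yields a stronger conclusion. Letting $\delta\to 0^+$ in \eqref{nnn-1} gives $f(s^+)\leqslant \epsilon f^{\nu}(s)\leqslant c\,f(s)$ for every $s\geqslant 0$ with $c:=\epsilon f^{\nu-1}(0)<1$; then for any $0<s_1<s_2$ and any $n$, inserting $n$ intermediate points and using monotonicity gives $f(s_2)\leqslant c^{\,n}f(s_1)$, hence $f\equiv 0$ on $(0,+\infty)$. So the explicit threshold $d$ in the lemma's conclusion is not sharp under the literal hypothesis; your proof, however, is the one that produces the stated constant $d$ and is the one that survives if the intended hypothesis is the more standard one with $\delta$ bounded away from zero.
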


\section{Proof of Lemma \ref{lem-GG}}

%\begin{lemma}\label{lem-GG_app}
%Let $ 0 < \mu < 2$ and
%$\Omega \subset \mathbb{R}^d$ be bounded domain with smooth boundary.
%Assume that $ \phi \in C^{2}(\mathbb{R}^1)$. %  is convex.
%For any function $ u \in H_N^{2\mu}( \Omega )$, one has
%
%(i)   if  $\mu \in (0,1)$ then
%\begin{equation}\label{gg-0-01}
%(-\Delta)^{\mu} \phi(u(x)) = \phi'(u(x))(-\Delta)^{\mu} u (x) -  \mathcal{I}_\mu [u](x),
%\end{equation}
%where
%$$
%\mathcal{I}_\mu[u](x) := -  \tfrac{1}{ \Gamma(-\mu)} \int \limits_0^{+\infty} { \Bigl(
%\int \limits_{\Omega} { K(x,y,t) \int \limits_{u(x)}^{u(y)} {  \phi''(z)   (u(y) - z) \,dz} dy}   \Bigr) \tfrac{dt}{t^{1+\mu}}},
%$$
%and $\mathcal{I}_\mu [u](x) \geqslant 0$   if $\phi$   is convex;
%
%(ii) if $\mu = 1$ then
%$$
% (-\Delta)  \phi(u(x)) = \phi'(u(x))(-\Delta) u (x) - \phi''(u(x)) |\nabla u(x)|^2;
%$$
%
%(iii) if $\mu \in (1,2) $ then
%\begin{equation}\label{gg-0-01-000}
%(-\Delta)^{\mu} \phi(u(x)) = \phi'(u(x))(-\Delta)^{\mu} u (x)  -    \mathcal{J}_\mu [u](x),
%\end{equation}
%where
%$$
%\mathcal{J}_{\mu } [u](x) := \mathcal{I}_{\mu } [u](x) - \phi''(u(x)) |\nabla u(x)|^2  \tfrac{1}{ \Gamma(1-\mu)}  \int \limits_0^{+\infty} { \Bigl( \int \limits_{\Omega} {  K(x,y,t) dy}   \Bigr) \tfrac{dt}{t^{\mu}}}.
%$$
%\end{lemma}
%

We start with the proof of \eqref{gg-0-01}.
We recall that  for $\mu \in (0,1)$ we have
$$
(-\Delta)^{\mu} \psi(x) = \tfrac{1}{\Gamma(-\mu)} \int_0^{+\infty} { (e^{t\Delta}\psi(x) - \psi(x) ) \tfrac{dt}{t^{1+\mu}}},
$$
where
$$
e^{t\Delta}\psi(x) = \int_{\Omega} { K(x,y,t) \psi(y)\,dy}, \ \ \int_{\Omega} { K(x,y,t)\,dy} = 1.
$$
We also recall \eqref{control_kernel} which we express as 
$K(x,y,t) \backsimeq C_K t^{-\frac{d}{2}} e^{-\frac{|x-y|^2}{4t}}$ for a suitable constant $C_K$.  
For every $x \in \Omega$, we have
$$
e^{t\Delta}\phi(u(x)) - \phi(u(x)) = \int_{\Omega} { K(x,y,t) (\phi(u(y)) - \phi(u(x)) )\,dy}.
$$
By the Taylor expansion with integral remainder of $\phi(u)$, we can write
$$
\phi(u(y)) - \phi(u(x)) = \phi'(u(x)) \bigl(u(y) - u(x) \bigr) +
 \int_{u(x)}^{u(y)} {  \phi''(z)   (u(y) - z) \,dz}.
$$
Substituting it in the above identity, we find
\[
\begin{split}
 e^{t\Delta}\phi(u(x)) - \phi(u(x)) &= \phi'(u(x)) \int_{\Omega} { K(x,y,t) \bigl(u(y) - u(x) \bigr) \,dy} \\
&\,\,\, +
  \int_{\Omega} { K(x,y,t)  \int_{u(x)}^{u(y)} {  \phi''(z)   (u(y) - z) \,dz}  \,dy}.
  \end{split}
\]
Multiplying both sides of this equation by $ \frac{1}{\Gamma(-\mu)} t^{-1-\mu}$  and integrating in $t$  over $(0,+\infty)$,
we arrive at (\ref{gg-0-01}) with
$$
\mathcal{I}_\mu[u](x) = -  \tfrac{1}{ \Gamma(-\mu)} \int_0^{+\infty} { \Bigl(
\int_{\Omega} { K(x,y,t) \int_{u(x)}^{u(y)} {  \phi''(z)   (u(y) - z) \,dz} dy}   \Bigr) \tfrac{dt}{t^{1+\mu}}} . %\geqslant 0.
$$
%As
%$$
%\int \limits_0^{+\infty} {t^{-1-\mu -\frac{d}{2}} e^{-\frac{|x-y|^2}{4t}} \,dt} = \tfrac{2^{d+2\mu}}{|x-y|^{d+2\mu}} \Gamma(\tfrac{d+2\mu}{2})
%$$
%then
%$$
%\mathcal{I}_\mu [u](x) \simeq C_\mu \int \limits_{\Omega} {  \int \limits_{u(x)}^{u(y)} {\tfrac{\phi''(z)(u(y) - z)}{|x-y|^{d+2\mu}} \,dz} dy},
%\text{ where } C_\mu = -  \tfrac{C_K 2^{d+2\mu}}{ \Gamma(-\mu)}\Gamma(\tfrac{d+2\mu}{2}).
%$$
Finally, we prove \eqref{gg-0-01-000}. 
 If $\mu \in (1,2)$ then
\[
\begin{split}
(-\Delta)^{\mu} \phi(u(x)) &=(-\Delta) \circ (-\Delta)^{\mu -1 }\phi(u(x))    \\
&= (-\Delta) [\phi'(u(x)) (-\Delta)^{\mu -1 }u(x)  - \mathcal{I}_{\mu-1} [u](x)  ] \\ 
 &=
\phi'(u(x)) (-\Delta)^{\mu} u(x) + (-\Delta) \phi'(u(x)) (-\Delta)^{\mu -1 }u(x) -
2 \nabla \phi'(u(x)) \nabla (-\Delta)^{\mu -1 }u(x)\\
&\quad - (-\Delta)\mathcal{I}_{\mu-1} [u](x).
\end{split}
\]
Now, we compute $ (-\Delta)\mathcal{I}_{\mu-1} [u](x) $:
\[
\begin{split}
(-\Delta)\mathcal{I}_{\mu-1} [u](x) & =  - \tfrac{1}{ \Gamma(1-\mu)}(-\Delta)_x \int_0^{+\infty} { \Bigl(
\int_{\Omega} { K(x,y,t) \int_{u(x)}^{u(y)} {  \phi''(z)   (u(y) - z) \,dz} dy}   \Bigr) \tfrac{dt}{t^{\mu}}} \\
&=  
- \tfrac{1}{ \Gamma(1-\mu)} \int_0^{+\infty} { \Bigl(
\int_{\Omega} { (-\Delta)_x (K(x,y,t)) \int_{u(x)}^{u(y)} {  \phi''(z)   (u(y) - z) \,dz} dy}   \Bigr) \tfrac{dt}{t^{\mu}}} \\
&\quad -  \tfrac{1}{ \Gamma(1-\mu)}  \int_0^{+\infty} { \Bigl(
\int_{\Omega} {  K(x,y,t) (-\Delta)_x \int_{u(x)}^{u(y)} {  \phi''(z)   (u(y) - z) \,dz} dy}   \Bigr) \tfrac{dt}{t^{\mu}}} \\
&\quad + \tfrac{2}{ \Gamma(1-\mu)} \int_0^{+\infty} { \Bigl(
\int_{\Omega} { \nabla_x (K(x,y,t)) \nabla_x \int_{u(x)}^{u(y)} {  \phi''(z)   (u(y) - z) \,dz} dy}\Bigr)
 \tfrac{dt}{t^{\mu}}}\\
 &  =: I_1 + I_2 + I_3.
\end{split}
\]
Here,
\[
\begin{split}
I_1 &= 
  \tfrac{1}{ \Gamma(1-\mu)} \int_0^{+\infty} { \left(
\int_{\Omega} {  K_t(x,y,t) \int_{u(x)}^{u(y)} {  \phi''(z)   (u(y) - z) \,dz} dy}   \right) \tfrac{dt}{t^{\mu}}}  \\
&= \tfrac{\mu}{ \Gamma(1-\mu)} \int_0^{+\infty} { \left(
\int_{\Omega} {  K (x,y,t) \int_{u(x)}^{u(y)} {  \phi''(z)   (u(y) - z) \,dz} dy}   \right) \tfrac{dt}{t^{\mu +1}}}  =
\tfrac{-\mu \Gamma(-\mu)}{ \Gamma(1-\mu)} \mathcal{I}_\mu[u](x) = \mathcal{I}_\mu[u](x),
\end{split}
\]
\[
\begin{split}
  I_2 & =  (-\Delta) \phi'(u(x))  \tfrac{1}{ \Gamma(1-\mu)}  \int_0^{+\infty} { \Bigl(
\int_{\Omega} { K(x,y,t) (u(y) - u(x)) dy}   \Bigr) \tfrac{dt}{t^{\mu}}} \\
&\quad + \phi''(u(x)) |\nabla u(x)|^2  \tfrac{1}{ \Gamma(1-\mu)}  \int_0^{+\infty} { \Bigl(
\int_{\Omega} {  K(x,y,t) dy}   \Bigr) \tfrac{dt}{t^{\mu}}}  \\
& = (-\Delta) \phi'(u(x)) (-\Delta)^{\mu -1} u(x) + \phi''(u(x)) |\nabla u(x)|^2  \tfrac{1}{ \Gamma(1-\mu)}  \int_0^{+\infty} { \Bigl(
\int_{\Omega} {  K(x,y,t) dy}   \Bigr) \tfrac{dt}{t^{\mu}}},
\end{split}
\]
\[
\begin{split}
  I_3 & =  - \nabla \phi'(u(x))  \tfrac{2}{ \Gamma(1-\mu)}  \int_0^{+\infty} { \Bigl(
\int_{\Omega} { \nabla_x( K(x,y,t)) (u(y) - u(x)) dy}   \Bigr) \tfrac{dt}{t^{\mu}}}\\
& = 
- 2 \nabla \phi'(u(x)) \nabla (-\Delta)^{\mu -1} u(x)  - \phi''(u(x)) |\nabla u(x)|^2  \tfrac{2}{ \Gamma(1-\mu)}  \int_0^{+\infty} { \Bigl(
\int_{\Omega} {  K(x,y,t) dy}   \Bigr) \tfrac{dt}{t^{\mu}}} .
\end{split}
\]
Using these equalities, we have
\[
\begin{split}
(-\Delta)\mathcal{I}_{\mu-1} [u](x)& = \mathcal{I}_\mu[u](x)  + (-\Delta) \phi'(u(x)) (-\Delta)^{\mu -1} u(x)  -
2 \nabla \phi'(u(x)) \nabla (-\Delta)^{\mu -1} u(x)     \\
&\quad -\phi''(u(x)) |\nabla u(x)|^2  \tfrac{1}{ \Gamma(1-\mu)}  \int_0^{+\infty} { \Bigl( \int_{\Omega} {  K(x,y,t) dy}   \Bigr) \tfrac{dt}{t^{\mu}}}.
\end{split}
\]
As a result, we arrive at
$$
(-\Delta)^{\mu} \phi(u(x)) =  \phi'(u(x)) (-\Delta)^{\mu} u(x)   -  \mathcal{J}_{\mu } [u](x),
$$
where
$$
\mathcal{J}_{\mu } [u](x) := \mathcal{I}_{\mu } [u](x) - \phi''(u(x)) |\nabla u(x)|^2  \tfrac{1}{ \Gamma(1-\mu)}  \int_0^{+\infty} { \Bigl( \int_{\Omega} {  K(x,y,t) dy}   \Bigr) \tfrac{dt}{t^{\mu}}}.
$$

\vspace{5mm}

\bibliographystyle{abbrv}
\bibliography{FTFE-refs_entropy.bib}
\vfill

%\bibliographystyle{plain}
%\begin{thebibliography}{1}
%
%\bibitem{DPGG98SIAM}
%Roberta {Dal Passo}, Harald Garcke, and G{\"u}nther Gr{\"u}n.
%\newblock On a fourth-order degenerate parabolic equation: Global entropy
%  estimates, existence, and qualitative behavior of solutions.
%\newblock {\em SIAM J. Math. Anal.}, 29(2):321--342, 1998.
%
%\bibitem{DNLST24}
%Lisini S. Segatti~A. De~Nitti, N. and Taranets R.
%\newblock Existence and finite speed of propagation of solutions for a
%  multidimensional fractional thin-film equation.
%\newblock {\em preprint}, arXiv:2404.03633, 2024.
%
%%\bibitem{Chou08}
%%Kai-Seng Chou,  Shi-Zhong Du.
%%\newblock Estimates on the Hausdorff dimension of the rupture set of a thin film.
%%\newblock {\em SIAM J. Math. Anal.}, 40(2):790--823, 2008.
%
%\bibitem{Dong19}
%Dong Li.
%\newblock On Kato-Ponce and fractional Leibniz.
%\newblock {\em Rev. Mat. Iberoam.}, 35(1):23--100, 2019.
%
%
%\bibitem{GiacomelliGruen}
%L. Giacomelli and G. Gr\"{u}n.
%\newblock Lower bounds on waiting times for degenerate parabolic equations and systems.
%\newblock {\em Interfaces Free Bound.}, 8:111--129, 2006.
%
%\end{thebibliography}

\end{document}